\newtheorem{case}{Case}
\newtheorem{subcase}{Subcase}[case]
\newtheorem{subsubcase}{Subcase}[subcase]
\def\qed{\hfill
\ifhmode\unskip\nobreak\fi\quad\ifmmode\Box\else$\Box$\fi\\ }
\newtheorem{theorem}{Theorem}
\newtheorem{lemma}[theorem]{Lemma}
\newtheorem*{claim}{Claim}
\numberwithin{equation}{section}
\numberwithin{figure}{section}
\newtheorem{thm}[theorem]{Theorem}
\newtheorem*{thm*}{Theorem}
\newtheorem*{conj*}{Conjecture}
    \providecommand{\questionname}{Question}
\newcommand{\sU}{\mathcal{U}}
\newcommand{\obeta}{\bar{\beta}}
\newcommand{\oalpha}{\bar{\alpha}}
\newcommand{\kchi}{\chi'_{k\text{-int}}}
\newcommand{\tchi}{\chi'_{2\text{-int}}}
\begin{document}



\title{$k$-intersection edge-coloring subcubic planar multigraphs}

\author{M. Santana}
\thanks{Department of Mathematics, University of Illinois, Urbana, IL, 61801,
USA. This author's research is supported  by the NSF grant DMS-1266016 ``AGEP-GRS''.\\
E-mail address: \texttt{santana@illinois.edu.}}

\begin{abstract}
Given an edge-coloring of a simple graph, assign to every vertex $v$ a set $S_v$ comprised of the colors used on the edges incident to $v$.  The $k$-intersection chromatic index of a graph is the minimum $t$ such that the edge set can be properly $t$-colored, additionally requiring that for every two adjacent vertices $u$ and $v$, $|S_u \cap S_v| \le k$.  For all $k \neq 2$, this value is known for subcubic planar graphs, and furthermore, these values are best possible.  We naturally extend this definition to multigraphs with bounded edge multiplicity, and we show that every subcubic planar multigraph with edge multiplicity at most two has 2-intersection chromatic index at most 5, which is sharp.
\end{abstract}

\maketitle
\noindent{\small{Mathematics Subject Classification: 05C15 (05C10)}}{\small \par}

\noindent{\small{Keywords: $k$-intersection edge-coloring, subcubic graphs,  planar graphs.}}{\small \par}


\section{Introduction}

All multigraphs in this paper are loopless.  A \emph{proper edge-coloring} of a multigraph is an edge-coloring in which the edges of each color class form a matching in the original multigraph.  A \emph{strong edge-coloring}, introduced by Foquet and Jolivet (see \cite{FJ1, FJ2}),  is a proper edge-coloring in which we require the edges of each color class form an induced matching in the original multigraph.  The (\emph{strong}) \emph{chromatic index} of a multigraph $G$ is the minimum $t$ for which $G$ has a (strong) edge-coloring using $t$ colors.  

Given an edge-coloring of a simple graph, assign to every vertex $v$ a set $S_v$ comprised of the colors used on the edges incident to $v$.  For a fixed positive integer $k$, a $k$\emph{-intersection edge-coloring} is a proper edge-coloring in which $|S_u \cap S_v| \le k$ for all adjacent vertices $u$ and $v$.  The $k$\emph{-intersection chromatic index} of a simple graph $G$, denoted by $\kchi(G)$, is the minimum $t$ for which $G$ has a $k$-intersection edge-coloring using $t$ colors.

The notion of $k$-intersection edge-colorings was introduced in 2002 by Muthu, Narayanan, and Subramanian \cite{MNS} and was defined as above for simple graphs.  This same definition extends to loopless multigraphs in the natural way, however we require the edge multiplicity of the multigraph to be at most $k$, as otherwise it is not well-defined.  In particular, a 1-intersection edge-coloring exists only for simple graphs.

For simple graphs, when $k$ is at least the maximum degree, a $k$-intersection edge-coloring is equivalent to a proper edge-coloring, and furthermore, a 1-intersection edge-coloring is equivalent to strong edge-coloring.  Thus, the concept of $k$-intersection edge-coloring provides a sequence of parameters which join the notions of proper edge-colorings and strong edge-colorings.

Recently, Borozan et al \cite{Betal} show that computing $\kchi(G)$ is NP-complete for every $k \ge 1$.  They also compute bounds on $\kchi(G)$ for various families of graphs.  In particular, they show that every \emph{subcubic} graph $G$ (i.e., $G$ has maximum degree at most three) has $\tchi(G) \le 6$.  However, it is unknown whether or not this is best possible.  In this paper, we will restrict ourselves to subcubic planar graphs.

By Vizing's Theorem, every subcubic planar graph $G$  has $\kchi(G) \le 4$ for $k \ge 3$, and this is best possible.  Recently, Kostochka et al \cite{KLRSWY} show that the strong chromatic index of subcubic planar multigraphs is at most 9.  This is best possible and verifies a conjecture of Faudree et al \cite{FGST}.  As a corollary, every subcubic planar graph $G$ has $\chi'_{1\text{-int}}(G) \le 9$.   The aim of this paper is to complete the spectrum of $k$-intersection edge-colorings for subcubic planar graphs, by proving the following stronger statement.

\begin{thm}\label{thm:conj}
Every subcubic planar multigraph $G$ with edge multiplicity at most two has $\tchi(G) \le 5$.
\end{thm}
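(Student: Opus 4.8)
The plan is to first recast the coloring requirement in a form adapted to exactly five colors. Observe that if $uv$ is an edge and at least one endpoint, say $v$, has degree at most two, then $|S_u \cap S_v| \le |S_v| \le 2$ holds automatically; thus the $2$-intersection condition is nontrivial \emph{only} on edges joining two degree-three vertices. For such an edge both palettes $S_u$ and $S_v$ are $3$-element subsets of the five colors, so $|S_u \cap S_v| \le 2$ is equivalent to $S_u \ne S_v$; dually, writing $\bar S_v$ for the pair of colors \emph{missing} at $v$, the condition reads $\bar S_u \ne \bar S_v$. Crucially this is uniform in the multigraph setting: if $uv$ is one of two parallel edges, those two edges already place two common colors in $S_u \cap S_v$, and $|S_u \cap S_v| \le 2$ again says precisely that the remaining (third) colors differ, i.e.\ $S_u \ne S_v$. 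Hence Theorem~\ref{thm:conj} is equivalent to asserting that $G$ admits a proper $5$-edge-coloring in which any two adjacent degree-three vertices receive distinct palettes, and I would work with this reformulation throughout.

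Next I would argue by contradiction, fixing a counterexample $G$ minimizing $|V(G)| + |E(G)|$ together with a plane embedding; minimality forces $G$ to be connected. The reducibility phase is a catalogue of local configurations that cannot occur in $G$. The simplest is a vertex of degree at most one: deleting such a vertex $v$ and extending a coloring of $G-v$ succeeds because the single edge at $v$ admits five colors, of which properness at its neighbor forbids at most two, while the requirement that the neighbor's newly completed palette avoid those of its (at most two) adjacent degree-three vertices forbids at most two more, leaving a valid choice. Every subsequent reduction reduces to this same bookkeeping: delete or suppress a bounded piece of $G$, color the remainder by minimality, and re-extend, where each uncolored edge has five options, properness excludes a few, and each adjacent big vertex excludes at most one palette. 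The substantive configurations cluster around degree-two vertices, short faces, and digons, and the point of each lemma is to show that a configuration that is sufficiently lean in degree-three vertices is always extendable.

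With the reducible configurations established I would run a discharging argument. Assign each vertex $v$ the charge $\deg(v) - 4$ and each face $f$ the charge $\ell(f) - 4$, where $\ell(f)$ denotes the length of $f$; by Euler's formula the total charge is $-8$. Since $G$ is subcubic, every vertex starts with charge at most $-1$ and every triangular face with charge $-1$, so all positive charge resides on faces of length at least five, and the rules must funnel this charge to the deficient vertices and short faces. The reducibility lemmas guarantee that low-degree vertices, triangles, quadrilaterals, and digons are sparse and mutually well-separated, which bounds the demand any single large face must meet; verifying that after discharging every vertex and every face has nonnegative charge then contradicts the total of $-8$.

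I expect the main obstacle to lie in the reducibility analysis rather than the discharging. Because a palette is coupled to those of up to three neighbors, deleting an edge to create slack can perturb several palettes at once, and the extension must be organized so these constraints never overdetermine a single uncolored edge; choosing which piece to suppress, and in what order to recolor, is the delicate part. Digons between two degree-three vertices are the worst case: the two parallel edges already fill two of the three common palette positions, so there is essentially no room to maneuver, and such configurations will require their own tailored reductions and the longest case analysis. Engineering the suppression operations so that the resulting structural restrictions are exactly strong enough for the planar discharging to close is where the real work concentrates.
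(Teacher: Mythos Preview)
Your high-level strategy---minimal counterexample, reducibility catalogue, planar discharging---is exactly the paper's, and your palette reformulation ($|S_u\cap S_v|\le 2 \iff S_u\ne S_v$ for adjacent $3$-vertices) is a clean way to phrase the constraint. But you have mislocated the difficulty, and as a result your plan does not name the lemmas that actually make the discharging close.

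First, digons are not the worst case; they are the easiest. The paper's very first reducibility lemma shows a minimal counterexample is \emph{simple}: given parallel edges $e_1,e_2$ between $u$ and $v$, one contracts (or deletes and reroutes) and extends in two lines. After that, triangles and $4$-cycles are also eliminated outright, so the minimal counterexample has girth at least five. Your discharging plan, built around ``triangles, quadrilaterals, and digons being sparse and well-separated,'' is aiming at configurations that simply do not survive.

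Second, your initial-charge choice $\deg(v)-4$, $\ell(f)-4$ is awkward for subcubic graphs: every $3$-vertex starts at $-1$, so you must pump charge to all of them. The paper uses $2\deg(v)-6$ and $\ell(f)-6$, which makes $3$-vertices neutral and isolates the deficit on $2$-vertices (charge $-2$) and $5$-faces (charge $-1$). Either scheme can be made to work, but with girth $\ge 5$ the entire argument comes down to feeding the $5$-faces, and that requires two substantial lemmas you have not anticipated: (i) no two $5$-faces share an edge, and (ii) no $5$-face shares an edge with a $6$-face. These are the heart of the proof; each is established by deleting the union of the two faces, inserting gadget vertices so the smaller graph is still subcubic planar with multiplicity $\le 2$, coloring by minimality, and then extending across the removed cycle by a long case analysis (relegated to an appendix in the paper). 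A companion sequence of lemmas shows every $5$-, $6$-, and $7$-face is free of $2$-vertices and that $2$-vertices on a common face are at boundary-distance $\ge 5$. Only with all of this does the discharging (each $2$-vertex takes $1$ from each incident face; each $5$-face takes $1/5$ from each adjacent face) balance. Your outline, as written, would stall at the $5$-face step.
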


This is best possible by considering the complete graph on four vertices.  As mentioned previously,  2-intersection edge-coloring  a multigraph with edge multiplicity at least three is not well-defined.

The structure and proof of Theorem \ref{thm:conj} will follow very closely to that of Kostochka et al  \cite{KLRSWY}.  In Section \ref{sec:prelim}, we provide the notation we will use.  The remaining sections assume the existence of a minimal counterexample.  Section \ref{struct1} contains basic properties of a minimal counterexample, including the fact that it has no cycles of length three or four.  Additionally, if two vertices of degree 2 exist on a face, then the distance between them on the boundary is at least five.  The lemmas in Section \ref{struct2} will show that if a face has a vertex of degree 2 on its boundary, then the face has length at least eight.  Section \ref{struct3} contains two lemmas showing that every face of length five is surrounded by faces of length at least seven.  The proofs of these two lemmas are detailed and involve many pages of case analysis.  For the sake of brevity, these details can be found in the Appendix.  Lastly, Section \ref{sec:proof} contains a discharging proof based on the lemmas presented in Sections \ref{struct1}, \ref{struct2} and \ref{struct3}.


\section{Preliminaries and notation}\label{sec:prelim}

In the proof of Theorem \ref{thm:conj}, we will often remove vertices or edges from a minimal counterexample and obtain a 2-intersection edge-coloring of the remaining multigraph.  To aid us, we introduce some notation that we will use in our explanations.  As mentioned, much of this notation is also found in \cite{KLRSWY}.

Lower case Greek letters, such as $\alpha,\beta, \gamma, \delta$, will be used to denote arbitrary colors, and we use $\phi, \sigma, \psi$ to denote colorings.  Also an $i$\emph{-vertex} is a vertex of degree $i$ in our multigraph, and a $j$\emph{-face} is a face of length $j$ in our plane multigraph.  An $i^+$\emph{-vertex} and $j^+$\emph{-face} is a vertex of degree at least $i$ and a face of length at least $j$, respectively.

A coloring of a multigraph $G$ is \emph{good}, if it is a 2-intersection edge-coloring of $G$ using at most 5 colors.  
A \emph{partial} coloring of a multigraph $G$ is a coloring of any subset of $E(G)$.  Let $\phi$ be a partial coloring of a multigraph $G$.  For $v \in V(G)$, let $\sU_\phi(v)$ denote the set of colors used on the edges incident to $v$.  In partcular,  if no edges incident to $v$ are colored by $\phi$, then $\sU_\phi(v) = \emptyset$.   We say that a partial coloring $\phi$ is a \emph{good partial coloring} of $G$, if for any two adjacent vertices $v_1$ and $v_2$ in $G$, $|\sU_\phi(v_1) \cap \sU_\phi(v_2)| \le 2$.   At times we will refer to only one partial coloring which will not be named.  In these cases we will suppress the subscripts in the above notation.

Suppose $x_0x_1 \dots x_{k-1}$ is a cycle of length $k$ whose edges are all uncolored, and let $\alpha_0, \alpha_1$, $\dots$, $\alpha_{k-1}$ be colors.  We will say that we \emph{color the cycle in order with} $\alpha_0, \alpha_1, \dots, \alpha_{k-1}$, when we color $x_ix_{i+1}$ with $\alpha_i$, where $i$ is taken modulo $k$.


\section{Basic Properties}\label{struct1}

Everywhere below we assume $G$ to be a subcubic planar multigraph with edge multiplicity at most two contradicting Theorem \ref{thm:conj}.  Among all such counterexamples, we assume that $G$ has fewest vertices, and over all such counterexamples, has fewest edges.  $G$ is connected, as otherwise we can color each component by the minimality of $G$, and so obtain a good coloring of $G$.  As $G$ is planar, we assume $G$ to be a \emph{plane} multigraph in all the following statements.  That is, we consider $G$ together with an embedding of $G$ into the plane.

In this section, we will show several properties of $G$, including that $G$ is simple, has no small cycles and the distance between any two 2-vertices is at least four in $G$.  We will end this section by showing that 2-vertices are on the boundary of the same face, the distance between them on the boudary is at least five.

\begin{lemma}\label{muledges}
$G$ has no multiple edges, i.e., $G$ is a simple graph.
\end{lemma}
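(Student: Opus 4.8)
The plan is to argue by contradiction using the minimality of $G$. Suppose $G$ contains a pair of parallel edges $e_1, e_2$ joining vertices $u$ and $v$; since the edge multiplicity is at most two, these are exactly the edges between $u$ and $v$. The natural strategy is to delete one of these edges, say $e_2$, obtaining a smaller multigraph $G' = G - e_2$. Because $G$ has fewest edges among minimal counterexamples, $G'$ admits a good coloring $\phi$, i.e. a 2-intersection edge-coloring with at most 5 colors. I would then attempt to extend $\phi$ to a good coloring of $G$ by assigning a color to $e_2$, deriving a contradiction with the assumption that $G$ is a counterexample.

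First I would set up the constraints on the color available for $e_2$. Since $G$ is subcubic, each of $u$ and $v$ has degree at most three, so after removing $e_2$ at most two other edges are incident to each of $u$ and $v$ in $G'$. For the proper-coloring condition, the color on $e_2$ must avoid the colors already used on edges incident to $u$ or to $v$; with 5 colors available and at most two forbidden colors at each endpoint (note $e_1$, which keeps $u$ and $v$ adjacent, contributes a common forbidden color), this typically leaves enough room. The more delicate condition is the 2-intersection requirement: for every edge from $u$ to a neighbor $w$, and likewise from $v$, the sets $\sU(u)$ and $\sU(v)$ and their intersections with neighboring sets must stay bounded by $2$. The key observation I would exploit is that $u$ and $v$ are joined by two parallel edges $e_1, e_2$, so any proper coloring already forces two distinct colors on $e_1, e_2$ into both $\sU(u)$ and $\sU(v)$; this pins down much of the intersection structure and limits how many genuinely free colors each vertex has.

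The main obstacle, which I expect to require careful case analysis, is controlling the intersection condition $|\sU(u) \cap \sU_\phi(w)| \le 2$ for neighbors $w$ of $u$ (and symmetrically for $v$) when we add a new color to $\sU(u)$. Adding a color $\gamma$ to $e_2$ enlarges $\sU(u)$, and this could push some intersection $|\sU(u) \cap \sU(w)|$ from $2$ up to $3$. I would handle this by counting carefully: for each neighbor $w$ of $u$ other than $v$, the forbidden colors for $e_2$ are those that already appear in $\sU(u) \cap \sU(w)$ at its maximum, and I would bound the total number of such forbidden colors across the at most one other neighbor of $u$ and the at most one other neighbor of $v$. Combining the proper-coloring forbidden colors with these intersection-forbidden colors, I would show the total count is at most $4$, so at least one of the $5$ colors remains available for $e_2$. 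If a naive count is not immediately sufficient, I would split into subcases according to the degrees of $u$ and $v$ (for instance $u$ or $v$ being a $2$-vertex, in which case far fewer constraints apply) and use the flexibility of recoloring $e_1$ as well to free up an additional color. This yields a good coloring of $G$, contradicting that $G$ is a counterexample, and hence $G$ must be simple.
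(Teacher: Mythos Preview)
Your approach of deleting only $e_2$ and extending a good coloring of $G' = G - e_2$ runs into an obstruction that neither your counting argument nor recoloring $e_1$ can resolve. The two parallel edges $e_1, e_2$ always contribute two distinct colors to both $\sU(u)$ and $\sU(v)$, so $|\sU(u)\cap\sU(v)|\ge 2$ automatically. Now if $u$ and $v$ each have a third neighbor $u', v'$ and the good coloring $\phi$ of $G'$ gives $\phi(uu') = \phi(vv')$ (nothing in $G'$ prevents this when $u'\ne v'$), that common color is a third element of $\sU(u)\cap\sU(v)$, forcing the intersection to size $3$ regardless of which two colors are placed on $e_1, e_2$. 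Even when $\phi(uu')\ne\phi(vv')$, your claimed bound of $4$ forbidden colors for $e_2$ is false: with $\phi(e_1)=1$, $\phi(uu')=2$, $\phi(vv')=3$, $\sU_\phi(u')=\{1,2,4\}$, $\sU_\phi(v')=\{1,3,5\}$, every one of the five colors is excluded.

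The paper sidesteps this by a reduction that forces the colors on $uu'$ and $vv'$ apart. If $u$ and $v$ share their (unique possible) third neighbor $w$, it deletes both $e_1$ and $e_2$; then $uw$ and $vw$ are adjacent in the smaller graph and receive distinct colors, and one finishes by assigning $e_1,e_2$ two colors outside $\sU(w)$. Otherwise some third neighbor $u'$ of $u$ lies outside $N_G(v)$, and the paper deletes the vertex $u$ entirely and adds a new edge $u'v$; transferring the color on $u'v$ to $uu'$ makes $\phi(uu')\ne\phi(vv')$ by properness at $v$, after which a short case check on $\sU_\phi(u')$ and $\sU_\phi(v')$ completes the extension. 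The point is that merely removing an edge gives you no leverage over how the inherited coloring relates $uu'$ to $vv'$; you need a reduction that builds that relation in.
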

\begin{proof}
Suppose that $e_1$ and $e_2$ are parallel edges in $G$ with endpoints, $u$ and $v$.  Suppose first that $u$ and $v$ have a common neighbor $w$.  By the minimality of $G$, $G - e_1 - e_2$ has a good coloring.  Without loss of generality $\sU(w) \subseteq \{1,2,3\}$.    We then color $e_1$ and $e_2$ with 4 and 5, respectively.  This is a good coloring of $G$.

Now suppose $u$ has a neighbor $u'$ such that $u' \notin N_G(v)$.  $G' = G - \{u\} + u'v$ is a subcubic planar multigraph with edge multiplicity at most two, which has a good coloring by the minimality of $G$.  Suppose that $u'v$ is colored 1.  We then impose this coloring onto $G$ by coloring $uu'$ with 1.  If $v$ had another neighbor, say $v'$, then $vv'$ was colored with a color different from 1, say 2.  Our arim is to color the edges between $u$ and $v$ with colors from $\{3,4,5\}$.

Without loss of generality, suppose $3 \notin \sU(u')$.  If $\sU(v') \neq \{2,3,4\}$, color $e_1$ and $e_2$ with 3 and 4, otherwise color them with 3 and 5.  This yields a good coloring of $G$ and proves the lemma.
\end{proof}

Note that by Lemma \ref{muledges}, if $G'$ is obtained from $G$ by adding edges between distinct pairs of vertices, then $G'$ will always be a multigraph with edge multiplicity at most two.  We will use this in often to create auxiliary multigraphs smaller than $G$ that are subcubic planar, and have edge multiplicity at most two.

\begin{lemma}\label{extend}
Let $\phi$ be a good partial coloring of $G$, and let $uv \in E(G)$ be uncolored by $\phi$.  If either $\sU_\phi(u)$ or $\sU_\phi(v)$ is empty, then $\phi$ can be extended to another good partial coloring of $G$ by coloring $uv$.
\end{lemma}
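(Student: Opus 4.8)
The plan is to argue by a direct count of the colors forbidden for $uv$, exploiting the hypothesis that one endpoint starts out with an empty palette. Assume without loss of generality that $\sU_\phi(u) = \emptyset$. The first observation is that the endpoint $u$ imposes no constraint whatsoever: after assigning any color $\alpha$ to $uv$ we have $\sU(u) = \{\alpha\}$, so $|\sU(u) \cap \sU(x)| \le 1$ for every neighbor $x$ of $u$, and properness at $u$ is automatic since $u$ has no other colored edge. Hence every restriction on the choice of $\alpha$ comes from $v$, and the whole proof reduces to showing that $v$ cannot forbid all five colors.

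Next I would bound the palette at $v$. Since $uv$ is uncolored and $\deg(v) \le 3$, at most two edges incident to $v$ are colored, so $|\sU_\phi(v)| \le 2$. Properness at $v$ forbids exactly the colors in $\sU_\phi(v)$, that is, at most two colors. For the intersection condition I would examine each neighbor $w \ne u$ of $v$ and ask when adding $\alpha$ to $\sU_\phi(v)$ can push $|\sU(v) \cap \sU(w)|$ above $2$. Because $\phi$ is good we already have $|\sU_\phi(v) \cap \sU_\phi(w)| \le 2$, so a new violation can arise only if this intersection equals $2$ and $\alpha \in \sU_\phi(w) \setminus \sU_\phi(v)$.

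The key step is to bound how many colors each such neighbor can forbid. If $|\sU_\phi(v) \cap \sU_\phi(w)| = 2$ then, since $|\sU_\phi(v)| \le 2$, we must have $\sU_\phi(v) \subseteq \sU_\phi(w)$; and because $|\sU_\phi(w)| \le 3$, the leftover set $\sU_\phi(w) \setminus \sU_\phi(v)$ contains at most one color. Thus each of the at most two neighbors of $v$ distinct from $u$ contributes at most one new forbidden color. Combined with the at most two colors forbidden by properness at $v$, at most $2 + 2 = 4$ colors are forbidden in total, so among the five available colors at least one legal choice for $\alpha$ survives; coloring $uv$ with it extends $\phi$ to a good partial coloring.

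I expect the only delicate point to be the bookkeeping of the intersection constraints at $v$ — specifically, verifying that a neighbor $w$ can become problematic only when $\sU_\phi(v)$ is entirely contained in $\sU_\phi(w)$, which is precisely what caps the extra forbidden colors at one per neighbor. Everything else (properness, the empty palette at $u$, the trivial cases where $v$ has degree less than three or fewer than two colored incident edges) is routine, and it is the subcubic hypothesis, keeping all relevant palettes of size at most three, that forces the final tally to be $4 < 5$.
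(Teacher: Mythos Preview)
Your argument is correct and follows essentially the same approach as the paper's proof: both observe that $u$ imposes no constraint, that properness at $v$ forbids at most two colors, and that each of the at most two other neighbors of $v$ can forbid at most one additional color, leaving a fifth color available. The only difference is presentational---you phrase it as a clean counting bound, while the paper names the colors explicitly and walks through the cases---but the underlying logic is identical.
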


\begin{proof}
Without loss of generality, suppose $\sU_\phi(u) = \emptyset$.  If $v$ is incident to at most one colored edge, then we simply color $uv$ properly, and we are done.  So we may assume that $v$ is a 3-vertex with other neighbors $v_1$ and $v_2$, and $\phi(vv_i) = i$ for $i \in \{1,2\}$.  If we cannot extend $\phi$ by coloring $uv$ with 3, then either $\sU_\phi(v_1)$ or $\sU_\phi(v_2)$ is $\{1,2,3\}$.  Similarly, if we cannot extend $\phi$ by coloring $uv$ with 4.  So we may assume $\sU_\phi(v_1) = \{1,2,3\}$ and $\sU_\phi(v_2) = \{1,2,4\}$.  Thus, we extend $\phi$ by coloring $uv$ with 5.
\end{proof}

\begin{lemma}\label{delta}
$G$ has minimum degree at least 2.
\end{lemma}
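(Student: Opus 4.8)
The plan is to prove that $G$ has minimum degree at least $2$ by contradiction: suppose $G$ has a vertex of degree at most $1$. Since $G$ is connected and is a genuine counterexample (so $G$ has at least two vertices), we may assume $G$ has a vertex $v$ of degree exactly $1$; let $u$ be its unique neighbor. The natural strategy, matching the style already set in Lemmas~\ref{muledges} and~\ref{extend}, is to delete $v$, invoke the minimality of $G$ to obtain a good coloring $\phi$ of $G - v$, and then argue that $\phi$ can be extended across the edge $uv$ to produce a good coloring of $G$, contradicting the assumption that $G$ is a counterexample.

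First I would observe that in $G - v$, the vertex $u$ has degree at most $2$, so after restricting $\phi$ we have $|\sU_\phi(u)| \le 2$. The key point is to set up the extension so that Lemma~\ref{extend} applies. Here I would regard $v$ as the vertex with $\sU_\phi(v) = \emptyset$ (no edge incident to $v$ is colored, since $uv$ is the only such edge and it is currently uncolored), and $uv$ as the uncolored edge to be colored. Then Lemma~\ref{extend} immediately guarantees that $\phi$ extends to a good partial coloring by coloring $uv$; since $uv$ was the only uncolored edge of $G$, this yields a good coloring of all of $G$. This gives the desired contradiction and completes the proof.

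The only genuinely delicate point is confirming that the hypotheses of Lemma~\ref{extend} are met, namely that $\phi$ really is a \emph{good partial coloring of $G$} (not merely of $G - v$) before we color $uv$. Restricting a good coloring of $G - v$ to the same edge set, viewed inside $G$, we need $|\sU_\phi(w_1) \cap \sU_\phi(w_2)| \le 2$ for every adjacent pair $w_1, w_2$ in $G$. For pairs not involving $v$ this is inherited directly from the good coloring of $G - v$. For the pair $u, v$, we have $\sU_\phi(v) = \emptyset$, so the intersection is empty and the condition holds trivially. Thus $\phi$ is a good partial coloring of $G$, and the lemma applies cleanly.

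I do not expect a serious obstacle here; the content of the argument has effectively been front-loaded into Lemma~\ref{extend}, so this lemma is essentially a one-line application once the reduction to a $1$-vertex is made. The mild care required is in the trivial boundary bookkeeping: ensuring $G - v$ is nonempty (guaranteed since a single-vertex graph is trivially colorable and hence not a counterexample) and that it remains a subcubic planar multigraph with edge multiplicity at most two to which minimality applies (immediate, since deleting a vertex preserves all of these properties). The substance of the lemma is the observation that a pendant edge can always be colored, which is exactly what Lemma~\ref{extend} provides.
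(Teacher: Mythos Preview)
Your proposal is correct and follows essentially the same approach as the paper: delete the $1$-vertex $v$, use minimality to get a good coloring of $G - v$, observe this is a good partial coloring of $G$ with $\sU_\phi(v) = \emptyset$, and apply Lemma~\ref{extend} to color $uv$. Your additional bookkeeping (verifying the partial coloring is good in $G$, that $G - v$ is nonempty, etc.) is more explicit than the paper's version but amounts to the same argument.
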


\begin{proof}
Suppose that $v$ is a $1$-vertex and $u$ is the neighbor of $v$. Then $G - v$ has a good coloring.  This is a good partial coloring of $G$ in which $\sU(v) = \emptyset$.  Thus, by Lemma \ref{extend}, $G$ has a good coloring.
\end{proof}

\begin{lemma}\label{cut-vertex}
$G$ has no cut-vertex and no cut-edge.
\end{lemma}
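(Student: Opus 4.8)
The plan is to argue by contradiction using the minimality of $G$, handling cut-vertices and cut-edges separately. In each case I would split $G$ into two smaller pieces along the cut, apply minimality to color each piece, and then show that the colorings can be recombined (after a suitable relabeling of colors) into a good coloring of $G$, contradicting that $G$ is a counterexample. The key observation making this work is that a cut only forces a constraint at the single shared vertex (or the two endpoints of a cut-edge), and at such a vertex the intersection condition $|\sU(u)\cap\sU(v)|\le 2$ only involves colors on a bounded number of edges, so we have enough freedom among the five colors to reconcile the two sides.

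For the cut-edge case, suppose $e=uv$ is a cut-edge, so $G-e$ has two components $G_u\ni u$ and $G_v\ni v$. First I would color $G_u$ and $G_v$ separately by the minimality of $G$ (each is a subcubic planar multigraph with edge multiplicity at most two and fewer edges). Since $u$ and $v$ each have degree at most two within their own component (as $e$ accounts for one unit of their degree in $G$), we have $|\sU(u)|\le 2$ and $|\sU(v)|\le 2$ in the respective partial colorings. I would then permute the colors on $G_v$ so that the color I wish to assign to $e$ is available at both ends and produces no intersection violation; concretely, since the colorings of the two components use disjoint sets of incidence constraints except at $u$ and $v$, the resulting coloring of $G$ is automatically good at every vertex other than $u$ and $v$, and at $u,v$ the only new incidence is $e$ itself. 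Invoking Lemma \ref{extend} is tempting here but $e$'s endpoints are both already colored, so instead I would directly count: there are at most $2+2=4$ colors to avoid for $e$ at its two endpoints, leaving a fifth color free, and I would verify the two resulting sets $\sU(u)$ and $\sU(v)$ share at most two colors by choosing the color of $e$ outside the three-or-fewer colors forced on the far sides.

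For the cut-vertex case, suppose $v$ is a cut-vertex. By Lemma \ref{delta}, $v$ has degree 2 or 3; write $N(v)$ split among the blocks meeting at $v$. I would form smaller graphs by separating the blocks at $v$ (assigning $v$ to each piece with its incident edges from that piece), color each by minimality, and then relabel colors on one piece so that the color sets at the copies of $v$ agree and remain proper. The degree of $v$ being at most three means $v$ is incident to at most three edges total; distributing these across at most three blocks and using a permutation of the five colors to synchronize, I expect to reconcile the pieces with room to spare. I anticipate the \textbf{main obstacle} to be the bookkeeping when $v$ is a $3$-vertex whose three edges are split $2{+}1$ across two blocks: here the two edges in one block and the one edge in the other must receive colors that are simultaneously proper at $v$, so that $|\sU(v)|$ is computed correctly, and I must ensure the intersection constraint holds between $v$ and its neighbors on both sides. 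Because only five colors are available and at most three edges meet $v$, a careful permutation argument — possibly appealing to Lemma \ref{extend} after temporarily uncoloring one edge at $v$ to reset one side — should suffice to close this case and complete the proof.
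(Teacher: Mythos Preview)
Your counting in the cut-edge case is incomplete, and this is a genuine gap. When you delete $e=uv$, color the two components, and then try to reinstate $e$, the ``$2+2=4$ colors to avoid'' only accounts for propriety at $u$ and $v$. You have not controlled the $2$-intersection constraint between $u$ and its \emph{other} neighbors in $G_u$: adding a new color $c$ to $\sU(u)$ can push $|\sU(u)\cap\sU(u')|$ from $2$ up to $3$ whenever $c\in\sU(u')$, and likewise on the $v$-side. Your sentence ``the resulting coloring of $G$ is automatically good at every vertex other than $u$ and $v$'' overlooks that the adjacent pair $(u,u')$ involves $u$, whose color set has changed. These extra constraints can forbid several more colors, so the fifth-color argument as written does not close.

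The paper sidesteps both of your difficulties with two clean moves. First, since $G$ is subcubic, any cut-vertex is incident to a cut-edge, so the cut-vertex case reduces immediately to the cut-edge case; your separate block-splitting argument (with its ``$2{+}1$ bookkeeping'') is unnecessary. Second, for a cut-edge $v_1v_2$, the paper does \emph{not} delete the edge: it forms $G_1$ from the $v_1$-side together with $v_2$ and the edge $v_1v_2$ (so $v_2$ is a pendant in $G_1$), and symmetrically $G_2$. A good coloring of $G_1$ then already respects the intersection constraints between $v_1$ and all of its neighbors in $H_1$, with the color of $v_1v_2$ built in. One now only needs to permute so that $\phi_1(v_1v_2)=\phi_2(v_1v_2)=1$, $\sU_{\phi_1}(v_1)\subseteq\{1,2,3\}$, and $\sU_{\phi_2}(v_2)\subseteq\{1,4,5\}$; merging gives $|\sU(v_1)\cap\sU(v_2)|\le 1$. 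Keeping the edge in both pieces is exactly the trick that eliminates the ``far neighbor'' constraints you did not account for.
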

\begin{proof}
Since $G$ is subcubic, the existence of a cut-vertex implies the existence of a cut-edge.  Thus, it suffices to suppose that $G$ has a cut-edge, say $v_1v_2$.  For $i = 1,2$, let $H_i$ be the component of $v_1v_2$ containing $v_i$.  By Lemma \ref{delta}, $|V(H_i)| \ge 2$.  Define $G_1$ to be the graph consisting of $H_1$ together with $v_2$ and the edge $v_1v_2$.  Similarly define 
$G_2$ to be the graph consisting of $H_2$ together with $v_1$ and the edge $v_1v_2$. 

By the minimality of $G$, $G_1$ and $G_2$ have good colorings, $\phi_1$ and $\phi_2$, respectively.  We may assume $\sU_{\phi_1}(v_1) \subseteq \{1,2,3\}$,  $\sU_{\phi_2}(v_2) \subseteq \{1,4,5\}$ with $\phi_1(v_1v_2) = \phi_2(v_1v_2) = 1$.  Merging these two colorings yields a good coloring of $G$.
\end{proof}

\begin{lemma}\label{2-edge-cut} 
If $\{e_1, e_2\}$ is an edge-cut in $G$, then $e_1$ and  $e_2$ are adjacent to each other.
\end{lemma}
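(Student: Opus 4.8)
The plan is to argue by contradiction: suppose $\{e_1,e_2\}$ is an edge-cut with $e_1$ and $e_2$ non-adjacent, so that their four endpoints are distinct. Write $e_1=u_1w_1$ and $e_2=u_2w_2$. Deleting $e_1,e_2$ disconnects $G$ into exactly two components; since $G$ has no cut-edge by Lemma \ref{cut-vertex}, each $e_i$ must have one endpoint in each component, so after relabeling we may assume $u_1,u_2$ lie in one component $H_1$ and $w_1,w_2$ in the other component $H_2$. As the four endpoints are distinct, $u_1\neq u_2$ and $w_1\neq w_2$.

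I would then build two auxiliary graphs $G_1=H_1+u_1u_2$ and $G_2=H_2+w_1w_2$. Each has fewer vertices than $G$ (the complementary component is nonempty), is subcubic (deleting $e_i$ drops the degree of each endpoint by one, and the new edge restores at most one), and has edge multiplicity at most two by the remark following Lemma \ref{muledges}. For planarity I would invoke the fixed embedding: the $2$-edge-cut $\{e_1,e_2\}$ is crossed by a simple closed curve separating $H_1$ from $H_2$, so after deleting $e_1,e_2$ the vertices $u_1,u_2$ bound a common face of $H_1$ and $w_1,w_2$ bound a common face of $H_2$, whence $u_1u_2$ and $w_1w_2$ can be drawn without crossings. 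By the minimality of $G$, the graphs $G_1$ and $G_2$ have good colorings $\phi_1,\phi_2$.

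Now I would merge these. Set $\alpha=\phi_1(u_1u_2)$ and, after permuting the five colors of $\phi_2$, assume $\phi_2(w_1w_2)=\alpha$. Define a coloring $\phi$ of $G$ by using $\phi_1$ on $E(H_1)$, $\phi_2$ on $E(H_2)$, and coloring both $e_1$ and $e_2$ with $\alpha$. The key observation is that $\sU_\phi(x)=\sU_{\phi_1}(x)$ for every $x\in V(H_1)$ and $\sU_\phi(x)=\sU_{\phi_2}(x)$ for every $x\in V(H_2)$: at $u_1$ the color $\alpha$ formerly contributed by $u_1u_2$ is now contributed by $e_1$, and similarly at $u_2,w_1,w_2$. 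Consequently every properness and $2$-intersection constraint internal to $H_1$ or to $H_2$ is inherited unchanged from $\phi_1$ or $\phi_2$, and the color $\alpha$ on each $e_i$ is proper at its endpoints.

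The only conditions not yet accounted for are the $2$-intersection constraints along the cut edges themselves, namely $|\sU_\phi(u_1)\cap\sU_\phi(w_1)|\le 2$ and $|\sU_\phi(u_2)\cap\sU_\phi(w_2)|\le 2$. This is the main obstacle, and I would resolve it using the remaining freedom to permute $\phi_2$ by any permutation fixing $\alpha$. Writing $A_i=\sU_{\phi_1}(u_i)$ and $B_i=\sU_{\phi_2}(w_i)$, each set has size at most three and contains $\alpha$, so a violation can occur only when $|A_i|=|B_i|=3$, and then exactly when the two-element sets $A_i\setminus\{\alpha\}$ and $B_i\setminus\{\alpha\}$ coincide after permutation. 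Among the $24$ permutations of the four colors other than $\alpha$, at most $4$ force the equality for $i=1$ and at most $4$ force it for $i=2$; since $8<24$, some permutation avoids both. This yields a good coloring of $G$, contradicting the choice of $G$ and proving that $e_1$ and $e_2$ must be adjacent.
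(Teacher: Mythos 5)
Your proof is correct, but it resolves the key difficulty by a genuinely different mechanism than the paper. Both arguments split $G$ along the cut, color the two smaller pieces by minimality, and merge; the only real obstacle either way is the $2$-intersection condition across the two cut edges, which can fail only when the two endpoints of a cut edge are both $3$-vertices with identical $3$-element palettes. The paper attaches a new degree-$2$ apex vertex to the two cut endpoints on each side (so the two cut edges may receive two different colors) and then repairs any palette coincidence by an explicit case analysis that switches pairs of colors in one of the colorings. You instead add a single edge inside each side joining the two cut endpoints there, which forces both cut edges to receive the same color $\alpha$, and you then eliminate both possible coincidences simultaneously by counting: among the $4!=24$ permutations of the non-$\alpha$ colors, a permutation is bad for a given cut edge only if it maps one prescribed $2$-subset of the remaining four colors onto another, and there are exactly $2!\cdot 2!=4$ such permutations, so at most $8<24$ are bad in total. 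This counting step is correct and replaces the paper's page of switching cases with one line. Your supporting observations are also sound and worth keeping explicit: by Lemma \ref{muledges} the graph is simple, so the added edges keep multiplicity at most two; each side contains two of the four distinct endpoints, so both auxiliary graphs are strictly smaller; by Lemma \ref{cut-vertex} neither $e_i$ is a cut edge, so the cut is minimal and the two edges lie on two common faces, which justifies drawing the added edges without crossings; and the palettes at $u_1,u_2,w_1,w_2$ are unchanged under the merge, so all constraints internal to either side are inherited.
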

\begin{proof}
If not, then we have an edge-cut $\{u_{1}v_{1},u_{2}v_{2}\}$ in $G$ that is a matching.  We may assume that $u_1$ and $u_2$ are in the same component of $G - u_1v_1 -  u_2v_2$ so that we can define $H_u$ to be the component of $G - u_1v_1 - u_2v_2$ containing $v_1$ and $v_2$.  Let $H_v = G - H_u$, and let $G_u$ be the graph consisting of $H_u$ together with a new vertex $u$ whose neighborhood is $\{v_1,v_2\}$.  Similarly, let $G_v$ be the graph consisting of $H_v$ together with a new vertex $v$ whose neighborhood is $\{u_1,u_2\}$.  Observe that $G_u$ and $G_v$ are subcubic planar multigraphs, and so by the minimality of $G$, $G_u$ and $G_v$ have good colorings $\phi_u$ and $\phi_v$, respectively.  

Permute these colorings so that for $i \in \{1,2\}$, $\phi_v(u_iv) = \phi_u(v_iu)$.  If $\sU_{\phi_u}(v_1) \neq \sU_{\phi_v}(u_1)$ and $\sU_{\phi_u}(v_2) \neq \sU_{\phi_v}(v_2)$, then merging these two colorings yields a good coloring of $G$.  So suppose that $\sU_{\phi_u}(v_1) = \sU_{\phi_v}(u_1) = \{1,2,3\}$ with $\phi_u(uv_1) = \phi_v(vu_1) = 1$.  

Since $\phi_u$ is a good coloring, either $\phi_u(uv_2) \in \{2,3\}$ or $\phi_u(uv_2) \in \{4,5\}$.  Suppose first that $\phi_u(uv_2) = 2$.  Suppose $\sU_{\phi_v}(u_2) = \sU_{\phi_u}(v_2)$.  If $3 \in \sU_{\phi_u}(v_2)$, then switch 3 with a color from $\{4,5\}\setminus\sU_{\phi_u}(v_2)$ in $\phi_u$.  If $3 \notin \sU_{\phi_u}(v_2)$, then switch 3 with a color in $\{4,5\}\cap \sU_{\phi_u}(v_2)$.  In either case, merging this new coloring with $\phi_v$ yields a good coloring of $G$.

So $\sU_{\phi_v}(u_2) \neq \sU_{\phi_u}(v_2)$.  Suppose $\sU_{\phi_u}(v_2) = \{2,4,5\}$.  If $\{4,5\} \cap \sU_{\phi_v}(u_2) = \emptyset$, then switch 3 with either 4 or 5 in $\phi_u$.  Otherwise, switch 3 with a color in $\{4,5\} \cap \sU_{\phi_v}(u_2)$ in $\phi_u$.  Now suppose $\{4,5\} \cap \sU_{\phi_u}(v_2) = \{4\}$.  If $3 \in \sU_{\phi_u}(v_2)$, then switch 3 with 4 in $\phi_u$, otherwise switch 3 with 5 in $\phi_u$.  A similar argument holds when $\{4,5\} \cap \sU_{\phi_u}(v_2) = \{5\}$, so $\sU_{\phi_u}(v_2) = \{1,2,3\}$.  By symmetry, $\sU_{\phi_v}(u_2) = \{1,2,3\}$, however we assume $\sU_{\phi_v}(u_2) \neq \sU_{\phi_u}(v_2)$, a contradiction.   In all cases, we obtain either a contradiction or a new good coloring of $G_u$, $\phi'_u$ such that $\sU_{\phi'_u}(v_2) \neq \sU_{\phi_v}(u_2)$ and $\sU_{\phi'_u}(v_1) \neq \sU_{\phi_v}(u_1)$.  So merging $\phi_v$ with $\phi'_u$ yields a good coloring of $G$.

Thus, it remains to consider when $\phi_u(uv_2) = 4$.  Suppose $\sU_{\phi_u}(v_2) = \sU_{\phi_v}(u_2)$.  If $5 \in \sU_{\phi_u}(v_2)$, switch 5 with a color in $\{2,3\}\setminus\sU_{\phi_u}(v_2)$ in $\phi_u$.  Otherwise, switch 5 with a color in $\{2,3\} \cap \sU_{\phi_u}(v_2)$.

So $\sU_{\phi_u}(v_2) \neq \sU_{\phi_v}(u_2)$.  Suppose $\sU_{\phi_u}(v_2) = \{2,3,4\}$.  If $\{2,3\} \cap \sU_{\phi_v}(u_2) = \emptyset$, then switch 5 with either 2 or 3 in $\phi_u$.  Otherwise, switch 5 with a color in $\{2,3\} \cap \sU_{\phi_v}(u_2)$ in $\phi_u$.  Now suppose $\{2,3\} \cap \sU_{\phi_u}(v_2) = \{2\}$.  If $5 \in \sU_{\phi_u}(v_2)$, switch 5 with 2 in $\phi_u$.  Otherwise, switch 5 with 3 in $\phi_u$.  A similar argument holds when $\{2,3\} \cap \sU_{\phi_u}(v_2) = \{3\}$, so $\sU_{\phi_u}(v_2) = \{1,4,5\}$.  By symmetry, $\sU_{\phi_v}(u_2) = \{1,4,5\}$, however we assume $\sU_{\phi_v}(u_2) \neq \sU_{\phi_u}(v_2)$, a contradiction.  In all cases, we obtain either a contradiction or a new coloring of $G_u$, $\phi'_u$ such that $\sU_{\phi'_u}(v_2) \neq \sU_{\phi_v}(u_2)$ and $\sU_{\phi'_u}(v_1) \neq \sU_{\phi_v}(u_1)$.  So merging $\phi_v$ with $\phi'_u$ yields a good coloring of $G$.
\end{proof}

\begin{lemma}\label{NoTriangle}
$G$ has no triangles.
\end{lemma}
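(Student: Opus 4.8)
The plan is to assume $G$ contains a triangle $T = v_1v_2v_3$ and derive a contradiction, organizing the argument by $d$, the number of $3$-vertices among $v_1,v_2,v_3$. By Lemma \ref{delta} each $v_i$ has degree at least $2$, so each $v_i$ is either a $2$-vertex (whose only neighbors are the other two triangle vertices) or a $3$-vertex with a unique neighbor $u_i \notin \{v_1,v_2,v_3\}$.

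The low-connectivity lemmas dispose of the small cases quickly. If $d=0$ then $G=K_3$ by connectedness, and coloring the cycle in order with $1,2,3$ is good, contradicting that $G$ is a counterexample. If $d=1$, say $v_1$ is the only $3$-vertex, then $v_1u_1$ is the unique edge joining $\{v_1,v_2,v_3\}$ to the rest of $G$, hence a cut-edge, contradicting Lemma \ref{cut-vertex}. If $d=2$, say $v_1,v_2$ are the $3$-vertices, then $\{v_1u_1,v_2u_2\}$ is an edge-cut separating $\{v_1,v_2,v_3\}$; by Lemma \ref{2-edge-cut} these edges are adjacent, forcing $u_1=u_2=:u$. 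Then everything outside $\{v_1,v_2,v_3,u\}$ reaches the triangle only through $u$, so either $u$ has a further neighbor and is a cut-vertex (contradicting Lemma \ref{cut-vertex}), or $u$ has degree $2$ and $G=K_4-e$, which I dispatch with a single explicit good coloring giving the two $3$-vertices disjoint palettes on their non-shared edges.

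The remaining and main case is $d=3$, where $v_1u_1,v_2u_2,v_3u_3$ form a $3$-edge cut that the earlier lemmas do not forbid, so I must recolor. First note $G\neq K_4$, since $\tchi(K_4)\le 5$ means $K_4$ is not a counterexample; hence the $u_i$ are not all equal. I then pass to $G'=G-v_1v_2-v_2v_3-v_1v_3$, which has the same vertex set but three fewer edges, so by the minimality of $G$ it has a good coloring $\phi$ in which each $v_i$ is incident to exactly one colored edge, of color $\gamma_i:=\phi(v_iu_i)$. It remains to color the three triangle edges with pairwise-distinct colors so that the extension is good on $G$. Writing $x=\phi(v_1v_2)$, $y=\phi(v_1v_3)$, $z=\phi(v_2v_3)$, the requirements are: $x\notin\{\gamma_1,\gamma_2\}$, $y\notin\{\gamma_1,\gamma_3\}$, $z\notin\{\gamma_2,\gamma_3\}$ (propriety); $|\sU(v_i)\cap\sU(v_j)|\le 2$, i.e.\ at most one coincidence beyond the shared triangle color; and $|\sU(v_i)\cap\sU(u_i)|\le 2$, which, since $\gamma_i$ already lies in both sets, amounts to forbidding both triangle colors at $v_i$ from lying in $\sU(u_i)\setminus\{\gamma_i\}$.

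This extension is the crux and the expected main obstacle. It is a list-edge-coloring of a triangle in which each edge retains at least three admissible colors out of five, subject to the mild extra intersection conditions above; because of the ample slack (lists of size $\ge 3$ in a palette of $5$, with each $u_i$ contributing at most the two ``dangerous'' colors of $\sU(u_i)\setminus\{\gamma_i\}$), such a choice always exists. Verifying this cleanly requires only a short case analysis according to how many of $\gamma_1,\gamma_2,\gamma_3$ coincide and which externals $u_i$ coincide: coincidences among the $u_i$ either harmlessly shrink the dangerous sets or, when they would create a low-degree pocket, are eliminated beforehand by Lemmas \ref{cut-vertex} and \ref{2-edge-cut} exactly as in the cases $d\le 2$. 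Carrying out this bookkeeping yields the desired good coloring of $G$, the final contradiction.
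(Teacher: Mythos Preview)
Your handling of the cases $d\le 2$ is fine, and the overall plan is sensible. The genuine gap is in the main case $d=3$: the reduction $G' = G - v_1v_2 - v_2v_3 - v_1v_3$ is too weak, and your assertion that ``such a choice always exists'' is false as stated.

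Here is a concrete obstruction. Take $\gamma_1=1$, $\gamma_2=2$, $\gamma_3=3$, with the $u_i$ distinct and $\sU_\phi(u_1)=\{1,4,5\}$, $\sU_\phi(u_2)=\{2,4,5\}$, $\sU_\phi(u_3)=\{3,4,5\}$. Nothing in your setup forbids this: the $v_i$ are leaves in $G'$, so the good coloring of $G'$ places no constraint linking the $\sU_\phi(u_i)$ to one another. Enumerating all proper triples $(x,y,z)$ with $x\in\{3,4,5\}$, $y\in\{2,4,5\}$, $z\in\{1,4,5\}$, exactly six pass the $v_i$--$v_j$ intersection test, namely $(3,4,5)$, $(3,5,4)$, $(4,2,5)$, $(5,2,4)$, $(4,5,1)$, $(5,4,1)$. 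For each of these, one of the $\sU(v_i)$ equals $\{i,4,5\}=\sU_\phi(u_i)$, giving $|\sU(v_i)\cap\sU(u_i)|=3$. So no extension exists, and the ``short case analysis'' you promise cannot close.

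The paper avoids this by a stronger reduction: it deletes the triangle \emph{vertices} and adds the edge $y_1z_1$ between two of the outside neighbors. That added edge forces $\phi(yy_1)=\phi(zz_1)$ and, crucially, $|\sU_\phi(y_1)\cap\sU_\phi(z_1)|\le 2$, so their palettes cannot coincide. This asymmetry is exactly what lets the paper push two triangle edges into $\{2,3\}$ while keeping the $v_i$--$u_i$ intersections under control; your edge-only deletion loses precisely this leverage. To repair your argument you would need either to switch to the vertex-deletion-plus-edge reduction, or to allow recoloring of some $v_iu_i$ in $G'$ and argue that one can always escape the bad configuration above.
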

\begin{proof}
Suppose that $x,y,z$ is a triangle in $G$.  Suppose first that $x$ is a 2-vertex, and let $y_1$ and $z_1$ be the potential third neighbors of $y$ and $z$, respectively.  By the minimality of $G$, $G - x$ has a good coloring.  Suppose that $yz$ is colored 1 and $\sU(y) \cup \sU(z) \subseteq \{1,2,3\}$.  Without loss of generality $\sU(y) \subseteq \{1,2\}$.  If $1 \notin \sU(z')$, color $xz$ with either 4 or 5.  Otherwise, color $xz$ with a color from $\{4,5\}\setminus\sU(z')$.  Without loss of generality, suppose $xz$ is colored with 5.  If $1 \notin \sU(y')$, color $xy$ with either 3 or 4.  Otherwise, color $xz$ with a color from $\{3,4\}\setminus\sU(z')$.  This yields a good coloring of $G$ so that every vertex in  $\{x,y,z\}$ is a 3-vertex.

Let $x_1, y_1, z_1$ be the third neighbors of $x, y, z$, respectively.  If $x_1 = y_1 = z_1$, then $G = K_3$, and we are done.  So suppose $y_1 \neq z_1$.  $G' = G - \{x,y,z\} + y_1z_1$ is a subcubic planar multigraph with multiplicity at most two.  By the minimality of $G$, $G'$ has a good coloring.  We impose this coloring onto $G$ by coloring $yy_1$ and $zz_1$ with the color used on the added $y_1z_1$ in $G'$.  This yields a good partial coloring of $G$, call it $\phi$.  

Suppose $\phi(yy_1) = \phi(zz_1) = 1$.  We may assume that either both $y_1$ and $z_1$ are 2-vertices or that $y_1$ is a 3-vertex.  In either case, suppose $\sU_\phi(y_1) \subseteq \{1,2,3\}$.  By the existence of the added $y_1z_1$ in $G'$, $\sU_\phi(z_1) \neq \{1,2,3\}$.  Therefore, we can extend $\phi$ by coloring $xz$ and $yz$ from $\{2,3\}$ so that $xz$ is colored from $\{2,3\}\setminus\{\phi(xx_1)\}$.  Without loss of generality, assume $xz$ and $yz$ are colored with 2 and 3, respectively.

Suppose $\phi(xx_1) \in \{1,2,3\}$.  If $\phi(xz) \notin \sU_\phi(x_1)$, color $xy$ with either 4 or 5. Otherwise, color $xy$ with a color from  $\{4,5\}\setminus\sU_\phi(x_1)$. This yields is a good coloring of $G$.

So $\phi(xx_1) \in \{4,5\}$.  Without loss of generality, suppose $\phi(xx_1) = 4$.  If $\sU_\phi(x_1) \neq \{2,5\}$, color $xy$ with 5.  Otherwise, color $xy$ with 5, and then recolor $xz$ and $yz$ with 3 and 2, respectively.  In either case, we obtain a good coloring of $G$.
\end{proof}

\begin{lemma}\label{distance>=3}
The distance between any two 2-vertices is at least three.
\end{lemma}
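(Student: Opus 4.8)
The plan is to assume, for contradiction, that $G$ contains two $2$-vertices $u$ and $v$ at distance at most two and to produce a good coloring of $G$. The whole argument rests on one simple observation: if $x$ is a $2$-vertex, then $|\sU_\phi(x)| \le 2$ in any coloring, so the condition $|\sU_\phi(x) \cap \sU_\phi(y)| \le 2$ holds automatically for every neighbor $y$ of $x$. Hence the intersection requirement is only ever binding between two $3$-vertices, and when we (re)color edges incident to $2$-vertices we need only worry about properness together with the intersection constraints at the far, possibly $3$-valent, endpoints.

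I would split into two cases. Suppose first that $u$ and $v$ are adjacent, and let $u'$ and $v'$ be their other neighbors; by Lemma \ref{NoTriangle} we have $u' \neq v'$. By the minimality of $G$, the smaller graph $G - \{u,v\}$ has a good coloring, which I regard as a good partial coloring $\phi$ of $G$ with $\sU_\phi(u) = \sU_\phi(v) = \emptyset$. I then apply Lemma \ref{extend} twice to color first $u'u$ and then $vv'$, keeping the coloring good. Finally $uv$ is colored directly: both of its endpoints are $2$-vertices, so the only requirement is that $uv$ avoid the colors of $u'u$ and $vv'$, leaving at least three admissible colors. This contradicts the choice of $G$.

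For the case that $u$ and $v$ are at distance exactly two, let $w$ be a common neighbor. If $w$ is a $2$-vertex, then $u$ and $w$ are adjacent $2$-vertices and the previous case applies, so I may assume $w$ is a $3$-vertex with third neighbor $w'$; writing $u'$ and $v'$ for the neighbors of $u$ and $v$ other than $w$ (possibly $u'=v'$), Lemma \ref{NoTriangle} gives $w' \notin \{u',v'\}$. As before, a good coloring of $G - \{u,v\}$ yields a good partial coloring $\phi$ of $G$, and Lemma \ref{extend} colors $u'u$ and $vv'$. It then remains to color the two edges $uw$ and $vw$; since $u$ and $v$ are $2$-vertices, the only intersection condition that can fail is $|\sU_\phi(w) \cap \sU_\phi(w')| \le 2$ at the $3$-vertex $w$. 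Setting $c_w = \phi(ww')$, this reduces to choosing proper colors for $uw$ and $vw$, each avoiding the color already forced at $u$ and at $v$, so that at most one of them lies in $\sU_\phi(w') \setminus \{c_w\}$.

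The main obstacle is precisely this last step. Since $\deg(w') \le 3$ we have $|\sU_\phi(w')| \le 3$, so at least two colors lie outside $\sU_\phi(w')$; I would first try to color both $uw$ and $vw$ from this set, and, should a clash with the colors forced at $u$ and $v$ leave no room, I would instead send one of the two edges into $\sU_\phi(w') \setminus \{c_w\}$, which is then nonempty and disjoint from the colors at issue. A short check of these few subcases shows a valid choice always exists, giving a good coloring of $G$ and the desired contradiction. All of this is elementary counting; the only care needed is bookkeeping of which colors are forbidden at $w$ versus which may safely be shared with $w'$.
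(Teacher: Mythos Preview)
Your proof is correct and rests on the same key observation as the paper's: a $2$-vertex $x$ always has $|\sU_\phi(x)|\le 2$, so the intersection constraint at $x$ is automatic and only properness matters there.

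The two arguments differ in the reduction used. You delete the two $2$-vertices and then rebuild all four incident edges, invoking Lemma~\ref{extend} for the outer edges $u'u$ and $vv'$ and finishing with a small case check for $uw,vw$ against $\sU_\phi(w')$. The paper instead deletes only the two edges at the central vertex. For the adjacent case it removes just $uv$ and colors it properly; for the distance-two case it removes the two edges $xv,yv$ at the common $3$-vertex $v$, so that in the inherited coloring $\sU(z)$ is already fully known (say $\{1,2,3\}$) and $xv$ can be taken from $\{4,5\}\setminus\{\alpha\}$ directly, after which $yv$ is colored properly. This avoids your final case analysis entirely: knowing $\sU(z)$ up front makes the intersection bound at $v$ immediate. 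Your route works, but the edge-deletion reduction is the shorter one here.
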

\begin{proof}
Suppose first that $u$ and $v$ are adjacent 2-vertices in $G$.  By Lemma \ref{NoTriangle}, $u$ and $v$ have distinct neighbors $u_1$ and $v_1$, respectively.  By the minimality of $G$, $G - uv$ has a good coloring.  We then properly color $uv$ to obtain a good coloring of $G$.  

Now suppose that $v$ is a 3-vertex with neighbors $x,y, z$ such that $x$ and $y$ are 2-vertices.  If $z$ is a 2-vertex, then by the minimality of $G$, $G - v$ has a good coloring.  We then properly color $xv, yv, zv$ to obtain a good coloring of $G$.  So we may assume that $z$ is a 3-vertex.  

By the minimality of $G$, $G - xv - yv$ has a good coloring.  Suppose that $\sU(z) = \{1,2,3\}$, $\sU(x) = \{\alpha\}$, and $\sU(y) = \{\beta\}$.  We first color $xv$ with a color in $\{4,5\}\setminus\{\alpha\}$, and then color $yv$ properly.  This yields a good coloring of $G$.
\end{proof}

\begin{lemma}\label{separating}
$G$ has no separating cycle of length four or five.
\end{lemma}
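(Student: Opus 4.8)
The plan is to suppose, for contradiction, that $C = v_0 v_1 \cdots v_{\ell-1}$ is a separating cycle with $\ell \in \{4,5\}$, and to assemble a good coloring of $G$ by coloring the two regions that $C$ bounds separately and then reconciling them along $C$. Write $V_{\mathrm{in}}$ and $V_{\mathrm{out}}$ for the (both nonempty, since $C$ separates) sets of vertices strictly inside and strictly outside $C$, and let $G_{\mathrm{in}}$ and $G_{\mathrm{out}}$ be the subgraphs of $G$ induced by $V(C) \cup V_{\mathrm{in}}$ and $V(C) \cup V_{\mathrm{out}}$. Both are subcubic planar graphs with edge multiplicity at most two, and each has strictly fewer vertices than $G$ because the opposite region is nonempty; hence, by the minimality of $G$, each admits a good coloring, say $\phi_{\mathrm{in}}$ and $\phi_{\mathrm{out}}$. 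Since $G$ has no triangles (Lemma \ref{NoTriangle}), $C$ has no chord (a chord of a $4$- or $5$-cycle would create a triangle), so each vertex of $C$ carries its two cycle-edges together with \emph{at most one} further edge, and that edge lies entirely in one region. I will call $v_i$ a \emph{type-I}, \emph{type-O}, or \emph{type-N} vertex according as its extra edge goes inside, goes outside, or does not exist.

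Next I would reduce the merge to a purely local condition on $C$. Suppose $\phi_{\mathrm{in}}$ and $\phi_{\mathrm{out}}$ assign the \emph{same} color to every edge of $C$; then their union $\phi$ is a proper coloring of $G$, and for every $w \notin V(C)$ the set $\sU_\phi(w)$ equals the value taken in whichever side contains $w$, while at a cycle vertex $v_i$ the set $\sU_\phi(v_i)$ is determined by the side carrying its extra edge. Consequently the only adjacencies where $\phi$ could fail to be good are the edges of $C$ itself: for a cycle edge $v_iv_{i+1}$ with both extra edges on the same side, or with a type-N endpoint, the two relevant color sets come from a single good coloring, so $|\sU_\phi(v_i)\cap \sU_\phi(v_{i+1})| \le 2$ holds automatically. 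The dangerous edges are the \emph{mixed} ones, with $v_i$ type-I and $v_{i+1}$ type-O (or vice versa): there $\sU_\phi(v_i)$ and $\sU_\phi(v_{i+1})$ are both $3$-sets sharing the color $\phi(v_iv_{i+1})$, and the merge fails exactly when these two triples coincide. Using that $G$ is $2$-edge-connected (Lemmas \ref{cut-vertex} and \ref{2-edge-cut}), the interior and exterior are each joined to $C$ by at least two edges, so $C$ has at least two type-I and at least two type-O vertices; counting then forces, on a $4$-cycle, exactly two of each with no type-N vertex, and on a $5$-cycle at most one type-N vertex, whose isolation among the $2$-vertices is controlled by Lemma \ref{distance>=3}.

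Finally I would carry out the reconciliation, which is the crux. I would fix $\phi_{\mathrm{in}}$ first, pinning down the colors on $C$ and the extra colors at the type-I vertices, and then seek a good coloring of $G_{\mathrm{out}}$ agreeing with $\phi_{\mathrm{in}}$ on $C$; by permuting the five colors of $\phi_{\mathrm{out}}$ to match the color-repetition pattern of $\phi_{\mathrm{in}}$ along the short cycle, and then recoloring the (at most five) extra edges at the type-O vertices — each of which has several free colors available among $\{1,\dots,5\}$ — I aim both to make the two colorings agree on $C$ and to destroy every coincidence $\sU_\phi(v_i)=\sU_\phi(v_{i+1})$ at a mixed edge. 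The main obstacle is precisely this case analysis: in the all-mixed alternating arrangement on a $4$-cycle every cycle edge is dangerous, so the consecutive triples must be kept pairwise distinct \emph{simultaneously} while the cycle colors stay consistent, and the length-$5$ analysis must additionally accommodate the possible lone type-N vertex. I expect these to be the only configurations requiring care, and in each the surplus of colors (five against triples that overlap only in the single shared cycle color) should leave enough freedom at the type-O extra edges to complete a good coloring of $G$, contradicting the choice of $G$.
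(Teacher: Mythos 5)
Your overall strategy is genuinely different from the paper's, and unfortunately the step you defer to the end is exactly where the argument breaks down. The paper does not color the two sides and merge at all: it first shows (by a reduction argument) that no $4$-cycle of $G$ contains a $2$-vertex, and then argues purely structurally that a separating $4$- or $5$-cycle $C$ is chordless, so one side of $C$ receives at most $\lfloor 5/2\rfloor = 2$ edges; one such edge would be a cut-edge (contradicting Lemma \ref{cut-vertex}), and two such edges form a $2$-edge-cut whose edges must share an endpoint $u$ inside $C$ (Lemma \ref{2-edge-cut}), forcing $u$ to be a $2$-vertex that closes up a triangle or a $4$-cycle with a $2$-vertex --- both forbidden. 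No reconciliation of two colorings is ever needed.

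The concrete gap in your proposal is the reconciliation itself. First, you cannot in general make $\phi_{\mathrm{out}}$ agree with $\phi_{\mathrm{in}}$ on $C$ by permuting colors: the color-repetition pattern of a proper edge-coloring of a short cycle is a permutation invariant, and the two sides may realize different patterns (e.g.\ $1,2,1,2$ versus $1,2,3,4$ on a $4$-cycle, or $(2,2,1)$ versus $(2,1,1,1)$ multiplicity patterns on a $5$-cycle). Repairing this forces you to recolor edges of $C$ inside one of the two colorings, which changes $\sU$ at every vertex of $C$ and hence perturbs the goodness condition at every boundary edge on that side against neighbors whose color triples you do not control --- none of which you analyze. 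Second, even granting agreement on $C$, your plan to destroy each coincidence $\sU_\phi(v_i)=\sU_\phi(v_{i+1})$ at a mixed edge by recoloring the outside extra edge $v_{i+1}u$ is only asserted: such a recoloring also changes $\sU_\phi(u)$ and must keep $|\sU_\phi(u)\cap\sU_\phi(u')|\le 2$ for $u$'s other neighbors $u'$, whose triples may block every available color; and in the alternating arrangement on a $4$-cycle all four cycle edges are simultaneously dangerous, so these repairs interact. The parts of your write-up that are correct --- chordlessness of $C$, the localization of possible failures to mixed cycle edges, and the count of at least two type-I and two type-O vertices --- do not get you past this; in fact that very count is what the paper exploits, in the opposite direction, to avoid any coloring surgery.
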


\begin{proof}
We first show that $G$ has no 4-cycle with a 2-vertex.  Suppose that $x_0x_1x_2x_3$ is a 4-cycle.   Suppose $x_0$ is a 2-vertex.  By Lemma \ref{distance>=3}, $x_0$ is the only 2-vertex on this 4-cycle.  For $i \in \{1,2,3\}$, let $y_i$ be the third neighbor of $x_i$.  By Lemma \ref{NoTriangle}, the $y_i$'s are not on this 4-cycle, $y_1 \neq y_2,$ and $y_2 \neq y_3$.  

Let $G'$ be formed from $G$ be removing $x_0, x_1, x_2, x_3$, adding a new vertex $x$ and edges $xy_1, xy_2, xy_3$.  If $y_1 = y_3$, then $xy_1, xy_3$ are parallel edges.  Regardless, $G'$ is still a subcubic planar multigraph with multiplicity at most two.  Thus, by the minimality of $G$, $G'$ has a good coloring.  We impose this good coloring onto $G$ by coloring $x_iy_i$ with the color on $xx_i$ for $i \in \{1,2,3\}$, and  without loss of generality assume $x_iy_i$ is colored $i$.  

We extend this to another good partial coloring of $G$, call it $\phi$, by coloring $x_0x_1$ with 2 and $x_1x_2$ with 3.  if $3 \notin \sU_\phi(y_2)$, color $x_2x_3$ with either 4 or 5.  Otherwise, color $x_2x_3$ with a color from $\{4,5\}\setminus\sU_\phi(y_2)$.  Let $\alpha$ be the color used on $x_2x_3$.  If $\alpha \notin \sU_\phi(y_3)$, color $x_3x_0$ with a color from $\{1,4,5\}\setminus\{\alpha\}$.  Otherwise, color $x_3x_0$ with a color from $\{1,4,5\}\setminus(\sU_\phi(y_3) \cup \{\alpha\})$.  This yields a good coloring of $G$ so that $G$ has no 4-cycle with a 2-vertex.  We will use this to show that $G$ has no separating 4-cycle or 5-cycle.

If on the contrary, $G$ has a separating 4-cycle or 5-cycle, call it $C$.  By Lemma \ref{NoTriangle}, $C$ has no chords, and as $G$ is subcubic, each vertex of $C$ is incident to at most one edge not on $C$.   Since $\lfloor \frac{5}{2}\rfloor=  2$, by symmetry we may assume that there are at most two edges inside $C$ that are incident to vertices on $C$ (recall that $G$ is assumed to be embedded in the plane).  If there is exactly one such edge, then $G$ has a cut-edge, contradicting Lemma \ref{cut-vertex}.  So, we have two such edges, which are in fact cut-edges, and by Lemma \ref{2-edge-cut}, these edges  share a common endpoint, say $u$, inside of $C$.  Now, $u$ is a 2-vertex, as otherwise it would be a cut-vertex with a cut-edge.  However, $u$ together with the vertices of $C$ has either a triangle or a 4-cycle containing a 2-vertex, contradicting Lemma \ref{NoTriangle} or the above, respectively.  Thus, $G$ has no separating 4-cycle or 5-cycle.  
\end{proof}

\begin{lemma}\label{No4cycle}
$G$ has no 4-cycle.
\end{lemma}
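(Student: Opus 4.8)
The plan is to prove that $G$ has no 4-cycle by contradiction, building directly on the machinery already established. Suppose $x_0x_1x_2x_3$ is a 4-cycle in $G$. By Lemma \ref{separating} we already know two crucial facts: $G$ has no separating 4-cycle or 5-cycle, and $G$ has no 4-cycle containing a 2-vertex (this was proved as the first step of Lemma \ref{separating}). Combined with Lemma \ref{distance>=3} and Lemma \ref{NoTriangle}, this means that if a 4-cycle exists at all, every one of its vertices must be a 3-vertex, the cycle has no chords, and moreover it cannot be separating. Since the cycle is nonseparating, all the ``third'' edges leaving $x_0,x_1,x_2,x_3$ go to the same side (the outer region), so I may assume the cycle bounds a face or that its interior is empty of vertices. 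This puts me in a clean situation where each $x_i$ has exactly one third neighbor $y_i$ lying outside the cycle.

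First I would record the structural setup: by Lemma \ref{NoTriangle} the $y_i$ are not on the cycle, and adjacent third neighbors are distinct ($y_i \neq y_{i+1}$), though opposite ones $y_0,y_2$ or $y_1,y_3$ might coincide. Then, mimicking the reduction used in Lemma \ref{separating}, I would contract or delete the 4-cycle to form a smaller auxiliary multigraph $G'$. The natural choice is to delete all four cycle vertices $x_0,x_1,x_2,x_3$ and identify the four pendant edges appropriately — for instance adding edges between the $y_i$ so that $G'$ remains subcubic, planar, and of edge multiplicity at most two (here the remark following Lemma \ref{muledges} guarantees multiplicity stays bounded, and the no-separating-cycle hypothesis is what keeps planarity and the degree bounds under control). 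By minimality $G'$ has a good coloring, which I then pull back to a good partial coloring $\phi$ of $G$ in which the edges $x_iy_i$ inherit the colors forced by $G'$.

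The core of the argument is then to extend $\phi$ to the four cycle edges $x_0x_1, x_1x_2, x_2x_3, x_3x_0$ using the five available colors so that the result is a good coloring. This is a constraint-satisfaction step: each cycle edge must receive a color distinct from its two neighboring cycle edges and from the inherited color at each endpoint (proper edge-coloring), and additionally every adjacency $x_i y_i$ and $x_i x_{i+1}$ must respect the $|S_u \cap S_v| \le 2$ condition. Because each $x_i$ is a 3-vertex incident to exactly one pre-colored edge $x_iy_i$, the set $\sU_\phi(x_i)$ starts as a single color and grows to three as I color the cycle; the worst case is where some $y_i$ already has a full set of three colors clashing with what I want to assign. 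I would color the cycle greedily in order with $\alpha_0,\alpha_1,\alpha_2,\alpha_3$ (in the sense defined in Section \ref{sec:prelim}), at each step choosing a color that avoids the two forbidden colors at the current endpoint-neighbor and the one forbidden color from the previously colored cycle edge, and verifying the intersection bound against each $y_i$ — exactly the style of case analysis already carried out in Lemmas \ref{2-edge-cut}, \ref{NoTriangle}, and \ref{separating}.

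The main obstacle will be closing the cycle: the final edge $x_3x_0$ is constrained simultaneously by the colors on $x_2x_3$ and $x_0x_1$ (its cycle-neighbors), by the inherited colors at $y_0$ and $y_3$, and by the intersection conditions $|S_{x_3}\cap S_{y_3}|\le 2$ and $|S_{x_0}\cap S_{y_0}|\le 2$. With only five colors this can fail in a few extremal configurations, and the resolution — as in the proof of Lemma \ref{separating} — will be to go back and recolor one or two of the earlier cycle edges (a Kempe-style or local swap) to free up a color, using the fact established in Lemma \ref{separating} that no 4-cycle carries a 2-vertex to guarantee each $x_i$ genuinely has a third neighbor whose color set we can exploit. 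I expect the bookkeeping of these swap cases, rather than any single conceptual difficulty, to be the heart of the proof, and it is plausible the author defers the routine sub-cases or handles them by the symmetry of the 4-cycle under its dihedral automorphism group.
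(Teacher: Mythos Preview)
Your plan is essentially the paper's approach: delete the four cycle vertices, add edges among the $y_i$ to form a smaller subcubic planar multigraph, pull back a good coloring by minimality, and then extend to the four cycle edges by case analysis. Two points worth sharpening. First, the paper makes the specific choice $G' = G - \{x_0,x_1,x_2,x_3\} + y_0y_1 + y_2y_3$, pairing \emph{consecutive} third neighbors; this forces $\phi(x_0y_0)=\phi(x_1y_1)$ and $\phi(x_2y_2)=\phi(x_3y_3)$ and, crucially, gives the free constraints $\sU_\phi(y_0)\neq\sU_\phi(y_1)$ and $\sU_\phi(y_2)\neq\sU_\phi(y_3)$ from the goodness of the $G'$-coloring, which is exactly what makes the closing-the-cycle case analysis terminate. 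Second, your remark that opposite neighbors $y_0,y_2$ (or $y_1,y_3$) might coincide is not correct: by Lemmas~\ref{NoTriangle} and~\ref{separating} all four $y_i$ are distinct and off the cycle (coincidence would produce a separating 4-cycle), and this distinctness is needed for $G'$ to be well-formed.
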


\begin{proof}
Suppose that $x_0x_1x_2x_3$ is a 4-cycle in $G$.  By Lemma \ref{separating}, this cycle is a 4-face and as is shown in the proof of Lemma \ref{separating}, each $x_i$ is a 3-vertex.  As a result, we let $y_i$ denote the third neighbor of $x_i$.  By Lemmas \ref{NoTriangle} and \ref{separating}, the $y_i$'s are distinct and not on the 4-cycle.  Let $G' = G - \{x_0, x_1, x_2, x_3\} + y_0y_1 + y_2y_3$.  Observe that $G'$ is a subcubic planar multigraph with multigraph  with multiplicity at most two.  Thus, by the minimality of $G$, $G'$ has a good coloring.  We impose this good coloring onto $G$ by coloring $x_0y_0$ and $x_1y_1$ with the color assigned to $y_0y_1$, and coloring $x_2y_2$ and $x_3y_3$ with the color assigned to $y_2y_3$.  Let $\phi$ denote this good partial coloring of $G$.  

By symmetry, we may assume that either $y_0$ and $y_1$ are both 2-vertices, or that $y_0$ is a 3-vertex.  Now without loss of generality, assume $\sU_\phi(y_0) \subseteq \{1,2,3\}$ with $\phi(x_0y_0) = \phi(x_1y_1) = 1$.  Thus by the existence of $y_0y_1$ in $G'$, $\sU_\phi(y_1) \neq \{1,2,3\}$.  We proceed based on $\phi(x_2y_2)$, which up to relabeling, we may assume is in $\{1,2,4\}$.

\begin{case}
$\phi(x_2y_2) = \phi(x_3y_3) = 1$.
\end{case}

Suppose $\sU_\phi(y_3) \neq \{1,4,5\}$.  We color $x_0x_1$ and $x_1x_2$ with 2 and 3, respectively.   If $3 \notin \sU_\phi(y_2)$, we color $x_2x_3$ with either 4 or 5.  Otherwise, color $x_2x_3$ with a color from $\{4,5\}\setminus\sU_\phi(y_2)$.  In either case, let $\alpha$ denote the color used on $x_2x_3$, and we color $x_3x_0$ from $\{4,5\}\setminus\{\alpha\}$.  This yields a good coloring of $G$.

Thus, $\sU_\phi(y_3) = \{1,4,5\}$.  A similar argument holds when $\sU_\phi(y_1) \neq \{1,4,5\}$ by coloring $x_2x_3$ and $x_3x_0$ with 3 and 2, respectively.  So $\sU_\phi(y_1) = \{1,4,5\}$.  We now color $x_0x_1$ and $x_1x_2$ with 4 and 2, respectively.  If $2 \notin \sU_\phi(y_2)$, we color $x_2x_3$ with either 3 or 5.  Otherwise, we color $x_2x_3$ with a color from $\{3,5\}\setminus\sU_\phi(y_2)$.  In either case, let $\beta$ denote the color used on $x_2x_3$, and we color $x_3x_0$ from $\{3,5\}\setminus\{\beta\}$.  This yields a good coloring of $G$.

\begin{case}
$\phi(x_2y_2) = \phi(x_3y_3) = 2$.
\end{case}

The same argument as above yields $\sU_\phi(y_3) = \{2,4,5\}$.  By the existence of $y_2y_3$ in $G'$, $\sU_\phi(y_2) \neq \{2,4,5\}$.   Now $\sU_\phi(y_1) = \{1,4,5\}$, otherwise color the cycle in order with 5, 4, 5, 3.  We now color $x_2x_3$ and $x_3x_0$ with 1 and 4, respectively.  If $1 \notin \sU_\phi(y_2)$, we color $x_1x_2$ with either 3 or 5.  Otherwise, we color $x_1x_2$ with a color from $\{3,5\}\setminus\sU_\phi(y_2)$.  In either case, let $\alpha$ denote the color used on $x_1x_2$, and color $x_3x_0$ from $\{3,5\}\setminus\{\alpha\}$.  This yields a good coloring of $G$.

\begin{case}
$\phi(x_2y_2) = \phi(x_3y_3) = 4$.
\end{case}

We begin by coloring $x_0x_1, x_1x_2, x_3x_0$ with 2, 3, and 5, respectively.  Suppose $3 \notin \sU_\phi(y_2)$.  If $5 \notin \sU_\phi(y_3)$, color $x_2x_3$ with either 1 or 2.  Otherwise, color $x_2x_3$ with a color from $\{1,2\}\setminus\sU_\phi(y_3)$.  In either case, this yields a good coloring of $G$.  So $3 \in \sU_\phi(y_2)$, and by a similar argument, $5 \in \sU_\phi(y_3)$.

We now color $x_0x_1, x_1x_2, x_3x_0$ with 3, 2, and 5, respectively.  If $2 \notin \sU_\phi(y_2)$, color $x_2x_3$ with a color from $\{1,3\}\setminus\sU_\phi(y_3)$.  So $2 \in \sU_\phi(y_2)$ and $\sU_\phi(y_2) = \{2,3,4\}$.  Also $\sU_\phi(y_3) = \{1,4,5\}$, otherwise color the cycle in order with 3, 2, 1, 5.  This is a good coloring of $G$.

If $\sU_\phi(y_1) \neq \{1,4,5\}$, color the cycle in order with 4, 5, 3, 2, respectively.  Otherwise, color the cycle in order with 4, 2, 5, 3, respectively.  These yield a good colorings of $G$.

As we have exhausted all cases, this proves the lemma.
\end{proof}

In the proof of Lemma \ref{No4cycle}, we repeat the same argument several times, and we will continue to do so in much of the following.  Thus, for the sake of brevity we will replace this argument with a short statement.  Let $\phi$ be a good partial coloring of $G$, and let $xy$ be a colored edge in $G$ under $\phi$ such that $x$ is a 3-vertex incident to exactly one uncolored edge, call it $e_1$.  Let $\gamma$ be the color used on the colored edge incident to $x$ that is not $xy$, and let $\alpha$ and $\beta$ be colors such that $\{\alpha, \beta\} \cap \{\phi(xy), \gamma\} = \emptyset$.  Lastly, let $\{e_2,\dots, e_k\}$ and $\{e_1', \dots, e_{k'}'\}$ be two, possibly empty, collections of uncolored edges such that $\{e_2,\dots, e_k\} \cap \{e_1', \dots, e_{k'}'\} = \emptyset$.  We will say that we \emph{color} $e_1,e_2, \dots, e_k$ (\emph{and} $e_1', e_2', \dots, e_{k'}'$) \emph{from} $\{\alpha,\beta\}$ \emph{with respecto to} $\gamma$ \emph{and} $\sU_\phi(y)$ by coloring $e_1,e_2,\dots, e_k$ with $\alpha$ or $\beta$, and coloring $e_1', e_2',\dots, e_{k'}'$ with $\beta$ or $\alpha$, respectively.  They way $e_1$ is colored is as follows.  If $\gamma \notin \sU_\phi(y)$, we can color $e_1$ with either $\alpha$ or $\beta$.  Otherwise, we color $e_1$ with a color from $\{\alpha,\beta\}\setminus\sU_\phi(y)$.  By doing so and choosing $\alpha$ and $\beta$ carefully, this will extend $\phi$ to another good partial coloring of $G$.

\begin{lemma}\label{distance>=4}
The distance between any two 2-vertices is at least four.
\end{lemma}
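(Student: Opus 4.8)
The plan is to show that if two 2-vertices $u$ and $v$ are at distance exactly three, we can build a smaller auxiliary graph, color it by minimality, and extend back to $G$, contradicting the fact that $G$ is a counterexample. By Lemma \ref{distance>=3} we already know the distance is at least three, so it suffices to rule out distance exactly three. Let $u$ and $v$ be 2-vertices joined by a path $u\,a\,b\,v$, where $a$ and $b$ are the internal vertices. By Lemma \ref{delta} every vertex has degree at least $2$, and by Lemmas \ref{NoTriangle} and \ref{No4cycle} the graph has no triangles and no 4-cycles, so in particular $u,a,b,v$ are four distinct vertices, $u$ and $v$ are nonadjacent, and $a,b$ cannot share a common neighbor besides the obvious path structure.

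First I would set up the reduction. Since $u$ and $v$ are 2-vertices, $u$ has one neighbor other than $a$, say $u'$, and $v$ has one neighbor other than $b$, say $v'$. The intermediate vertices $a$ and $b$ may be 2- or 3-vertices. The natural move, following the style already established in Lemmas \ref{cut-vertex}--\ref{No4cycle}, is to delete the two low-degree vertices $u$ and $v$ (and possibly contract or reroute the path) to form a smaller subcubic planar multigraph $G'$ of edge multiplicity at most two; for instance, one can delete $u$ and $v$ and add the edge $u'v'$ (or $ab$, depending on the local structure) so that $G'$ has fewer vertices than $G$. By minimality, $G'$ admits a good coloring $\phi$, which we then import onto $G$ by assigning the colors of the new edges to the appropriate incident edges of $G$, leaving the edges $ua, ab, bv$ (and $uu', vv'$) to be colored or recolored.

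Next I would extend $\phi$ to a good coloring of $G$. The freedom here is large: $u$ and $v$ each contribute only two colors to their color sets, so the constraint $|\sU(u)\cap\sU(a)|\le 2$ and $|\sU(v)\cap\sU(b)|\le 2$ is automatically satisfied whenever $u$ and $v$ have degree $2$, since a set of size at most two intersects anything in at most two colors. Thus the only genuine constraints to check are propriety at $a$ and $b$ and, when $a$ or $b$ is a 3-vertex, the 2-intersection condition across the edges $ua,ab,bv$ relative to $a$'s and $b$'s third neighbors. I would argue, exactly in the spirit of Lemma \ref{extend} and the "color $e_1,\dots,e_k$ from $\{\alpha,\beta\}$ with respect to $\gamma$ and $\sU_\phi(y)$" shorthand introduced after Lemma \ref{No4cycle}, that with five colors available there are always enough free choices along the short path to complete the coloring. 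A short case split on the degrees of $a$ and $b$ (both 2, one 2, both 3) would organize the verification.

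The main obstacle will be the case where both $a$ and $b$ are 3-vertices: then each carries a third edge to a neighbor whose color set can be as large as three, and after importing $\phi$ the colors forced on $ua,ab,bv$ must simultaneously be proper at $a$ and $b$ and avoid creating a 3-element intersection between $\sU(a)$ (respectively $\sU(b)$) and its third neighbor's color set. Since $u$ and $v$ are 2-vertices we can absorb slack into their second edges $uu'$ and $vv'$, but we must choose the auxiliary edge added in $G'$ carefully so that the imported coloring does not already over-constrain the path; here I expect the distinctness guaranteed by the absence of triangles and 4-cycles (Lemmas \ref{NoTriangle}, \ref{No4cycle}) to be exactly what keeps the relevant neighbors distinct and the color sets from overlapping fatally. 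Once that case is dispatched, the remaining configurations follow by the easier counting above, and in every case we obtain a good coloring of $G$, contradicting the choice of $G$ and proving the lemma.
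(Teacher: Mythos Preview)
Your proposal is an outline, not a proof: you never actually carry out the extension, and the reduction you sketch has two concrete problems.

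First, a small point: by Lemma~\ref{distance>=3} the internal vertices $a$ and $b$ are both adjacent to 2-vertices, so they must be 3-vertices. Your case split on their degrees is unnecessary, and the only case is the one you flag as the ``main obstacle''.

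Second, and more seriously, your suggested reduction ``delete $u$ and $v$ and add $u'v'$'' need not produce a planar graph: $u'$ and $v'$ are arbitrary vertices of $G$ and there is no reason they lie on a common face after you remove $u$ and $v$. (Your parenthetical alternative ``or $ab$'' is not a reduction at all, since $ab$ is already present.) Even if planarity held, the information you import from $G'$ gives you no control over the colour sets $\sU(y_a)$ and $\sU(y_b)$ at the third neighbours of $a$ and $b$, and it is precisely those sets that obstruct the extension along $ua,\,ab,\,bv$.

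The paper's reduction is different and is the missing idea: delete the \emph{middle} vertices $a,b$ and add the edge $y_ay_b$ between their third neighbours. This is planar because $a$ and $b$ were adjacent, so their removal merges the two faces along $ab$ into a region containing both $y_a$ and $y_b$. The payoff is that in the good coloring of $G'$ the edge $y_ay_b$ forces $|\sU(y_a)\cap\sU(y_b)|\le 2$; after transferring its colour to both $ay_a$ and $by_b$, you may normalise $\sU(y_a)=\{1,2,3\}$ with that common colour equal to $1$, and then you \emph{know} $\sU(y_b)\neq\{1,2,3\}$. That single piece of information, together with your correct observation that the 2-intersection condition is automatic at the 2-vertices $u$ and $v$, is exactly what lets you colour $ab$ and $bv$ from $\{2,3\}$ and then finish $ua$ from $\{4,5\}$. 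Without that link between $y_a$ and $y_b$, the ``always enough free choices'' claim is not justified.
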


\begin{proof}
Now suppose $x_0x_1x_2x_3x_4x_5$ is a path in $G$ where $x_1$ and $x_4$ are 2-vertices.  By Lemma \ref{distance>=3}, $x_0, x_2, x_3$, and $x_5$ are 3-vertices.  Let $y_2$ and $y_3$ be the third neighbors of $x_2$ and $x_3$, respectively.  By Lemmas \ref{NoTriangle}, \ref{No4cycle}, and \ref{distance>=3}, $y_2$ and $y_3$ are distinct, not on this path, nonadjacent, and are 3-vertices.  Let $G' = G - \{x_2,x_3\} + y_2y_3$.  Observe that $G'$ is a subcubic planar multigraph with multiplicity at most two.  Thus, by the minimality of $G$, $G'$ has a a good coloring.  We impose this coloring onto $G$ by coloring $x_2y_2$ and $x_3y_3$ with the color used on $y_2y_3$.  Let $\phi$ denote this good partial coloring of $G$.

Let $\alpha := \phi(x_0x_1)$ and $\beta := \phi(x_4x_5)$.  Without loss of generality, suppose $\sU_\phi(y_2) = \{1,2,3\}$ with $\phi(x_2y_2) = \phi(x_3y_3) = 1$.  We may also suppose that $\beta \neq 2$.  By the existence of $y_1y_2$ in $G'$, $\sU_\phi(y_3) \neq \{1,2,3\}$.  Therefore, we color $x_2x_3$ and $x_3x_4$ with 3 and 2, respectively, and then color $x_1x_2$ with a color from $\{4,5\}\setminus\{\alpha\}$.  This yields a good coloring of $G$.
\end{proof}

\begin{lemma}\label{distance>=5}
If the boundary of a face in $G$ contains a pair of 2-vertices, then the distance on the boundary between them is at least five.
\end{lemma}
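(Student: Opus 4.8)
The plan is to reduce the claim to excluding a single configuration. Since $G$ has no cut-vertex (Lemma \ref{cut-vertex}), every face boundary is a cycle, and any boundary arc between two vertices is a path in $G$, so the boundary distance between two vertices on a common face is at least their distance in $G$. By Lemma \ref{distance>=4} the distance in $G$ between two $2$-vertices is at least four, hence the boundary distance is already at least four. Thus it suffices to rule out boundary distance exactly four. Suppose then that $x_0$ and $x_4$ are $2$-vertices on the boundary of a face $f$, joined by a boundary arc $x_0x_1x_2x_3x_4$ of length four. Because $x_0,x_4$ are $2$-vertices, both of their incident edges lie on $f$, so this arc is genuinely a geodesic and $x_1,x_2,x_3$ must all be $3$-vertices (a $2$-vertex among them would sit at distance at most three from $x_0$ or $x_4$, contradicting Lemma \ref{distance>=4}). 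Let $y_1,y_2,y_3$ be the third neighbors of $x_1,x_2,x_3$, and let $w,z$ be the second neighbors of $x_0,x_4$. Using Lemmas \ref{distance>=3}, \ref{NoTriangle}, \ref{No4cycle}, and \ref{distance>=4}, I would record that $w,z,y_1,y_2,y_3$ are all $3$-vertices, that $y_1,y_2,y_3$ are distinct and lie off the arc, and in particular that $y_2\ne y_3$ and $y_2\not\sim y_3$ (otherwise $y_2x_2x_3$ is a triangle or $y_2y_3x_3x_2$ is a $4$-cycle).

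Next I would mimic the reduction used in Lemma \ref{distance>=4}. Form $G' = G - \{x_2,x_3\} + y_2y_3$. Since $y_2,y_3$ are nonadjacent and distinct, $G'$ is a simple (hence multiplicity at most two) subcubic planar multigraph on fewer vertices, the edge $y_2y_3$ being routed through the region vacated by the edge $x_2x_3$. By minimality $G'$ has a good coloring $\phi'$. I would pull $\phi'$ back to $G$ by coloring $x_2y_2$ and $x_3y_3$ with the color of $y_2y_3$ and keeping all other colors, obtaining a good partial coloring $\phi$ of $G$ whose only uncolored edges are $x_1x_2$, $x_2x_3$, and $x_3x_4$. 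Normalizing, I would take the common color to be $1$ and, after relabeling, assume $\sU_\phi(y_2)=\{1,2,3\}$; the $2$-intersection bound enforced on the added edge $y_2y_3$ in $\phi'$ then gives $\sU_\phi(y_3)\ne\{1,2,3\}$. Note also $\phi(x_2y_2)=\phi(x_3y_3)=1$, so neither of the three remaining edges may receive color $1$.

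It then remains to extend $\phi$ by properly coloring $x_1x_2,x_2x_3,x_3x_4$ while respecting every $2$-intersection constraint. The crucial slack comes from $x_0$ and $x_4$: being $2$-vertices, their color sets have size at most two, so the constraints at the pairs $\{x_1,x_0\}$ and $\{x_3,x_4\}$ are automatically met, and this feeds flexibility into $x_1$ and $x_3$. The genuinely binding conditions are that the new colors avoid $1$ and the colors already present at $x_1$ and at $y_2,y_3$; that $\{x_2,x_3\}$ forces $\phi(x_1x_2)\ne\phi(x_3x_4)$ (since $1$ and $\phi(x_2x_3)$ already lie in both sets); that $\{x_2,y_2\}$ forces at least one of $\phi(x_1x_2),\phi(x_2x_3)$ into $\{4,5\}$; and that $\{x_3,y_3\}$ and $\{x_1,y_1\}$ each forbid creating a third common color. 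I would verify these by a short case analysis on the colors $\phi(x_0x_1)$, $\phi(x_1y_1)$ and on $\sU_\phi(y_3)$, invoking the ``color $e_1,\dots$ from $\{\alpha,\beta\}$ with respect to $\gamma$ and $\sU_\phi(y)$'' shorthand introduced after Lemma \ref{No4cycle}, in each case producing a valid assignment and hence a good coloring of $G$, a contradiction. I expect this last extension step to be the main obstacle: unlike in Lemma \ref{distance>=4}, where each deleted vertex abutted a $2$-vertex, here $x_2$ has two $3$-vertex neighbors ($x_1$ and $y_2$) whose intersection constraints must be balanced simultaneously, so the argument must carefully exploit the slack transmitted from the end $2$-vertices $x_0$ and $x_4$ through $x_1$ and $x_3$.
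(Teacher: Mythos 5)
Your reduction to the case of boundary distance exactly four is sound, and $G'=G-\{x_2,x_3\}+y_2y_3$ is a legitimate smaller subcubic planar graph. The gap is in the extension step, and it is not merely a matter of writing out the cases: with this choice of $G'$ the extension can genuinely fail. The uncolored path $x_1x_2x_3x_4$ is anchored at $x_1$, a $3$-vertex that already carries two colored edges and a fully constrained neighbour $y_1$; this is precisely the slack you had in Lemma \ref{distance>=4} (where both ends of the uncolored path were $2$-vertices) and no longer have here. Concretely, suppose the pulled-back coloring has $\phi(x_2y_2)=\phi(x_3y_3)=1$, $\sU_\phi(y_2)=\{1,2,3\}$, $\sU_\phi(y_3)=\{1,4,5\}$, $\phi(x_0x_1)=4$, $\phi(x_1y_1)=5$, $\sU_\phi(y_1)=\{2,4,5\}$, and the second edge at the $2$-vertex $x_4$ colored $2$; all of this is consistent with a good coloring of $G'$ (in particular $|\sU_\phi(y_2)\cap\sU_\phi(y_3)|=1$). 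Then properness at $x_1$ and $x_2$ together with the pair $(x_1,y_1)$ force $\phi(x_1x_2)=3$; the pair $(x_2,y_2)$ then forces $\phi(x_2x_3)\in\{4,5\}$; the pairs $(x_3,y_3)$ and $(x_2,x_3)$ forbid $\phi(x_3x_4)\in\{3,4,5\}$, while properness at $x_3$ and $x_4$ forbids $1$ and $2$. So $x_3x_4$ cannot be colored, and your case analysis hits a dead end rather than closing.

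The paper avoids this by choosing a different auxiliary graph. It takes the boundary path $x_0x_1\dots x_6$ with the $2$-vertices at $x_1$ and $x_5$, deletes all of $x_1,\dots,x_5$, and adds a new vertex $x$ joined to $y_2,y_3,y_4$ together with the edge $x_0x_6$. Pulling the coloring back makes $\phi(x_2y_2),\phi(x_3y_3),\phi(x_4y_4)$ three distinct colors, gives $\sU_\phi(y_3)\neq\{2,3,4\}$, and makes $\phi(x_0x_1)=\phi(x_5x_6)$; the uncolored path $x_1x_2x_3x_4x_5$ is then anchored at the $2$-vertices themselves, each carrying a single colored edge, and a short case analysis on $\alpha=\phi(x_0x_1)$ finishes. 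To salvage your reduction you would need an additional recoloring argument in $G'$ (e.g.\ changing the color of $x_1y_1$ or $x_0x_1$) in configurations like the one above, which amounts to redoing by hand what the paper's construction provides for free.
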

\begin{proof}
By Lemma \ref{distance>=4}, any face contradicting the statement has length at least eight and contains a path $x_0x_1x_2x_3x_4x_5x_6$ such that $x_1$ and $x_5$ are 2-vertices.  By Lemma \ref{distance>=4}, all other $x_i$ are 3-vertices, and so, for $j \in \{2,3,4\}$ we let $y_j$ be the third neighbor of $x_j$.   By Lemmas \ref{NoTriangle}, \ref{separating}, \ref{No4cycle}, and \ref{distance>=4},  $y_3,y_4, y_5$ are not on this path, distinct, pairwise nonadjacent, and are 3-vertices.

Let $G'$ be obtained from $G$ by removing $x_1, x_2, x_3, x_4, x_5$, adding a new vertex $x$, and adding the edges $xy_2, xy_3, xy_4, x_0x_6$.  Observe that $G'$ is a subcubic planar multigraph with multiplicity at most two.  Thus, by the minimality of $G$, $G'$ has a good coloring.  We impose this good coloring onto $G$ by coloring $x_iy_i$ with the same color used on $xx_i$, and coloring $x_0x_1$ and $x_5x_6$ with the same color used on $x_0x_6$.  Let $\phi$ denote this good partial coloring of $G$.

Without loss of generality assume $\phi(x_iy_i) = i$ for $i \in \{2,3,4\}$, and let $\alpha := \phi(x_0x_1) = \phi(x_5x_6)$.  By the construction of $G'$, $\sU_\phi(y_3) \neq \{2,3,4\}$.   Suppose $\alpha \in \{2,3,4\}$.  We then color $x_2x_3$ and $x_3x_4$ with 4 and 2, respectively.  Now color $x_4x_5$ from $\{1,5\}$ with respect to 2 and $\sU_\phi(y_4)$, and color $x_1x_2$ with a color from $\{1,5\}$ with respect to 4 and $\sU_\phi(y_2)$.  This yields a good coloring of $G$.

So without loss of generality, $\alpha = 1$.  Suppose $\sU_\phi(y_2) \neq \{2,4,5\}$.  By the construction of $G'$, $\sU_\phi(y_4) \neq \{2,3,4\}$.  So we color $x_3x_4$ and $x_4x_5$ with 2 and 3, respectively.  We then color $x_2x_3$ (and $x_1x_2$) from $\{4,5\}$ with respect to 2 and $\sU_\phi(y_3)$.  This yields a good coloring of $G$.

So $\sU_\phi(y_2) = \{2,4,5\}$, and by a symmetric argument $\sU_\phi(y_4) = \{2,4,5\}$.  We now color $x_1x_2, x_2x_3, x_4x_5$ with 4, 1, 3, respectively, and color $x_3x_4$ from $\{2,5\}$ with respect to 1 and $\sU_\phi(y_3)$.  This yields a good coloring of $G$.
\end{proof}

\section{Faces Without 2-Vertices}\label{struct2}

In this section, we show that if a face has a 2-vertex, then that face must have length at least eight.

\begin{lemma}\label{No2on5cycle}
Every vertex of a 5-cycle in $G$ is a 3-vertex.
\end{lemma}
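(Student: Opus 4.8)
The goal is to show that no $5$-cycle in $G$ can contain a $2$-vertex. Following the pattern of Lemmas~\ref{No4cycle} and \ref{distance>=4}, I would argue by contradiction: suppose $x_0x_1x_2x_3x_4$ is a $5$-cycle with $x_0$ a $2$-vertex. By Lemma~\ref{distance>=4} the distance between two $2$-vertices is at least four, so $x_1,x_2,x_3,x_4$ are all $3$-vertices, and by Lemma~\ref{separating} the $5$-cycle is nonseparating with no chords (Lemma~\ref{NoTriangle}), so each $x_i$ for $i\in\{1,2,3,4\}$ has a well-defined third neighbor $y_i$ off the cycle. Using Lemmas~\ref{NoTriangle}, \ref{No4cycle}, and \ref{distance>=4}, the vertices $y_1,y_2,y_3,y_4$ should be distinct, not on the cycle, pairwise nonadjacent, and $3$-vertices.

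The plan is to delete the cycle and splice the external edges together into a smaller multigraph. Concretely I would form $G'$ from $G$ by removing $x_0,x_1,x_2,x_3,x_4$ and adding edges that pair up the $y_i$'s (for instance $y_1y_2$ and $y_3y_4$), using Lemma~\ref{muledges} to guarantee edge multiplicity stays at most two. Then $G'$ is a subcubic planar multigraph of edge multiplicity at most two with fewer vertices, so by minimality it has a good coloring. I would pull this coloring back to $G$ by coloring each $x_iy_i$ with the color of the corresponding added edge, forcing $\sU_\phi(y_1),\sU_\phi(y_2)$ to share the paired color (and similarly for $y_3,y_4$), and crucially recording that the \emph{existence} of the added edge $y_1y_2$ in $G'$ forces $\sU_\phi(y_1)\neq\sU_\phi(y_2)$ (and likewise $\sU_\phi(y_3)\neq\sU_\phi(y_4)$). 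This leaves the five cycle edges $x_0x_1,x_1x_2,x_2x_3,x_3x_4,x_4x_0$ uncolored, to be colored in order from five colors.

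With four of the five cycle vertices being $3$-vertices, the main work is a case analysis on the colors that the pulled-back coloring $\phi$ forces at the $y_i$'s. I would normalize by symmetry (the cycle has a reflective symmetry fixing $x_0$, swapping $x_1\leftrightarrow x_4$ and $x_2\leftrightarrow x_3$), assume without loss of generality that the paired colors are, say, $\phi(x_1y_1)=\phi(x_2y_2)=1$ and likewise name the color forced on the $x_3,x_4$ side, and then color the cycle in order using the ``color $e_1,\dots,e_k$ from $\{\alpha,\beta\}$ with respect to $\gamma$ and $\sU_\phi(y)$'' shorthand introduced after Lemma~\ref{No4cycle}. Since $x_0$ is only a $2$-vertex, the two cycle edges at $x_0$ have more freedom, which is what should make the count work out. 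At each $3$-vertex $x_i$ the constraint is that the two cycle colors together with $i=\phi(x_iy_i)$ give $|\sU_\phi(x_i)\cap\sU_\phi(y_i)|\le 2$, which is a mild local condition easily met with a five-color palette.

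The hard part will be the same bookkeeping obstacle that makes Lemma~\ref{No4cycle} long: a handful of stubborn subcases where $\sU_\phi(y_i)$ is forced to be a specific triple (e.g. $\{1,4,5\}$) simultaneously at several $y_i$'s, so that the greedy ``color from $\{\alpha,\beta\}$'' step gets blocked at the last edge of the cycle. In those subcases I expect to need either a recoloring move (swapping a color along the cycle, as in Case~3 of Lemma~\ref{No4cycle}) or an appeal to the $\sU_\phi(y_1)\neq\sU_\phi(y_2)$ constraint coming from the added edges to break the symmetry and free up a color. I would handle these by starting the cycle coloring at an $x_0$-incident edge (exploiting that $x_0$ imposes no external constraint) and propagating around; if a conflict arises only at the closing edge $x_4x_0$, a single transposition of the two colors used near $x_0$ should resolve it, since $x_0$'s third neighbor does not exist.
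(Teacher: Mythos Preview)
Your proposal is correct and follows essentially the same approach as the paper: delete the five cycle vertices, add the edges $y_1y_2$ and $y_3y_4$ to form $G'$, pull back a good coloring so that $\phi(x_1y_1)=\phi(x_2y_2)$ and $\phi(x_3y_3)=\phi(x_4y_4)$, exploit the constraints $\sU_\phi(y_1)\neq\sU_\phi(y_2)$ and $\sU_\phi(y_3)\neq\sU_\phi(y_4)$ coming from the added edges, and then run a short case analysis to color the five cycle edges using the extra freedom at the $2$-vertex $x_0$. The paper normalizes slightly differently (fixing $\sU_\phi(y_3)=\{1,2,3\}$ and splitting on whether $\phi(x_1y_1)\in\{1,2\}$ or $\phi(x_1y_1)=4$), but the substance is the same; one small correction is that your claim of pairwise nonadjacency of the $y_i$ is a hair stronger than what the paper asserts (it allows the possibility $y_1y_4\in E(G)$), though this has no bearing on the construction of $G'$ or the subsequent argument.
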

\begin{proof}
By Lemma \ref{separating}, it suffices to consider 5-faces.  Suppose on the contrary that $x_0x_1x_2x_3x_4$ is a 5-face in $G$ and $x_0$ is a $2$-vertex.   Lemma \ref{distance>=4} implies that each $x_i$ other than $x_0$ has a third neighbor $y_i$.   By Lemmas \ref{NoTriangle}, \ref{separating} and \ref{No4cycle}, these $y_i$ are distinct, not on our cycle and pairwise nonadjacent except for possibly $y_1y_4$.  Furthermore, each $y_i$ is a 3-vertex by Lemma \ref{distance>=4}.

Let $G' = G - \{x_0,x_1, x_2, x_3, x_4\} + y_1y_2 + y_3y_4$.  Observe that $G'$ is a subcubic planar multigraph with multiplicity at most two.  Thus, by the minimality of $G$, $G'$ has a good coloring.  We impose this coloring onto $G$ by coloring $x_1y_1$ and $x_2y_2$ with the color used on $y_1y_2$, and coloring $x_3y_3$ and $x_4y_4$ with the color used on $y_3y_4$.  Let $\phi$ denote this good partial coloring of $G$.  

Without loss of generality, suppose $\sU_\phi(y_3) = \{1,2,3\}$ with $\phi(x_3y_3) = \phi(x_4y_4) = 1$.  By the construction of $G'$, $\sU_\phi(y_4) \neq \{1,2,3\}$. 

\begin{case}
$\phi(x_1y_1) = \phi(x_2y_2) \in \{1,2\}$.
\end{case}

Suppose $\sU_\phi(y_2) \neq \{1,4,5\}$.  Color $x_0x_1, x_3x_4, x_4x_0$ with 3, 3, 2, respectively.  We then color $x_1x_2$ (and $x_2x_3$) from $\{4,5\}$ with respect to 3 and $\sU_\phi(y_1)$.  This yields a good coloring of $G$.

So $\sU_\phi(y_2) = \{1,4,5\}$, and by the contruction of $G$, $\sU_\phi(y_1) \neq \{1,4,5\}$.  Now $\sU_\phi(y_4) = \{1,4,5\}$, otherwise color the cycle in order with 4, 5, 3, 4, 5.  We then color the cycle in order with 4, 5, 3, 4, 2.  These are good colorings of $G$ and prove the case.

\begin{case}
$\phi(x_1y_1) = \phi(x_2y_2) = 4$.
\end{case}

Suppose $\sU_\phi(y_1) \neq \{1,2,4\}$.  Color $x_2x_3, x_3x_4, x_4x_0$ with 5, 2, 3, respectively.  We then color $x_1x_2$ (and $x_0x_1$) from $\{1,2\}$ with respect to 5 and $\sU_\phi(y_2)$.  This yields a good coloring of $G$.

So $\sU_\phi(y_1) = \{1,2,4\}$.  Color $x_0x_1, x_1x_2, x_2x_3, x_3x_4$ with 3, 1, 2, 3, respectively.  We then color $x_4x_0$ from $\{4,5\}$ with respect to 3 and $\sU_\phi(y_4)$.  This yields a good coloring of $G$.
\end{proof}

\begin{lemma}\label{No2on6cycle}
Every vertex of a 6-cycle in $G$ is a 3-vertex. 
\end{lemma}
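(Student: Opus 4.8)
The plan is to mirror the proof of Lemma~\ref{No2on5cycle}, adapting the reduction to a six-cycle. Suppose for contradiction that $C = x_0 x_1 x_2 x_3 x_4 x_5$ is a $6$-cycle of $G$ carrying a $2$-vertex. First I would observe that $C$ carries only one $2$-vertex: any two vertices of $C$ are at distance at most $3$ in $G$, so two $2$-vertices on $C$ would violate Lemma~\ref{distance>=4}. Hence, relabeling, $x_0$ is the unique $2$-vertex and $x_1, \dots, x_5$ are $3$-vertices with respective third neighbors $y_1, \dots, y_5$.

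Before building the auxiliary graph I would record the local structure. By Lemmas~\ref{NoTriangle} and~\ref{No4cycle}, $C$ is chordless and no $y_i$ lies on $C$. Next I would show $C$ bounds a face. If $C$ were separating, then since $x_0$ sends no edge off $C$ while $x_1, \dots, x_5$ send exactly one each, one side of $C$ meets at most $\lfloor 5/2\rfloor = 2$ of these five edges. A side meeting exactly one such edge yields a cut-edge, contradicting Lemma~\ref{cut-vertex}; a side meeting exactly two yields a $2$-edge-cut whose edges, by Lemma~\ref{2-edge-cut} and the absence of cut-vertices, share a single $2$-vertex $u$ lying on that side. Then $u$ together with a sub-arc of $C$ forms a triangle, a $4$-cycle, or a $5$-cycle containing the $2$-vertex $u$, contradicting Lemma~\ref{NoTriangle}, Lemma~\ref{No4cycle}, or Lemma~\ref{No2on5cycle} respectively. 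Hence $C$ is a face and the $y_i$ appear in the cyclic order $y_1, \dots, y_5$ along the region it bounds. With $C$ a face I would then check the $y_i$ are pairwise distinct: coincidences other than $y_1 = y_4$ or $y_2 = y_5$ create a triangle or $4$-cycle, while $y_1 = y_4$ (or $y_2 = y_5$) creates a separating $5$-cycle such as $x_1 x_2 x_3 x_4 y_1$ enclosing $y_2$ and $y_3$, contradicting Lemma~\ref{separating}. Likewise $y_1 y_5 \notin E(G)$, since otherwise $x_0 x_1 y_1 y_5 x_5$ is a $5$-cycle through the $2$-vertex $x_0$, contradicting Lemma~\ref{No2on5cycle}. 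Finally $y_1, y_2, y_4, y_5$ are $3$-vertices (each is within distance $3$ of $x_0$, by Lemma~\ref{distance>=4}), whereas $y_3$, at distance $4$, may be a $2$-vertex, which only helps.

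Now I would reduce. Let $G'$ be obtained from $G$ by deleting $x_0, \dots, x_5$, adding a new vertex $x$ adjacent to $y_2, y_3, y_4$, and adding the edge $y_1 y_5$; this keeps $G'$ a subcubic planar multigraph of multiplicity at most two (planarity uses that $C$ is a face, so $y_1, \dots, y_5$ occur in this cyclic order along the vacated region), so by minimality $G'$ has a good coloring. Writing $p, q, r$ for the colors of $xy_2, xy_3, xy_4$ and $s$ for the color of $y_1 y_5$, I would impose this coloring on $G$ by setting $\phi(x_2 y_2) = p$, $\phi(x_3 y_3) = q$, $\phi(x_4 y_4) = r$, and $\phi(x_1 y_1) = \phi(x_5 y_5) = s$. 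This preserves every set $\sU_\phi(y_i)$, and it transfers two crucial facts from $G'$: the edge $y_1 y_5$ gives $\sU_\phi(y_1) \neq \sU_\phi(y_5)$, and the edges at $x$ give $|\{p,q,r\} \cap \sU_\phi(y_i)| \le 2$ for $i \in \{2,3,4\}$. The only edges left uncolored are the six edges of $C$.

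It remains to color the six edges of $C$, and this is where the work lies. The vertex $x_0$ is now a $2$-vertex, so its two incident edges are constrained only by properness; at each $x_i$ with $1 \le i \le 5$ I must choose the two incident cycle-edge colors so that $|\sU_\phi(x_i) \cap \sU_\phi(y_i)| \le 2$ and so that adjacent vertices along $C$ keep intersection at most two. I would color the edges in order around $C$ using the coloring device introduced after Lemma~\ref{No4cycle} (coloring an edge ``from $\{\alpha,\beta\}$ with respect to $\gamma$ and $\sU_\phi(y)$''), branching on the values of $p, q, r, s$ and on which of the sets $\sU_\phi(y_i)$ are full in the relevant colors. The reflection symmetry $x_1 \leftrightarrow x_5$, $x_2 \leftrightarrow x_4$ (fixing $x_0$ and $x_3$) halves the cases, and the transferred inequalities $\sU_\phi(y_1) \neq \sU_\phi(y_5)$ and $|\{p,q,r\} \cap \sU_\phi(y_i)| \le 2$ are exactly what break the worst configurations. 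I expect the main obstacle to be this final case analysis: with five $3$-vertices on $C$ simultaneously imposing intersection constraints against the fixed triples $\sU_\phi(y_i)$, the tight subcases (several $\sU_\phi(y_i)$ sharing colors) must be cleared by recoloring already-placed cycle edges, just as in the proof of Lemma~\ref{No4cycle}.
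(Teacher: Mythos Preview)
Your structural preliminaries (uniqueness of the $2$-vertex on $C$, chordlessness, the separating-cycle argument, distinctness of the $y_i$, and the check that $y_1y_5\notin E(G)$) match the paper's Case~1 and the opening of Case~2 essentially verbatim. The genuine divergence is in the reduction. The paper deletes $x_0,\dots,x_5$ and adds the two edges $y_1y_2$ and $y_4y_5$; it then imposes the coloring so that $\phi(x_1y_1)=\phi(x_2y_2)$ and $\phi(x_4y_4)=\phi(x_5y_5)$, leaving $x_3y_3$ to be coloured separately. Your reduction instead adds a star at $y_2,y_3,y_4$ together with the edge $y_1y_5$, forcing $\phi(x_1y_1)=\phi(x_5y_5)$ and making $\phi(x_2y_2),\phi(x_3y_3),\phi(x_4y_4)$ three pairwise distinct colours with $\{p,q,r\}\not\subseteq\sU_\phi(y_i)$ for $i\in\{2,3,4\}$. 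Both constructions are legitimate subcubic planar multigraphs of multiplicity at most two, and both respect the reflection $x_1\leftrightarrow x_5$, $x_2\leftrightarrow x_4$. What the paper's choice buys is fewer distinct pendant colours (at most three, since two pairs coincide), which keeps the case split on $\phi(x_1y_1)$ versus $\phi(x_4y_4)$ small and is the same pairing device used in Lemmas~\ref{No2on5cycle} and~\ref{No2on7face}. What your choice buys is the stronger ``not all of $\{p,q,r\}$'' constraint at each of $y_2,y_3,y_4$, analogous to the star constraint exploited later in Lemmas~\ref{No5-5face} and~\ref{No5-6face}; the price is that up to four distinct pendant colours can occur, so the branching on $(s,p,q,r)$ is wider before the $\sU_\phi(y_i)$ constraints bite. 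Either route should close, but note that the case analysis you defer is, in the paper, the entire content of the lemma (several pages of subcases); your sketch is a plan rather than a proof, and the paper's pairing reduction is the one whose completion has actually been written out.
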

\begin{proof}
Suppose that $G$ has a $6$-cycle $C$ given by $x_0x_1x_2x_3x_4x_5$ on which $x_0$ is a 2-vertex.  By Lemma \ref{distance>=5}, $x_0$ is the only 2-vertex of $C$. 

\setcounter{case}{0}
\begin{case}
$C$ is a separating 6-cycle.
\end{case}

By Lemmas \ref{NoTriangle}, \ref{separating} and \ref{No4cycle}, $C$ has no chords.  As $G$ is subcubic, each vertex of $C$ is incident to at most one edge not on $C$.   Since $\lfloor \frac{5}{2}\rfloor =  2$, by symmetry we may assume that there are at most two edges inside $C$ that are incident to vertices on $C$ (recall that $G$ is assumed to be embedded in the plane).  If there is exactly one such edge, then $G$ has a cut-edge, contradicting Lemma \ref{cut-vertex}.  So, we  have two such edges, and by Lemma \ref{2-edge-cut} these edges  share a common endpoint, say $u$, inside of $C$.  Now, $u$ is a 2-vertex, else it is a cut-vertex with a cut-edge.  However, $u$ together with the vertices of $C$ contains either a triangle, a 4-cycle, or a 5-cycle containing a 2-vertex, contradicting Lemmas \ref{NoTriangle}, \ref{No4cycle}, \ref{separating}, or \ref{No2on5cycle}, respectively.  

\begin{case}
$C$ is not a separating 6-cycle.
\end{case}

Recall that $G$ is assumed to be embedded into the plane.  Thus $C$ must be the boundary of a 6-face.  As mentioned above, each $x_i$, other than $x_0$, is a 3-vertex and so has a third neighbor $y_i$.  We claim that these $y_i$'s are distinct, pairwise disjoint and not on $C$.  Indeed, if any $y_i$ was on $C$, we would create either a triangle or 4-cycle, contradicting Lemmas \ref{NoTriangle} and \ref{No4cycle}.  For $i \in \{1,2,3,4\}$, if $y_i = y_{i+1}$, we have a triangle contradicting Lemma \ref{NoTriangle}.  For $i \in \{1,2,3,5\}$ taken modulo 5, if $y_i = y_{i+2}$, we have a 4-cycle contradicting Lemma \ref{No4cycle}.  For $i \in \{1,2\}$, if $y_i = y_{i+3}$, then $y_ix_ix_{i+1}x_{i+2}x_{i+3}y_{i+3}$ is a separating 5-cycle contradicting Lemma \ref{separating}.  Thus, the $y_i$'s are distinct.  For $i \in \{1,2,3,4\}$, if $y_iy_{i+1} \in E(G)$, we have a 4-cycle contradicting Lemma \ref{No4cycle}.  For $i \in \{1,2,3\}$ if $y_iy_{i+2} \in E(G)$, we have a separating 5-cycle contradicting Lemma \ref{separating}.  If $y_5y_1 \in E(G)$, then $y_1x_1x_0x_5y_5y_1$ is a 5-cycle containing a 2-vertex contradicting Lemma \ref{No2on5cycle}.  For $i \in \{1,2\}$ if $y_iy_{i+3} \in E(G)$, then $y_ix_ix_{i+1}x_{i+2}x_{i+3}y_{i+3}y_i$ is a separating 6-cycle contradicting Case 1.  Thus, the $y_i$'s are pairwise disjoint.  Furthermore, by Lemma \ref{distance>=4}, the only possible 2-vertex amongst the $y_i$'s is $y_3$.

Now, let $G'$ be the plane graph formed from $G$ by removing $x_0, x_1, x_2, x_3, x_4, x_5$, and adding the edges $y_1y_2$ and $y_4y_5$.   Observe that $G'$ is a subcubic planar multigraph with multiplicity at most two.  Thus, by the minimality of $G$, it has a good coloring.  We can impose this coloring onto $G$ by coloring $x_1y_1$ and $x_2y_2$ with the color used on $y_1y_2$, and coloring $x_4y_4, x_5y_5$ with the color used on $y_4y_5$.  Call this good partial coloring of $G$, $\phi$.  

Without loss of generality, assume $\phi(x_4y_4) = \phi(x_5y_5) = 1$ and $\sU_\phi(y_4) = \{1,2,3\}$.  By the existence of $y_4y_5$ in $G'$, $\sU_\phi(y_5) \neq \{1,2,3\}$.  Let $\alpha := \phi(x_3y_3)$.  

\begin{subcase}
$\phi(x_1y_1) = \phi(x_2y_2) = 1$.
\end{subcase}

Without loss of generality, we may assume that $\alpha \neq \{2,5\}$.  Suppose $\sU_\phi(y_3) \neq \{\alpha, 2, 5\}$.  Color $x_2x_3, x_3x_4, x_4x_5, x_5x_0$ with 2, 5, 3, 2, respectively.  We then color $x_1x_2$ from $\{3,4\}$ with respect to 2 and $\sU_\phi(y_2)$.  Let $\beta \in \{3,4\}$ denote the color used on $x_1x_2$.  We then color $x_0x_1$ from $\{3,4,5\}\setminus\{\beta\}$ with respect to $\beta$ and $\sU_\phi(y_1)$.  This yields a good coloring of $G$.

So $\sU_\phi(y_3) = \{\alpha, 2, 5\}$.  If $\alpha = 1$, color $x_2x_3$ and $x_3x_4$ with 4 and 5, respectively.  We then color $x_1x_2$ from $\{2,5\}$ with respect to 4 and $\sU_\phi(y_2)$.  Let $\gamma$ denote the color used on $x_1x_2$, and color $x_0x_1$ from $\{2,3,5\}\setminus\{\gamma\}$ with respect to $\gamma$ and $\sU_\phi(y_1)$.  We then color $x_4x_5$ and $x_5x_0$ properly from $\{2,3\}$.  This yields a good coloring of $G$.

So $\alpha \in \{3,4\}$, and there exists $\oalpha$ such that $\{\alpha, \oalpha\} = \{3,4\}$.  Color $x_2x_3$ and $x_3x_4$ with $\oalpha$ and 5, respectively.  We then color $x_1x_2$ from $\{2,5\}$ with respect to $\oalpha$ and $\sU_\phi(y_2)$.  Let $\gamma \in \{2,5\}$ denote the color used on $x_1x_2$, and color $x_0x_1$ from $\{\alpha,2,5\}\setminus\{\gamma\}$ with respect to $\gamma$ and $\sU_\phi(y_1)$.  We then color $x_4x_5$ and $x_5x_0$ properly from $\{2,3\}$.  This yields a good coloring of $G$.

\begin{subcase}
$\phi(x_1y_1) = \phi(x_2y_2) = 2$.
\end{subcase}

\begin{subsubcase}
$\alpha \in \{1,2\}$.
\end{subsubcase}

Suppose $\sU_\phi(y_3) \neq \{\alpha, 4, 5\}$.  Color $x_2x_3, x_3x_4, x_4x_5, x_5x_0$ with 5, 4, 3, 2, respectively.  Color $x_1x_2$ from $\{1,3\}$ with respect to 5 and $\sU_\phi(y_2)$.  Let $\beta$ denote the color used on $x_1x_2$.  We then color $x_0x_1$ from $\{1,3,4\}\setminus\{\beta\}$ with respect to $\beta$ and $\sU_\phi(y_1)$.  This yields a good coloring of $G$.

So $\sU_\phi(y_3) = \{\alpha, 4, 5\}$.  Suppose $\sU_\phi(y_1) \neq \{2,4,5\}$.  Color $x_2x_3, x_4x_5, x_5x_0$ with 3, 2, 3, respectively.  Color $x_1x_2$ (and $x_0x_1$ and $x_3x_4$) from $\{4,5\}$ with respect to 3 and $\sU_\phi(y_2)$.  This yields a good coloring of $G$.  

So $\sU_\phi(y_1) = \{2,4,5\}$.  Color $x_0x_1, x_2x_3, x_4x_5, x_5x_0$ with 1, 3, 2, 3, respectively.  We then color $x_1x_2$ (and $x_3x_4$) from $\{4,5\}$ with respect to 3 and $\sU_\phi(y_2)$.  This yields a good coloring of $G$.

\begin{subsubcase}
$\alpha = 3$.
\end{subsubcase}

Suppose $\sU_\phi(y_3) \neq \{3,4,5\}$.  Color $x_1x_2, x_4x_5, x_5x_0$ with 1, 3, 2, respectively.  Color $x_2x_3$ (and $x_3x_4$) from $\{4,5\}$ with respect to 1 and $\sU_\phi(y_2)$.  Let $\beta$ denote the color used on $x_2x_3$, and color $x_0x_1$ from $\{3,4,5\}\setminus\{\beta\}$ with respect to 1 and $\sU_\phi(y_1)$.  This yields a good coloring of $G$.

So $\sU_\phi(y_3) = \{3,4,5\}$.  Suppose $\sU_\phi(y_1) \neq \{2,4,5\}$.  Color $x_2x_3, x_3x_4, x_4x_5, x_5x_0$ with 1, 4, 2, 3, respectively.  Color $x_1x_2$ (and $x_0x_1$) from $\{4,5\}$ with respect to 1 and $\sU_\phi(y_2)$.  This yields a good coloring of $G$.

So $\sU_\phi(y_1) = \{2,4,5\}$.  Then $\sU_\phi(y_2) = \{1,2,3\}$, otherwise color the cycle in order with 4, 3, 1, 4, 2, 3.  This is a good coloring of $G$.  We then color $x_0x_1, x_1x_2, x_2x_3, x_4x_5$ with 3, 4, 1, 2, respectively.  We then color $x_5x_0$ (and $x_3x_4$) from $\{4,5\}$ with respect to 2 and $\sU_\phi(y_5)$.  This yields a good coloring of $G$.

\begin{subsubcase}
$\alpha \in \{4,5\}$.
\end{subsubcase}

Let $\oalpha$ be such that $\{\alpha, \oalpha\} = \{4,5\}$. Suppose $\sU_\phi(y_3) \neq \{3,4,5\}$.  Color $x_2x_3, x_3x_4, x_4x_5, x_5x_0$ with 3, $\oalpha$, 3, 2, respectively.  Now color $x_1x_2$ from $\{4,5\}$ with respect to 3 and $\sU_\phi(y_2)$.  Let $\beta$ denote the color used on $x_1x_2$.  We then color $x_0x_1$ from $\{1,4,5\}\setminus\{\beta\}$ with respect to $\beta$ and $\sU_\phi(y_1)$.  This yields a good coloring of $G$.

So $\sU_\phi(y_3) = \{3,4,5\}$.  Color $x_2x_3, x_3x_4, x_4x_5, x_5x_0$ with 1, $\oalpha$, 3, 2, respectively.  Now color $x_1x_2$ from $\{4,5\}$ with respect to 1 and $\sU_\phi(y_2)$.  Let $\gamma$ denote the color used on $x_1x_2$.  We then color $x_0x_1$ from $\{3,4,5\}\setminus\{\gamma\}$ with respect to $\gamma$ and $\sU_\phi(y_1)$.  This yields a good coloring of $G$.

\begin{subcase}
$\phi(x_1y_1) = \phi(x_2y_2) = 4$.
\end{subcase}

\begin{subsubcase}
$\alpha \in \{1,4,5\}$.
\end{subsubcase}

If $\alpha = 5$, suppose $\sU_\phi(y_3) \neq \{2,4,5\}$.  Color $x_2x_3, x_3x_4, x_4x_5, x_5x_0$ with 2, 4, 3, 2, respectively.  Now color $x_1x_2$ from $\{1,3\}$ with respect to 2 and $\sU_\phi(y_2)$.  Let $\beta$ denote the color used on $x_1x_2$, and color $x_0x_1$ from $\{1,3,5\}\setminus\{\beta\}$ with respect to $\beta$ and $\sU_\phi(y_1)$.  This yields a good coloring of $G$.  

If $\alpha \in \{1,4\}$, suppose $\sU_\phi(y_3) \neq \{\alpha, 2, 5\}$.  Color $x_2x_3, x_3x_4, x_4x_5, x_5x_0$ with 2, 5, 3, 2, respectively.  Now color $x_1x_2$ from $\{1,3\}$ with respecto to 2 and $\sU_\phi(y_2)$.  Let $\gamma$ denote the color used on $x_1x_2$, and color $x_0x_1$ from $\{1,3,5\}\setminus\{\gamma\}$ with respect to $\gamma$ and $\sU_\phi(y_1)$.  This yields a good coloring of $G$.

\begin{subsubcase}
$\alpha \in \{2,3\}$.
\end{subsubcase}

Let $\oalpha$ be such that $\{\alpha, \oalpha\} = \{2,3\}$.  Suppose $\sU_\phi(y_3) \neq \{2,3,5\}$.  Color $x_2x_3, x_3x_4, x_4x_5$ with $\oalpha$, 5, $\alpha$, $\oalpha$, respectively.  Now color $x_1x_2$ from $\{1,5\}$ with respect to $\oalpha$ and $\sU_\phi(y_2)$.  Let $\beta$ denote the color used on $x_1x_2$, and color $x_0x_1$ from $\{1,\alpha,5\}\setminus\{\beta\}$ with respect to $\beta$ and $\sU_\phi(y_1)$.  This yields a good coloring of $G$.

So $\sU_\phi(y_3) = \{2,3,5\}$.  Suppose $\sU_\phi(y_2) \neq \{1,\alpha,4\}$.  Color $x_1x_2, x_2x_3, x_3x_4, x_4x_5, x_5x_0$ with $\alpha$, 1, 5, $\oalpha$, $\alpha$, respectively.  We then color $x_0x_1$ from $\{\oalpha, 5\}$ with respect to $\alpha$ and $\sU_\phi(y_1)$.  This yields a good coloring of $G$.

So $\sU_\phi(y_2) = \{1, \alpha, 4\}$.  We then color the cycle in order with $\alpha$, 1, 5, 4, $\alpha$, $\oalpha$.  This is a good coloring of $G$.

This exhausts all cases and proves the lemma.
\end{proof}

\begin{lemma}\label{No2on7face}
Every vertex of a 7-face in $G$ is a 3-vertex.
\end{lemma}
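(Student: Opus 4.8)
The plan is to follow the template of Lemma \ref{No2on6cycle}, now applied to a 7-cycle, but since the present statement concerns only faces there is no separating-cycle case to dispose of. Suppose for contradiction that $C=x_0x_1x_2x_3x_4x_5x_6$ bounds a 7-face and that $x_0$ is a 2-vertex. Any two boundary vertices of a 7-face are at boundary-distance at most $\lfloor 7/2\rfloor=3<5$, so Lemma \ref{distance>=5} forces $x_0$ to be the \emph{only} 2-vertex of $C$; thus each $x_i$ with $i\in\{1,\dots,6\}$ is a 3-vertex with a third neighbor $y_i$. First I would check, exactly as in the 6-face analysis, that the $y_i$ are distinct, avoid $C$, and are pairwise nonadjacent: a coincidence $y_i=y_j$ or an adjacency $y_iy_j\in E(G)$ with $x_i,x_j$ at boundary-distance $d$ produces a short cycle (of length $d+2$ or $d+3$, some of them running through $x_0$), which is excluded by one of Lemmas \ref{NoTriangle}, \ref{No4cycle}, \ref{separating}, \ref{No2on5cycle}, \ref{No2on6cycle}; for example $y_3y_4\in E(G)$ gives a 4-cycle, while $y_1=y_6$ or $y_1y_6\in E(G)$ gives a 4- or 5-cycle through the 2-vertex $x_0$. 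By Lemma \ref{distance>=4}, among the $y_i$ only $y_3$ and $y_4$ (whose $x_i$ lie farthest from $x_0$) may themselves be 2-vertices.

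Next I would form $G'=G-\{x_0,\dots,x_6\}+y_1y_2+y_3y_4+y_5y_6$. Each new edge can be drawn alongside the corresponding edge $x_ix_{i+1}$ of $C$, so $G'$ is planar, and by Lemma \ref{muledges} it has edge multiplicity at most two; since $|V(G')|<|V(G)|$, minimality gives $G'$ a good coloring. I impose it on $G$ by coloring both $x_1y_1,x_2y_2$ with the color of $y_1y_2$, both $x_3y_3,x_4y_4$ with the color of $y_3y_4$, and both $x_5y_5,x_6y_6$ with the color of $y_5y_6$, obtaining a good partial coloring $\phi$ in which every spoke $x_iy_i$ is colored. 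As in the earlier lemmas, the presence of the added edges yields the key inequalities $|\sU_\phi(y_1)\cap\sU_\phi(y_2)|\le 2$, $|\sU_\phi(y_3)\cap\sU_\phi(y_4)|\le 2$, and $|\sU_\phi(y_5)\cap\sU_\phi(y_6)|\le 2$; since each such pair already shares its common spoke color, the two sets in each pair cannot both equal the same 3-element set. (If instead one prefers the closest analog of the 6-face argument, one leaves the central pair $y_3,y_4$ unpaired, adding only $y_1y_2$ and $y_5y_6$.)

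It then remains to extend $\phi$ by properly coloring the seven edges of $C$ so that the 2-intersection condition holds at every $x_i$, in particular $|\sU(x_i)\cap\sU(y_i)|\le 2$ and $|\sU(x_i)\cap\sU(x_{i+1})|\le 2$. I would organize this, as in Lemma \ref{No2on6cycle}, by fixing $\sU_\phi(y_j)=\{1,2,3\}$ for one reference spoke and splitting into cases according to the spoke colors $\phi(x_iy_i)$ and the sets $\sU_\phi(y_i)$, repeatedly using the ``color $e_1,\dots,e_k$ from $\{\alpha,\beta\}$ with respect to $\gamma$ and $\sU_\phi(y)$'' device together with the inequalities above; the flexibility at $x_0$ (a 2-vertex, hence governed by Lemma \ref{extend}) and at the possibly 2-valent $y_3,y_4$ provides the extra freedom needed to finish each branch.

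The main obstacle is precisely this last step: with seven cycle-edges to color and six spokes constraining them, the case analysis is strictly larger than the already lengthy 6-face analysis, and keeping it organized—choosing a coloring order around $C$ so that each $x_i$ has a free color avoiding the forbidden intersections, and exploiting the three inequalities to kill the extremal subcases—will be the delicate part. I expect no conceptual novelty beyond Lemma \ref{No2on6cycle}, only a more extensive bookkeeping of spoke-color patterns.
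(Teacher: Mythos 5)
Your setup coincides with the paper's: the same reduction to the auxiliary graph $G' = G - \{x_0,\dots,x_6\} + y_1y_2 + y_3y_4 + y_5y_6$, the same imposed partial coloring of the six spokes, and the same derived inequalities forbidding $\sU_\phi(y_{2i-1})$ and $\sU_\phi(y_{2i})$ from being the same $3$-set. One factual slip in the structural preamble: the $y_i$ are \emph{not} pairwise nonadjacent. A pair at boundary-distance $3$ (i.e.\ $y_1y_4$, $y_2y_5$, $y_3y_6$) would close a $6$-cycle, and plain $6$-cycles are not excluded by any lemma --- only $6$-cycles through a $2$-vertex are (Lemma \ref{No2on6cycle}); the paper explicitly lists these three pairs as the possible adjacencies. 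This does not damage your construction, since only consecutive pairs $y_1y_2, y_3y_4, y_5y_6$ are joined in $G'$ and those would give $4$-cycles, but the claim as stated is false.

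The genuine gap is that the entire substance of the lemma --- showing that $\phi$ always extends to the seven edges of $C$ --- is deferred to ``a more extensive bookkeeping'' that you do not carry out, and the expectation that it is routine is too optimistic. In the paper's Case 3 (spoke colors $\phi(x_3y_3)=2$, $\phi(x_5y_5)=\beta$), the greedy scheme of coloring around the cycle with the ``from $\{\alpha,\beta\}$ with respect to $\gamma$ and $\sU_\phi(y)$'' device can genuinely get stuck at $x_4x_5$; the paper must then pin down $\sU_\sigma(y_4)$ exactly, uncolor $x_3x_4$, rerun the greedy pass from the opposite end to extract a symmetric constraint, and only after forcing $\sU_\phi(y_2)$ and $\sU_\phi(y_5)$ to specific triples exhibit an explicit coloring. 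Without at least identifying where the greedy pass fails and how the three intersection inequalities rescue those extremal configurations, the proof is not complete: the claim that the extension always succeeds is precisely what needs to be demonstrated, and your proposal asserts rather than proves it.
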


\begin{proof}
Recall that $G$ is assumed to be embedded into the plane.  Suppose on the contrary that $G$ has a 7-face with boundary $x_0x_1x_2\dots x_6$ with $x_0$ being a 2-vertex.  By Lemma \ref{distance>=5}, each $x_i$ other than $x_0$ has a third neighbor $y_i \notin \{x_{i-1},x_{i+1}\}$ where $i$ is taken modulo 7.  Similarly to  Case 2 of Lemma \ref{No2on6cycle},  Lemmas \ref{NoTriangle}, \ref{No4cycle}, \ref{separating}, \ref{No2on5cycle} and \ref{No2on6cycle}, imply that the $y_i$'s are not on the 7-face, are distinct and the only possible adjacencies other than those on this face or $x_iy_i$, $i \in \{1,2,3,4,5,6\}$, are $y_1y_4, y_2y_5, y_3y_6$.  In particular, $y_2y_6, y_1y_5 \notin E(G)$ by Lemma \ref{No2on6cycle}.

Let $G'$ be obtained from $G$ by removing $x_0, x_1, \dots, x_6$ and adding the edges $y_1y_2, y_3y_4, y_5y_6$.  Observe that $G'$ is a subcubic planar multigraph with multiplicity at most two, and so by the minimality of $G$, it has a good coloring.  We impose this coloring onto $G$ by coloring $x_1y_1$ and $x_2y_2$ with the color used on $y_1y_2$, coloring $x_3y_3$ and $x_4y_4$ with the color used on $y_3y_4$, and coloring $x_5y_5$ and $x_6y_6$ with the color used on $y_5y_6$.  Without loss of generality assume $\phi(x_1y_1) = \phi(x_2y_2) = 1$.  

\setcounter{case}{0}
\begin{case}
$(\phi(x_3y_3), \phi(x_5y_5)) = (1,1)$.
\end{case}

We proceed in each of the following cases attempting to color our cycle in some order.  Color $x_1x_2$ with $\alpha_{12} \notin \sU_\phi(y_1)$.  We then color $x_2x_3$ with $\alpha_{23} \notin (\sU_\phi(y_2) \cup \{\alpha_{12}\})$.  For $i \in \{3,4,5,6\}$ taken modulo 7, we color $x_ix_{i+1}$ from $\{1,2,3,4,5\}\setminus\{1,\alpha_{(i-2)(i-1)}, \alpha_{(i-1)i}\}$ with respect to $\alpha_{(i-1)i}$ and $\sU_\phi(y_i)$, and let $\alpha_{i(i+1)}$ denote the color used on $x_ix_{i+1}$.

Thus, it only remains to color $x_0x_1$.  Since $\alpha_{12} \notin \sU_\phi(y_1)$, it suffices to color $x_0x_1$ with a color not in $\{\alpha_{60}, \alpha_{12}, \alpha_{23},1\}$.  This yields a good coloring of $G$.

\begin{case}
$(\phi(x_3y_3), \phi(x_5y_5)) = (1,2)$.
\end{case}

In this case, we begin by coloring $x_5x_6$ with $\alpha_{56} \notin (\sU_\phi(y_6) \cup \{1\})$.  We then color $x_4x_5$ from $\{1,2,3,4,5\}\setminus\{1,2,\alpha_{56}\}$ with respect to $\alpha_{56}$ and $\sU_\phi(y_5)$.  Let $\alpha_{45}$ denote the color used on $x_4x_5$.  We now color $x_2x_3, x_1x_2, x_0x_1, x_6x_0$ in this order, in a manner similar to the previous case.  This yields a good coloring of $G$.

\begin{case}
$(\phi(x_3y_3), \phi(x_5y_5)) = (2,\beta)$.
\end{case}

Up to relabeling, we may assume that $\beta \in \{1,3\}$.  In this case, we begin by coloring $x_1x_2$ with $\alpha_{12} \notin (\sU_\phi(y_1) \cup \{2\})$.  We then color $x_2x_3$ from $\{1,2,3,4,5\}\setminus\{1,2,\alpha_{12}\}$ with respect to $\alpha_{12}$ and $\sU_\phi(y_2)$. Let $\alpha_{23}$ denote the color used on $x_2x_3$, and color $x_3x_4$ from $\{1,2,3,4,5\}\setminus\{2,\alpha_{23},\beta\}$ with respect to $\alpha_{23}$ and $\sU_\phi(y_3)$.  This results in a good partial coloring of $G$.  Call it $\sigma$.

Suppose that we can continue our good partial coloring of $G$ by coloring $x_4x_5$ with some $\alpha_{45}$.  We now color $x_5x_6$ from $\{1,2,3,4,5\}\setminus\{2,\alpha_{45}, \beta\}$ with respect to $\alpha_{45}$ and $\sU_\sigma(y_5)$.  Let $\alpha_{56}$ denote the color used on $x_5x_6$.  We then color $x_6x_0$ from $\{1,2,3,4,5\}\setminus\{\alpha_{45}, \alpha_{56}, \beta\}$ with respect to $\alpha_{56}$ and $\sU_\sigma(y_6)$.  We end by coloring $x_0x_1$ with a color not in $\{1,\alpha_{12}, \alpha_{23}, \alpha_{60}\}$.  This yields a good coloring of $G$.

So we assume that we cannot extend $\sigma$ when  attempting to color $x_4x_5$.  As a result, $\alpha_{34} \in \sU_\sigma(y_4)$, otherwise we could color $x_4x_5$ with a color not in $\{2, \beta, \alpha_{23}, \alpha_{34}\}$.  Similarly, if $|\sU_\sigma(y_4) \cup \{\alpha_{23},\beta\}| \le 4$, we can color $x_4x_5$ with a color not in $\sU_\sigma(y_4) \cup \{\alpha_{23}, \beta\}$.   Thus, $y_4$ is a 3-vertex with $\sU_\sigma(y_4) = \{2,\alpha_{34}, \gamma\}$, where $\gamma$ is such that $\{1,2,3,4,5\} = \{2, \beta, \gamma, \alpha_{23}, \alpha_{34}\}$.

We now uncolor $x_3x_4$ and relabel our colors so that $\{\alpha_{34}, \gamma\} = \{\gamma_1, \gamma_2\}$.  So $\{2,\beta, \gamma_1, \gamma_2, \alpha_{23}\} = \{1,2,3,4,5\}$.  Suppose $\sU_\phi(y_3) \neq \{2, \beta, \alpha_{23}\}$.  We then color $x_3x_4$ with $\beta$, $x_5x_6$ with a color $\alpha_{56} \notin \sU_\phi(y_5) \cup \{2\}$, and $x_4x_5$ with a color $\alpha_{45} \in \{\gamma_1, \gamma_2\}\setminus\{\alpha_{56}\}$.  This yields a good partial coloring of $G$ that we can extend to $x_6x_0$ and $x_0x_1$ as in the previous cases.

So $\sU_\phi(y_3) = \{2,\beta,\alpha_{23}\}$, and in particular, $\beta \in \sU_\phi(y_3)$.  Now, if we attempt to color our cycle starting from $x_5x_6$ instead of $x_1x_2$, a symmetric argument implies that $1 \in \sU_\phi(y_4)$.  That is, $1 \in \{\gamma_1, \gamma_2\}$.  Since $\{2,\beta, \alpha_{23}, \gamma_1, \gamma_2\} = \{1,2,3,4,5\}$ and $\beta \in \{1,3\}$, this implies that $\beta = 3$.  Without loss of generality, we may assume $\gamma_1 = 1, \gamma_2 = 4, \alpha_{23} = 5$.  This implies that $\alpha_{12} \in \{3,4\}$.  

Suppose $\sU_\phi(y_2) \neq \{1,3,4\}$.  We still assume that $x_1x_2$ is colored with $\alpha_{12} \notin (\sU_\phi(y_1) \cup \{2\})$.  However, we recolor $x_2x_3$ with the color from $\{3,4\}\setminus\{\alpha_{12}\}$.  We then color $x_3x_4$ and $x_4x_5$ with 1 and 5, respectively.  This is a good partial coloring of $G$ from which we can color $x_5x_6, x_6x_0, x_0x_1$ as in the previous cases.

So $\sU_\phi(y_2) = \{1,3,4\}$, and by symmetry $\sU_\phi(y_5) = \{1,3,5\}$.  Color $x_1x_2, x_2x_3, x_3x_4, x_4x_5, x_5x_6$ with 5, 4, 1, 5, 2, respectively.  We then color $x_0x_1$ from $\{2,3\}$ with respect to 5 and $\sU_\phi(y_1)$, and color $x_6x_0$ from $\{1,4\}$ with respect to 2 and $\sU_\phi(y_6)$.  This yields a good coloring of $G$.
\end{proof}

\section{Adjacent Faces}\label{struct3}

By the lemmas in Section \ref{struct1}, every face in $G$ is a $5^+$-face.  In this section we show that if a face has length five, then it can only be adjacent to $7^+$-faces.  The proofs of these lemmas are simply detailed case analysis.  Thus, for the sake of readibility, we omit these details, which can be found in the Appendix.

\begin{figure}[h]
\centering
\begin{tikzpicture}[line cap=round,line join=round,>=triangle 45,x=0.75cm,y=0.75cm]
\clip(-10, -6) rectangle (10,6);
\draw [->] (-1.0,0.0) -- (1.0,0.0);
\draw (-6.0,5.62)-- (-6.0,3.62);
\draw (-6.0,3.62)-- (-7.902113032590307,2.238033988749895);
\draw (-9.804226065180615,2.85606797749979)-- (-7.902113032590307,2.238033988749895);
\draw (-7.902113032590307,2.238033988749895)-- (-7.175570504584947,0.001966011250105315);
\draw (-7.175570504584947,0.001966011250105315)-- (-4.824429495415054,0.001966011250105093);
\draw (-4.824429495415054,0.001966011250105093)-- (-4.097886967409693,2.2380339887498946);
\draw (-4.097886967409693,2.2380339887498946)-- (-5.999999999999999,3.62);
\draw (-2.195773934819386,2.85606797749979)-- (-4.097886967409693,2.238033988749895);
\draw (-4.824429495415054,0.001966011250105093)-- (-4.097886967409693,2.2380339887498946);
\draw (-6.000000000000001,-5.6160679774997915)-- (-6.000000000000001,-3.6160679774997897);
\draw (-6.000000000000001,-3.6160679774997897)-- (-7.902113032590307,-2.2341019662496855);
\draw (-9.804226065180615,-2.8521359549995795)-- (-7.902113032590307,-2.2341019662496855);
\draw (-7.902113032590307,-2.2341019662496855)-- (-7.175570504584947,0.0019660112501051485);
\draw (-7.175570504584947,0.0019660112501051485)-- (-4.824429495415054,0.001966011250104982);
\draw (-4.824429495415054,0.001966011250104982)-- (-4.097886967409693,-2.234101966249685);
\draw (-4.097886967409693,-2.234101966249685)-- (-6.0,-3.6160679774997897);
\draw (-2.1957739348193863,-2.8521359549995813)-- (-4.097886967409694,-2.2341019662496855);
\draw (-4.824429495415054,0.001966011250104982)-- (-4.097886967409693,-2.234101966249685);
\draw (2.195773934819386,2.85606797749979)-- (6,2);
\draw (6,-2)-- (2.1957739348193854,-2.8521359549995795);
\draw (9.804226065180615,2.85606797749979)-- (6,2);
\draw (6,-2)-- (9.804226065180615,-2.8521359549995813);
\draw (6.0,5.62)-- (6,2);
\draw (6.0,-2)-- (5.999999999999999,-5.6160679774997915);
\draw (-5.9,5.8) node[anchor=north west] {$y_2$};
\draw (6.1,5.8) node[anchor=north west] {$y_2$};
\draw (-3,3.6) node[anchor=north west] {$y_3$};
\draw (-3,-2) node[anchor=north west] {$y_5$};
\draw (-5.9,-5.1) node[anchor=north west] {$y_6$};
\draw (-9.7,-2) node[anchor=north west] {$y_7$};
\draw (-9.7,3.6) node[anchor=north west] {$y_1$};
\draw (9,3.6) node[anchor=north west] {$y_3$};
\draw (2.3,3.6) node[anchor=north west] {$y_1$};
\draw (9,-2) node[anchor=north west] {$y_5$};
\draw (6.1, -5.1) node[anchor=north west] {$y_6$};
\draw (2.3,-2) node[anchor=north west] {$y_7$};
\draw (-6.4,3.5) node[anchor=north west] {$x_2$};
\draw (-5,2.5) node[anchor=north west] {$x_3$};
\draw (-5.6,.6) node[anchor=north west] {$x_4$};
\draw (-7.3, .6) node[anchor=north west] {$x_0$};
\draw (-7.8,2.5) node[anchor=north west] {$x_1$};
\draw (-7.8, -1.8) node[anchor=north west] {$x_7$};
\draw (-6.4,-2.8) node[anchor=north west] {$x_6$};
\draw (-5,-1.8) node[anchor=north west] {$x_5$};
\draw (5.7,1.9) node[anchor=north west] {$u$};
\draw (5.7,-1.3) node[anchor=north west] {$v$};
\begin{scriptsize}
\draw [fill=black] (-6.0,3.62) circle (1.5pt);
\draw [fill=black] (-7.902113032590307,2.238033988749895) circle (1.5pt);
\draw [fill=black] (-7.175570504584947,0.001966011250105315) circle (1.5pt);
\draw [fill=black] (-4.824429495415054,0.001966011250105093) circle (1.5pt);
\draw [fill=black] (-4.097886967409693,2.2380339887498946) circle (1.5pt);
\draw [fill=black] (-6.0,5.62) circle (1.5pt);
\draw [fill=black] (6.0,2) circle (1.5pt);
\draw [fill=black] (6.0,-2) circle (1.5pt);
\draw [fill=black] (-9.804226065180615,2.85606797749979) circle (1.5pt);
\draw [fill=black] (-7.902113032590307,2.238033988749895) circle (1.5pt);
\draw [fill=black] (-7.902113032590307,2.238033988749895) circle (1.5pt);
\draw [fill=black] (-7.175570504584947,0.001966011250105315) circle (1.5pt);
\draw [fill=black] (-7.175570504584947,0.001966011250105315) circle (1.5pt);
\draw [fill=black] (-7.175570504584947,0.001966011250105315) circle (1.5pt);
\draw [fill=black] (-4.824429495415054,0.001966011250105093) circle (1.5pt);
\draw [fill=black] (-7.175570504584947,0.001966011250105315) circle (1.5pt);
\draw [fill=black] (-4.824429495415054,0.001966011250105093) circle (1.5pt);
\draw [fill=black] (-4.824429495415054,0.001966011250105093) circle (1.5pt);
\draw [fill=black] (-4.824429495415054,0.001966011250105093) circle (1.5pt);
\draw [fill=black] (-4.097886967409693,2.2380339887498946) circle (1.5pt);
\draw [fill=black] (-4.824429495415054,0.001966011250105093) circle (1.5pt);
\draw [fill=black] (-4.097886967409693,2.2380339887498946) circle (1.5pt);
\draw [fill=black] (-4.824429495415054,0.001966011250105093) circle (1.5pt);
\draw [fill=black] (-4.824429495415054,0.001966011250105093) circle (1.5pt);
\draw [fill=black] (-4.097886967409693,2.2380339887498946) circle (1.5pt);
\draw [fill=black] (-4.097886967409693,2.2380339887498946) circle (1.5pt);
\draw [fill=black] (-4.097886967409693,2.2380339887498946) circle (1.5pt);
\draw [fill=black] (-5.999999999999999,3.62) circle (1.5pt);
\draw [fill=black] (-4.097886967409693,2.2380339887498946) circle (1.5pt);
\draw [fill=black] (-5.999999999999999,3.62) circle (1.5pt);
\draw [fill=black] (-4.097886967409693,2.2380339887498946) circle (1.5pt);
\draw [fill=black] (-4.097886967409693,2.2380339887498946) circle (1.5pt);
\draw [fill=black] (-4.097886967409693,2.2380339887498946) circle (1.5pt);
\draw [fill=black] (-5.999999999999999,3.62) circle (1.5pt);
\draw [fill=black] (-4.097886967409693,2.2380339887498946) circle (1.5pt);
\draw [fill=black] (-4.097886967409693,2.2380339887498946) circle (1.5pt);
\draw [fill=black] (-5.999999999999999,3.62) circle (1.5pt);
\draw [fill=black] (-2.195773934819386,2.85606797749979) circle (1.5pt);
\draw [fill=black] (-4.097886967409693,2.238033988749895) circle (1.5pt);
\draw [fill=black] (-4.824429495415054,0.001966011250105093) circle (1.5pt);
\draw [fill=black] (-4.097886967409693,2.2380339887498946) circle (1.5pt);
\draw [fill=black] (-4.824429495415054,0.001966011250105093) circle (1.5pt);
\draw [fill=black] (-4.824429495415054,0.001966011250105093) circle (1.5pt);
\draw [fill=black] (-4.824429495415054,0.001966011250105093) circle (1.5pt);
\draw [fill=black] (-4.824429495415054,0.001966011250105093) circle (1.5pt);
\draw [fill=black] (-4.824429495415054,0.001966011250105093) circle (1.5pt);
\draw [fill=black] (-4.824429495415054,0.001966011250105093) circle (1.5pt);
\draw [fill=black] (-4.097886967409693,2.2380339887498946) circle (1.5pt);
\draw [fill=black] (-4.824429495415054,0.001966011250105093) circle (1.5pt);
\draw [fill=black] (-4.824429495415054,0.001966011250105093) circle (1.5pt);
\draw [fill=black] (-4.824429495415054,0.001966011250105093) circle (1.5pt);
\draw [fill=black] (-4.097886967409693,2.2380339887498946) circle (1.5pt);
\draw [fill=black] (-4.824429495415054,0.001966011250105093) circle (1.5pt);
\draw [fill=black] (-4.824429495415054,0.001966011250105093) circle (1.5pt);
\draw [fill=black] (-4.097886967409693,2.2380339887498946) circle (1.5pt);
\draw [fill=black] (-6.000000000000001,-5.6160679774997915) circle (1.5pt);
\draw [fill=black] (-6.000000000000001,-3.6160679774997897) circle (1.5pt);
\draw [fill=black] (-6.000000000000001,-3.6160679774997897) circle (1.5pt);
\draw [fill=black] (-7.902113032590307,-2.2341019662496855) circle (1.5pt);
\draw [fill=black] (-9.804226065180615,-2.8521359549995795) circle (1.5pt);
\draw [fill=black] (-7.902113032590307,-2.2341019662496855) circle (1.5pt);
\draw [fill=black] (-7.902113032590307,-2.2341019662496855) circle (1.5pt);
\draw [fill=black] (-7.175570504584947,0.0019660112501051485) circle (1.5pt);
\draw [fill=black] (-7.175570504584947,0.0019660112501051485) circle (1.5pt);
\draw [fill=black] (-4.824429495415054,0.001966011250104982) circle (1.5pt);
\draw [fill=black] (-4.824429495415054,0.001966011250104982) circle (1.5pt);
\draw [fill=black] (-4.097886967409693,-2.234101966249685) circle (1.5pt);
\draw [fill=black] (-4.097886967409693,-2.234101966249685) circle (1.5pt);
\draw [fill=black] (-6.0,-3.6160679774997897) circle (1.5pt);
\draw [fill=black] (-2.1957739348193863,-2.8521359549995813) circle (1.5pt);
\draw [fill=black] (-4.097886967409694,-2.2341019662496855) circle (1.5pt);
\draw [fill=black] (-4.824429495415054,0.001966011250104982) circle (1.5pt);
\draw [fill=black] (-4.097886967409693,-2.234101966249685) circle (1.5pt);
\draw [fill=black] (-7.175570504584947,0.0019660112501051485) circle (1.5pt);
\draw [fill=black] (-4.824429495415054,0.001966011250104982) circle (1.5pt);
\draw [fill=black] (-4.097886967409693,-2.234101966249685) circle (1.5pt);
\draw [fill=black] (-7.175570504584947,0.0019660112501051485) circle (1.5pt);
\draw [fill=black] (-7.175570504584947,0.0019660112501051485) circle (1.5pt);
\draw [fill=black] (-4.824429495415054,0.001966011250104982) circle (1.5pt);
\draw [fill=black] (-4.824429495415054,0.001966011250104982) circle (1.5pt);
\draw [fill=black] (-4.824429495415054,0.001966011250104982) circle (1.5pt);
\draw [fill=black] (-4.097886967409693,-2.234101966249685) circle (1.5pt);
\draw [fill=black] (-4.824429495415054,0.001966011250104982) circle (1.5pt);
\draw [fill=black] (-4.824429495415054,0.001966011250104982) circle (1.5pt);
\draw [fill=black] (-4.097886967409693,-2.234101966249685) circle (1.5pt);
\draw [fill=black] (-4.097886967409693,-2.234101966249685) circle (1.5pt);
\draw [fill=black] (-4.097886967409693,-2.234101966249685) circle (1.5pt);
\draw [fill=black] (-6.0,-3.6160679774997897) circle (1.5pt);
\draw [fill=black] (-4.097886967409693,-2.234101966249685) circle (1.5pt);
\draw [fill=black] (-4.097886967409693,-2.234101966249685) circle (1.5pt);
\draw [fill=black] (-4.097886967409693,-2.234101966249685) circle (1.5pt);
\draw [fill=black] (-6.0,-3.6160679774997897) circle (1.5pt);
\draw [fill=black] (-4.097886967409693,-2.234101966249685) circle (1.5pt);
\draw [fill=black] (-4.097886967409693,-2.234101966249685) circle (1.5pt);
\draw [fill=black] (-6.0,-3.6160679774997897) circle (1.5pt);
\draw [fill=black] (-4.824429495415054,0.001966011250104982) circle (1.5pt);
\draw [fill=black] (-4.824429495415054,0.001966011250104982) circle (1.5pt);
\draw [fill=black] (-4.824429495415054,0.001966011250104982) circle (1.5pt);
\draw [fill=black] (-4.824429495415054,0.001966011250104982) circle (1.5pt);
\draw [fill=black] (-4.824429495415054,0.001966011250104982) circle (1.5pt);
\draw [fill=black] (-4.824429495415054,0.001966011250104982) circle (1.5pt);
\draw [fill=black] (-4.097886967409693,-2.234101966249685) circle (1.5pt);
\draw [fill=black] (-4.824429495415054,0.001966011250104982) circle (1.5pt);
\draw [fill=black] (-4.824429495415054,0.001966011250104982) circle (1.5pt);
\draw [fill=black] (-4.824429495415054,0.001966011250104982) circle (1.5pt);
\draw [fill=black] (-4.097886967409693,-2.234101966249685) circle (1.5pt);
\draw [fill=black] (-4.824429495415054,0.001966011250104982) circle (1.5pt);
\draw [fill=black] (-4.824429495415054,0.001966011250104982) circle (1.5pt);
\draw [fill=black] (-4.097886967409693,-2.234101966249685) circle (1.5pt);
\draw [fill=black] (2.1957739348193854,-2.8521359549995795) circle (1.5pt);
\draw [fill=black] (5.999999999999999,-5.6160679774997915) circle (1.5pt);
\draw [fill=black] (9.804226065180615,-2.8521359549995813) circle (1.5pt);
\draw [fill=black] (9.804226065180615,2.85606797749979) circle (1.5pt);
\draw [fill=black] (6.0,5.62) circle (1.5pt);
\draw [fill=black] (2.195773934819386,2.85606797749979) circle (1.5pt);
\draw [fill=black] (2.195773934819386,2.85606797749979) circle (1.5pt);
\draw [fill=black] (2.1957739348193854,-2.8521359549995795) circle (1.5pt);
\draw [fill=black] (6.0,5.62) circle (1.5pt);
\draw [fill=black] (5.999999999999999,-5.6160679774997915) circle (1.5pt);
\draw [fill=black] (9.804226065180615,2.85606797749979) circle (1.5pt);
\draw [fill=black] (9.804226065180615,-2.8521359549995813) circle (1.5pt);
\end{scriptsize}
\end{tikzpicture}
\caption{Forming $G'$ from $G$}
\label{fig:No5-5face}
\end{figure}
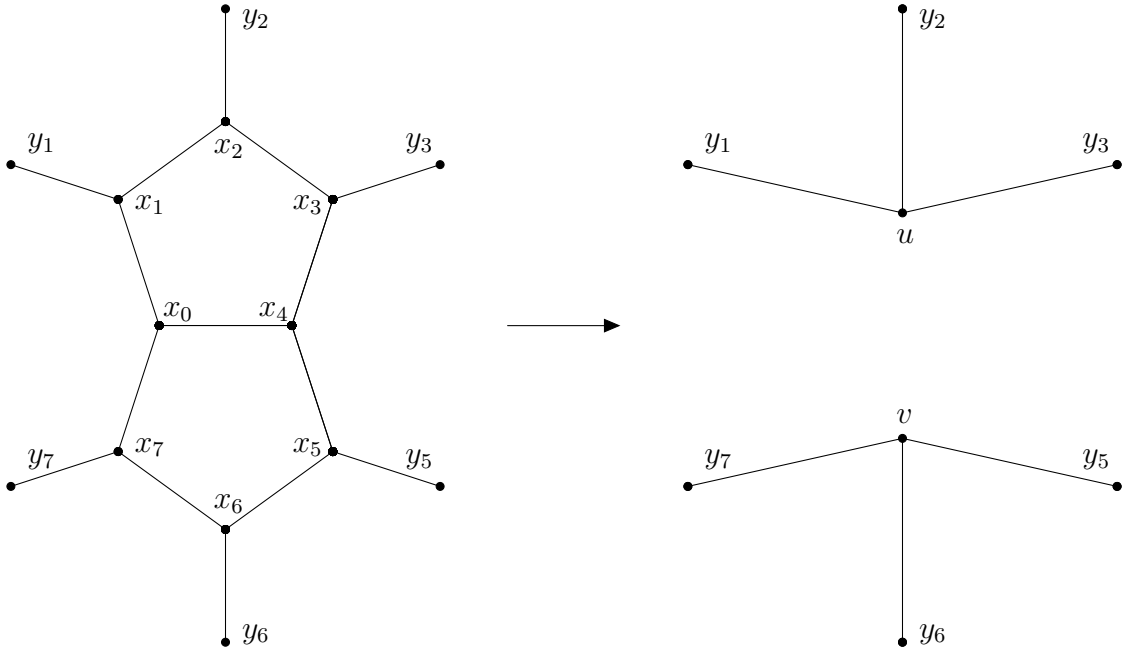

\begin{lemma}\label{No5-5face}
No two 5-faces in $G$ share an edge.
\end{lemma}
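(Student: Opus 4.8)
The plan is to assume for contradiction that two $5$-faces share an edge and then produce a good coloring of $G$, contradicting minimality. I would set up the configuration exactly as in Figure~\ref{fig:No5-5face}: let the shared edge be $x_0x_4$, with one $5$-face bounded by the cycle $x_0x_1x_2x_3x_4$ and the other by $x_0x_7x_6x_5x_4$. By Lemma~\ref{No2on5cycle} all eight vertices $x_0,\dots,x_7$ are $3$-vertices. The two ``hub'' vertices $x_0$ and $x_4$ have all three incident edges on the two faces, while each of $x_1,x_2,x_3,x_5,x_6,x_7$ carries a unique third neighbor, which I name $y_1,y_2,y_3,y_5,y_6,y_7$. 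Note that the subgraph on $x_0,\dots,x_7$ is precisely the $8$-cycle $x_0x_1\cdots x_7$ together with the single chord $x_0x_4$, and since every $x_i$ already has its full complement of three edges there are no further chords.

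First I would verify the structural preliminaries needed to build the reduced graph: that $y_1,\dots,y_7$ are pairwise distinct and none lies on the two faces. Each possible coincidence or short adjacency produces a triangle, a $4$-cycle, or a separating $5$-cycle, and so is ruled out by Lemmas~\ref{NoTriangle}, \ref{No4cycle}, \ref{separating}, \ref{No2on5cycle} and \ref{No2on6cycle}; consequently each $y_i$ meets the double face in the single edge $x_iy_i$. A pleasant feature of this configuration is that I do \emph{not} need to determine which adjacencies among the $y_i$ occur, because the reduction introduces new edges only at new vertices.

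The reduction is to form $G'$ from $G$ by deleting $x_0,\dots,x_7$, adding a vertex $u$ joined to $y_1,y_2,y_3$ and a vertex $v$ joined to $y_5,y_6,y_7$. Since the $y_i$ are distinct and each lost exactly one edge, $G'$ is again a subcubic planar multigraph of edge multiplicity at most two on six fewer vertices, so by minimality it has a good coloring $\phi$. I impose $\phi$ on $G$ by coloring $x_iy_i$ with $\phi(uy_i)$ for $i\in\{1,2,3\}$ and with $\phi(vy_i)$ for $i\in\{5,6,7\}$, and I write $a_i$ for this color. Because $u$ and $v$ are properly colored, $a_1,a_2,a_3$ are distinct and $a_5,a_6,a_7$ are distinct; and because $\phi$ is good at $u$ and $v$, each of $\sU_\phi(y_1),\sU_\phi(y_2),\sU_\phi(y_3)$ differs from $\{a_1,a_2,a_3\}$ and each of $\sU_\phi(y_5),\sU_\phi(y_6),\sU_\phi(y_7)$ differs from $\{a_5,a_6,a_7\}$. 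These distinctness relations and forbidden triples are the data I would feed into the extension.

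It then remains to color the nine edges of the double face: the chord $x_0x_4$ and the two paths $x_0x_1x_2x_3x_4$ and $x_0x_7x_6x_5x_4$. I would color $x_0x_4$ first and then propagate outward along each path, repeatedly invoking the device ``color $e$ from $\{\alpha,\beta\}$ with respect to $\gamma$ and $\sU_\phi(y)$'' introduced after Lemma~\ref{No4cycle}, branching on the values $a_1,a_2,a_3$ and $a_5,a_6,a_7$ relative to the chord color and, in the tight cases, on the exact sets $\sU_\phi(y_i)$ (which the forbidden triples pin down). The hard part will be the two hubs $x_0$ and $x_4$: each is a $3$-vertex whose entire color set is made of path and chord colors, and the pair must satisfy $|\sU_\phi(x_0)\cap\sU_\phi(x_4)|\le 2$ simultaneously with all six spoke constraints at the $y_i$. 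This global coupling through the chord forces a genuine case analysis on how the spoke colors on the two sides coincide, with occasional recoloring of a spoke or reversal of a path; in each branch one either exhibits a good coloring of $G$ or contradicts a structural lemma. This bookkeeping is exactly what is deferred to the Appendix, and completing it proves the lemma.
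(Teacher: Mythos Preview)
Your reduction to $G'$ via the two new vertices $u$ and $v$ is exactly the paper's construction, and the key data you extract---that $a_1,a_2,a_3$ are distinct, $a_5,a_6,a_7$ are distinct, and $\sU_\phi(y_i)\neq\{a_1,a_2,a_3\}$ (respectively $\{a_5,a_6,a_7\}$) for $i$ on the appropriate side---is precisely what the paper uses. One small inaccuracy: the $y_i$ need not all be distinct; the identification $y_2=y_6$ creates only a $6$-cycle, which no lemma forbids. This does not harm the construction of $G'$, since $u$ and $v$ are different vertices, but you should not assert full distinctness.

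Where your outline diverges from the paper is the extension strategy. You propose to color the chord $x_0x_4$ first and then propagate symmetrically along both $4$-paths. The paper instead exploits the forbidden-triple information asymmetrically: it colors $x_5x_6$ with $a_7$ and $x_6x_7$ with $a_5$, so that $\sU(x_6)=\{a_5,a_6,a_7\}\neq\sU_\phi(y_6)$ automatically, and then invokes Lemma~\ref{extend} (since $\sU(x_0)=\sU(x_4)=\emptyset$) to color $x_7x_0$ and $x_4x_5$. This dispatches the entire $x_4x_5x_6x_7x_0$ side essentially for free and reduces the problem to coloring the single $5$-cycle $x_0x_1x_2x_3x_4$ (chord included), with just two external colored edges $\alpha=\phi(x_7x_0)$ and $\beta=\phi(x_4x_5)$ to track. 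Your symmetric approach is not wrong, but it leaves nine edges coupled through both hubs, and the resulting case analysis is substantially larger than the paper's; the swap trick at $x_6$ is the main labor-saving idea you are missing. Even with the paper's simplification there remain a few stubborn branches in which $\phi$ cannot be extended and one must revert to the earlier partial coloring $\psi$ and recolor the $x_5x_6x_7$ side as well---a fallback you should anticipate in any version of the argument.
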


\begin{proof}
Suppose the contrary.  By Lemma \ref{No2on5cycle}, the boundaries of the two faces form an 8-cycle, $x_0x_1\dots x_7$ with $x_4x_0 \in E(G)$.  By Lemmas \ref{NoTriangle}, \ref{separating}, \ref{No4cycle}, and \ref{No2on5cycle}, each $x_i$ other than $x_4,x_0$ has a third neighbor $y_i$ not on the 8-cycle that are distinct from each other, except possibly $y_2 = y_6$.  Additionally, the only possible adjacencies between the $y_i$'s are $y_iy_j$ for $i \in \{1,2,3\}$ and $j \in \{5,6,7\}$.  

Let $G'$ denote the graph obtained from $G$ by removing $x_0,\dots, x_7$, adding two new vertices $u,v$ and the edges $uy_1,uy_2, uy_3, vy_5, vy_6, vy_7$ (see Figure \ref{fig:No5-5face}).  Observe that $G'$ is a subcubic planar multigraph with multiplicity at most two, and so by the minimality of $G$, $G'$ has a good coloring.  We impose this coloring onto $G$ by coloring $x_iy_i$ with the same color as $uy_i$ for  $i \in \{1,2,3\}$ and $x_jy_j$ with the same color as $vy_j$, for $j \in \{5,6,7\}$.  Let $\psi$ denote this good partial coloring of $G$.

 By the construction of $G'$, $|\{\psi(x_iy_i): i \in \{1,2,3\}\}| = |\{\psi(x_iy_i): i \in \{5,6,7\}\}| = 3$.  So we can further extend our good partial coloring  by coloring $x_5x_6$ and $x_6x_7$ with $\psi(x_7y_7)$ and $\psi(x_5y_5)$, respectively.  By Lemma \ref{extend}, we can color $x_7x_0$ and $x_4x_5$ as well.  Call this extended, good partial coloring of $G$, $\phi$.  Thus, in remains to color the edges of the 5-cycle $x_0x_1x_2x_3x_4$.   

Without loss of generality, we may assume that $\phi(x_1y_1) = 1$ and $\phi(x_3y_3) = 2$, and let $\alpha: = \phi(x_7x_0)$ and $\beta: = \phi(x_4x_5)$.    By the construction of $G'$, $\{1,2,\phi(x_2y_2)\} \notin \{\sU_\phi(y_i): i \in \{1,2,3\}\}$.  Note that under $\phi$, $\sU_\phi(x_5)\setminus\{\beta\} = \sU_\phi(x_7)\setminus\{\alpha\} =  \{\phi(x_5y_5), \phi(x_7y_7)\}$.   Up to relabeling colors, we may assume that $\{\phi(x_5y_5), \phi(x_7y_7)\} \in \{\{1,2\}, \{1,3\}, \{4,5\}\}$.  

In almost every situation, we can extend $\phi$ to a good coloring of $G$ by case analysis, the details of which can be found in the Appendix.  Thus, we will only consider the situations in which we cannot extend $\phi$ to a good coloring of $G$.

 When $\{\phi(x_5y_5), \phi(x_7y_7)\} = \{1,2\}$, we can always extend $\phi$ to a good coloring of $G$.  

When $\{\phi(x_5y_5), \phi(x_7y_7)\} = \{1,3\}$, we can always extend $\phi$ unless $\phi(x_2y_2) = 3$, $\{\alpha, \beta\} = \{4,5\}$, $\sU_\phi(y_1) = \{1,4,5\}$, $\sU_\phi(y_2) = \{3,4,5\}$, and $\sU_\phi(y_3) = \{1,2,\alpha\}$.  

In this situation, we will reconsider the good partial coloring of $G$ $\psi$.  By the construction of $G'$,  $\phi(x_6y_6) \in  \{2,4,5\}$.  Without loss of generality, assume $\alpha = \phi(x_7x_0) = 4$ and $\beta = \phi(x_4x_5) = 5$.  Thus, $\phi(x_6y_6) = 2$.  If $\sU_\phi(y_5) \neq \{1,3,4\}$, we could recolor $x_4x_5$ with 4.  However, when $\alpha = \beta$, we can extend $\phi$ to a good coloring of $G$.  So $\sU_\phi(y_3) = \{1,3,4\}$, and similarly, $\sU_\phi(y_7) = \{1,3,5\}$.  

We now proceed by reconsidering the good partial coloring of $G$ $\psi$.  Recall that under $\psi$ the edges of the cycle $x_0x_1\dots x_7$ along with the edge $x_4x_0$ are the remaining uncolored edges.  Thus, when we `color the cycle in order' we color the edges $x_0x_1, x_1x_2, \dots, x_6x_7, x_7x_0$ in this order.  

Suppose $\sU_\psi(y_6) \neq \{2,3,5\}$.  If $(\psi(x_5y_5), \psi(x_7y_7)) = (1,3)$, color $x_4x_0$ with 1 and color the cycle in order with 3, 2, 5, 4, 5, 3, 5, 4.  If $(\psi(x_5y_5), \psi(x_7y_7)) = (3,1)$, color $x_4x_0$ with 2 and color the cycle in order with 3, 2, 5, 4, 1, 5, 3, 4.  In either case, this is a good coloring of $G$.

So $\sU_\psi(y_6) = \{2,3,5\}$.  If $(\psi(x_5y_5), \psi(x_7y_7)) = (1,3)$, color $x_4x_0$ with 1 and color the cycle in order with 2, 5, 1, 3, 5, 4, 1, 4.  If $(\psi(x_5y_5), \psi(x_7y_7)) = (3,1)$, color $x_4x_0$ with 4 and color the cycle in order with 2, 5, 1, 3, 5, 1, 4, 3.  In either case, this is a good coloring of $G$.  This proves the case when $\{\phi(x_5y_5), \phi(x_7y_7)\} = \{1,3\}$.

When $\{\phi(x_5y_5), \phi(x_7y_7)\} = \{4,5\}$, we can always extend $\phi$ to a good coloring of $G$, unless up to relableing colors and symmetry, one of two situations occurs.  The first is when $\phi(x_2y_2) = 3$, $(\alpha,\beta)  = (1,3)$, $\sU_\phi(y_1) = \{1,4,5\}$, and $\sU_\phi(y_2) = \{3,4,5\}$. The second is when $\phi(x_2y_2) = 5$, $(\alpha,\beta) = (1,3)$, $\sU_\phi(y_1) = \{1,3,4\}$, and $\sU_\phi(y_2) = \{3,4,5\}$.  In both cases, we reconsider $\psi$ as above.

In a manner similar to the above, we deduce that $\sU_\phi(y_5) = \{1,4,5\}$, $\sU_\phi(y_7) = \{3,4,5\}$, and $\phi(x_6y_6) = 2$.  We now recolor the edges of the cycle $x_0x_1 \dots x_7$ and the edge $x_4x_0$.

If $(\psi(x_5y_5), \psi(x_7y_7)) = (5,4)$, suppose $\sU_\psi(y_6) \neq \{2,3,5\}$.  Then color $x_0x_1, x_1x_2$, $x_3x_4, x_4x_5$, $x_5x_6$, $x_6x_7$, $x_7x_0, x_4x_0$ with 5, 2, 1, 4, 3, 5, 1, 3, respectively, and color $x_2x_3$ from $\{4,5\}$ with respect to 1 and $\sU_\phi(y_3)$.  This yields a good coloring of $G$.  

So $\sU_\phi(y_6) = \{2,3,5\}$.  We then color $x_0x_1, x_1x_2, x_3x_4, x_4x_5, x_5x_6, x_6x_7, x_7x_0, x_4x_0$ with 4, 2, 1, 3, 4, 1, 5, 2, respectively, and color $x_2x_3$ from $\{4,5\}$ with respect to 1 and $\sU_\phi(y_3)$.  This yields a good coloring of $G$.

A similar argument holds for $(\psi(x_5y_5), \psi(x_7y_7)) = (4,5)$ when considering whether or not $\sU_\phi(y_6)$ is $\{2,3,4\}$ by switching the roles of 4 and 5.  This proves the first subcase.

In the second subcase, we again  reconsider the good partial coloring of $G$ $\psi$.  As above, we deduce that $\sU_\phi(y_5) = \{1,4,5\}, \sU_\phi(y_7) = \{3,4,5\}$, and $\phi(x_6y_6) = 2$.  We now recolor the edges of the cycle $x_0x_1\dots x_7$ and the edge $x_4x_0$.

 If $(\psi(x_5y_5), \psi(x_7y_7)) = (5,4)$, suppose $\sU_\psi(y_6) \neq \{1,2,4\}$.  Then color $x_0x_1, x_1x_2$, $x_3x_4$, $x_4x_5$, $x_5x_6, x_6x_7, x_7x_0, x_4x_0$ with 3, 2, 4, 3, 4, 1, 5, 1, respectively, and color $x_2x_3$ from $\{1,3\}$ with respect to 4 and $\sU_\phi(y_3)$.  This yields a good coloring of $G$.

So $\sU_\phi(y_6) = \{1,2,4\}$.  We then color $x_0x_1, x_1x_2, x_3x_4, x_4x_5, x_5x_6, x_6x_7, x_7x_0, x_4x_0$ with 5, 2, 1, 4, 3, 5, 1, 3, respectively, and color $x_2x_3$ from $\{3,4\}$ with respect to 1 and $\sU_\phi(y_3)$.  This yields a good coloring of $G$.

If $(\psi(x_5y_5), \psi(x_7y_7)) = (4,5)$, suppose $\sU_\phi(y_6) \neq \{1,2,5\}$.  Then color $x_0x_1, x_1x_2, x_3x_4$, $x_4x_5$, $x_5x_6, x_6x_7, x_7x_0, x_4x_0$ with 3, 2, 4, 3, 5, 1, 5, 4, 1, respectively, and color $x_2x_3$ from $\{1,3\}$ with respect to 4 and $\sU_\phi(y_3)$.  This yields a good coloring of $G$.

So $\sU_\phi(y_6) = \{1,2,5\}$.  We then color $x_0x_1, x_1x_2, x_3x_4, x_4x_5, x_5x_6, x_6x_7, x_7x_0, x_4x_0$ with 4, 2, 1, 5, 3, 4, 1, 3, respectively, and color $x_2x_3$ from $\{3,4\}$ with respect to 1 and $\sU_\phi(y_3)$.  This yields a good coloring of $G$.

Thus, in each of the three cases, we obtain a good coloring of $G$.   This proves the lemma.
\end{proof}

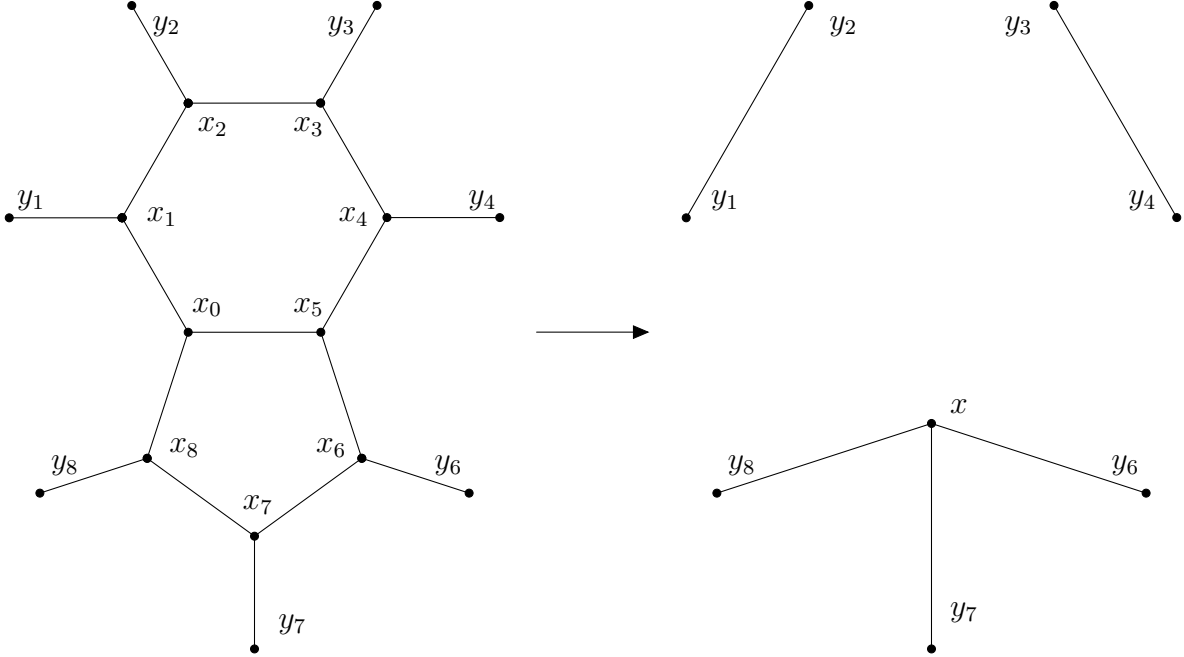
\begin{figure}[h]
\centering
\begin{tikzpicture}[line cap=round,line join=round,>=triangle 45,x=0.75cm,y=0.75cm]
\clip(-11,-6) rectangle (11,6);
\draw (-6.0,-3.62)-- (-6.0,-5.62);
\draw (-4.097886967409693,-2.238033988749895)-- (-2.195773934819386,-2.85606797749979);
\draw (-7.902113032590307,-2.238033988749895)-- (-9.804226065180615,-2.85606797749979);
\draw (-3.652480562338399,2.032090915285201)-- (-1.6524805623383987,2.032090915285201);
\draw (-4.828051066923345,4.064056926535307)-- (-3.8280510669233445,5.796107734104184);
\draw (-7.175570504584946,4.061966011250106)-- (-8.175570504584945,5.794016818818983);
\draw (-8.347519437661601,2.027909084714799)-- (-10.347519437661601,2.0279090847148002);
\draw (-7.175570504584946,4.061966011250106)-- (-4.828051066923345,4.064056926535307);
\draw (-4.828051066923345,4.064056926535307)-- (-3.652480562338399,2.032090915285201);
\draw (-3.652480562338399,2.032090915285201)-- (-4.824429495415053,-0.001966011250105426);
\draw (-4.824429495415053,-0.001966011250105426)-- (-7.175570504584946,-0.001966011250105204);
\draw (-7.175570504584946,-0.001966011250105204)-- (-8.347519437661601,2.027909084714799);
\draw (-8.347519437661601,2.027909084714799)-- (-7.175570504584946,4.061966011250106);
\draw (-7.175570504584946,-0.001966011250105204)-- (-7.902113032590307,-2.2380339887498946);
\draw (-4.824429495415053,-0.001966011250105426)-- (-4.097886967409693,-2.238033988749895);
\draw (-7.902113032590307,-2.2380339887498946)-- (-6.0,-3.62);
\draw (-6.0,-3.62)-- (-4.097886967409693,-2.238033988749895);
\draw (6, -1.62)--(2.196,-2.856);
\draw (6, -1.62)--(9.804,-2.856);
\draw (6, -1.62)--(6, -5.62);
\draw [->] (-1.0,0.0) -- (1.0,0.0);
\draw (-7.2,4) node[anchor=north west] {$x_2$};
\draw (-5.5,4) node[anchor=north west] {$x_3$};
\draw (-4.7,2.4) node[anchor=north west] {$x_4$};
\draw (-5.5,0.8) node[anchor=north west] {$x_5$};
\draw (-5.1,-1.7) node[anchor=north west] {$x_6$};
\draw (-6.4,-2.7) node[anchor=north west] {$x_7$};
\draw (-7.7,-1.7) node[anchor=north west] {$x_8$};
\draw (-7.3,0.8) node[anchor=north west] {$x_0$};
\draw (-8.1,2.4) node[anchor=north west] {$x_1$};
\draw (-8,5.8) node[anchor=north west] {$y_2$};
\draw (4,5.8) node[anchor=north west] {$y_2$};
\draw (-4.9,5.8) node[anchor=north west] {$y_3$};
\draw (3.824429495415055,5.794016818818983)--(1.6524805623383987,2.0279090847148002);
\draw (8.171948933076656,5.796107734104184)--(10.347519437661601,2.032090915285201);
\draw (7.1, 5.8) node[anchor=north west] {$y_3$};
\draw (-2.4,2.7) node[anchor=north west] {$y_4$};
\draw (-3,-2) node[anchor=north west] {$y_6$};
\draw (-5.767057190082635,-4.817012066115719) node[anchor=north west] {$y_7$};
\draw (-9.8,-2) node[anchor=north west] {$y_8$};
\draw (-10.4,2.7) node[anchor=north west] {$y_1$};
\draw (1.9,2.7) node[anchor=north west] {$y_1$};
\draw (2.2, -2) node[anchor=north west] {$y_8$};
\draw (9,-2) node[anchor=north west] {$y_6$};
\draw (9.3,2.7) node[anchor=north west] {$y_4$};
\draw (6.139342809917368,-4.599212066115719) node[anchor=north west] {$y_7$};
\draw (6.14, -1.) node[anchor=north west] {$x$};
\begin{scriptsize}
\draw [fill=black] (-6.0,-3.62) circle (1.5pt);
\draw [fill=black] (-4.097886967409693,-2.238033988749895) circle (1.5pt);
\draw [fill=black] (-4.824429495415053,-0.001966011250105426) circle (1.5pt);
\draw [fill=black] (-7.175570504584946,-0.001966011250105204) circle (1.5pt);
\draw [fill=black] (-7.902113032590307,-2.2380339887498946) circle (1.5pt);
\draw [fill=black] (-3.652480562338399,2.032090915285201) circle (1.5pt);
\draw [fill=black] (-4.828051066923345,4.064056926535307) circle (1.5pt);
\draw [fill=black] (-7.175570504584946,4.061966011250106) circle (1.5pt);
\draw [fill=black] (-8.347519437661601,2.027909084714799) circle (1.5pt);
\draw [fill=black] (-6.0,-5.62) circle (1.5pt);
\draw [fill=black] (-4.097886967409693,-2.238033988749895) circle (1.5pt);
\draw [fill=black] (-2.195773934819386,-2.85606797749979) circle (1.5pt);
\draw [fill=black] (-7.902113032590307,-2.238033988749895) circle (1.5pt);
\draw [fill=black] (-9.804226065180615,-2.85606797749979) circle (1.5pt);
\draw [fill=black] (-1.6524805623383987,2.032090915285201) circle (1.5pt);
\draw [fill=black] (-4.828051066923345,4.064056926535307) circle (1.5pt);
\draw [fill=black] (-3.8280510669233445,5.796107734104184) circle (1.5pt);
\draw [fill=black] (-7.175570504584946,4.061966011250106) circle (1.5pt);
\draw [fill=black] (-8.175570504584945,5.794016818818983) circle (1.5pt);
\draw [fill=black] (-7.175570504584946,4.061966011250106) circle (1.5pt);
\draw [fill=black] (-8.347519437661601,2.027909084714799) circle (1.5pt);
\draw [fill=black] (-10.347519437661601,2.0279090847148002) circle (1.5pt);
\draw [fill=black] (-8.347519437661601,2.027909084714799) circle (1.5pt);
\draw [fill=black] (-8.347519437661601,2.027909084714799) circle (1.5pt);
\draw [fill=black] (3.824429495415055,5.794016818818983) circle (1.5pt);
\draw [fill=black] (8.171948933076656,5.796107734104184) circle (1.5pt);
\draw [fill=black] (10.347519437661601,2.032090915285201) circle (1.5pt);
\draw [fill=black] (9.804226065180615,-2.85606797749979) circle (1.5pt);
\draw [fill=black] (6.0,-5.62) circle (1.5pt);
\draw [fill=black] (6.0,-1.62) circle (1.5pt);
\draw [fill=black] (2.195773934819386,-2.85606797749979) circle (1.5pt);
\draw [fill=black] (1.6524805623383987,2.0279090847148002) circle (1.5pt);
\end{scriptsize}
\end{tikzpicture}
\caption{Forming $G'$ from $G$}
\label{fig:No5-6face}
\end{figure}

\begin{lemma}\label{No5-6face}
No 5-face in $G$ can share an edge with a 6-face. 
\end{lemma}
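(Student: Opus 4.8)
The plan is to mirror the argument of Lemma \ref{No5-5face}. Suppose for contradiction that a 5-face and a 6-face share an edge. Since $G$ is 2-connected (Lemmas \ref{delta} and \ref{cut-vertex}), both faces are bounded by cycles, and by Lemmas \ref{No2on5cycle} and \ref{No2on6cycle} every vertex on either boundary is a 3-vertex. The two boundaries together form a 9-cycle $x_0x_1x_2\dots x_8$ with the shared edge $x_0x_5$ as a chord, where the 6-face is bounded by $x_0x_1x_2x_3x_4x_5$ and the 5-face by $x_0x_5x_6x_7x_8$ (see Figure \ref{fig:No5-6face}). The chord endpoints $x_0$ and $x_5$ spend all three incident edges on this configuration, while each remaining $x_i$ has a third neighbor $y_i$ off the configuration.

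First I would record the structural facts needed to build the auxiliary graph. Using Lemmas \ref{NoTriangle}, \ref{separating}, \ref{No4cycle}, \ref{No2on5cycle} and \ref{No2on6cycle}, I would show that the $y_i$ are not on the configuration and are pairwise distinct and pairwise nonadjacent, except for a short explicit list of coincidences and adjacencies (between a third neighbor of the 6-face and one of the 5-face, or between sufficiently separated vertices) whose presence would not create a forbidden short or separating cycle and which leave the rest of the argument unaffected. I would then form $G'$ by deleting $x_0,\dots,x_8$, adding the edges $y_1y_2$ and $y_3y_4$, and adding a new vertex $x$ adjacent to $y_6,y_7,y_8$. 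Since $G$ is simple, adding edges between distinct pairs keeps the multiplicity at most two (note after Lemma \ref{muledges}); all degrees stay at most three; the new objects embed in the single face left by the deletion, so $G'$ is subcubic planar; and $G'$ has fewer vertices than $G$. By minimality $G'$ has a good coloring, which I impose on $G$ by coloring $x_1y_1,x_2y_2$ with the color of $y_1y_2$, coloring $x_3y_3,x_4y_4$ with the color of $y_3y_4$, and coloring each $x_iy_i$ with the color of $xy_i$ for $i\in\{6,7,8\}$. Call this good partial coloring $\psi$.

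Next I would reduce to the 6-cycle. Because the three edges at $x$ are properly colored in $G'$, the colors $\psi(x_6y_6),\psi(x_7y_7),\psi(x_8y_8)$ are distinct, so I can color $x_6x_7$ and $x_7x_8$ with $\psi(x_8y_8)$ and $\psi(x_6y_6)$ respectively, making $x_7$ properly colored while keeping the coloring good. Since $x_0$ and $x_5$ are still incident to no colored edge, Lemma \ref{extend} lets me color $x_5x_6$ and $x_8x_0$; this simultaneously guarantees $\sU(x_6)\neq\sU(y_6)$ and $\sU(x_8)\neq\sU(y_8)$, so the entire 5-face side is now good. Call the resulting good partial coloring $\phi$. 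The only uncolored edges are now those of the 6-cycle $x_0x_1x_2x_3x_4x_5$ (including the chord $x_0x_5$), and each chord endpoint sees exactly one color. Moreover, by the presence of $y_1y_2$ and $y_3y_4$ in $G'$ we have $\sU_\phi(y_1)\neq\sU_\phi(y_2)$ and $\sU_\phi(y_3)\neq\sU_\phi(y_4)$, with $\phi(x_1y_1)=\phi(x_2y_2)$ and $\phi(x_3y_3)=\phi(x_4y_4)$.

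The remaining, and main, task is to complete $\phi$ to a good coloring of $G$ by properly coloring the six edges of the 6-cycle subject to: at each $x_i$ with $i\in\{1,2,3,4\}$ avoiding $\sU_\phi(x_i)=\sU_\phi(y_i)$, at the two chord endpoints avoiding the single color already present, and keeping $|\sU(x_i)\cap\sU(x_{i+1})|\le 2$ around the cycle. This is a finite case analysis organized by the (few) essentially distinct patterns of $\phi(x_1y_1)$ and $\phi(x_3y_3)$, the two chord-endpoint colors, and the sets $\sU_\phi(y_1),\dots,\sU_\phi(y_4)$, using the shorthand ``color $\dots$ from $\{\alpha,\beta\}$ with respect to $\gamma$ and $\sU_\phi(\cdot)$'' introduced after Lemma \ref{No4cycle}; whenever a greedy choice fails it pins down one of the $\sU_\phi(y_i)$ exactly, and a short recoloring---often obtained by re-examining $\psi$ and coloring the whole $9$-cycle together with the chord in order, exactly as in Lemma \ref{No5-5face}---resolves it. This case analysis is the genuine obstacle: it is lengthy but routine, and as with the other lemmas of this section I would relegate the full details to the Appendix. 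Producing a good coloring of $G$ in every case contradicts the minimality of $G$, which proves the lemma.
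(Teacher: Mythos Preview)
Your approach is essentially identical to the paper's: the same auxiliary graph $G'$, the same partial colorings $\psi$ and $\phi$, and the same reduction to a case analysis on the 6-cycle $x_0x_1x_2x_3x_4x_5$ deferred to the Appendix. Two minor points: your list of constraints at the chord endpoints omits the 2-intersection conditions $\sU(x_0)\neq\sU(x_8)$ and $\sU(x_5)\neq\sU(x_6)$ (the paper handles these by casing on $\sU_\phi(x_8)\setminus\{\alpha\}$), and unlike in Lemma~\ref{No5-5face} the paper notes that here $\phi$ can \emph{always} be extended directly, so your planned fallback to recoloring from $\psi$ turns out to be unnecessary.
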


\begin{proof}
Suppose that a 5-face and a 6-face share an edge.  By Lemmas \ref{NoTriangle} and \ref{No2on5cycle}, their boundaries form a 9-cycle, $x_0 x_1 \dots x_8$ so that $x_5x_0 \in E(G)$ .  By Lemmas \ref{No2on5cycle} and \ref{No2on6cycle}, each $x_i$ is a 3-vertex.   Additionally, Lemmas \ref{NoTriangle},  \ref{separating} and \ref{No4cycle}  imply that each $x_i$ other than $x_5, x_0$ has a third neighbor $y_i$ not on the 9-cycle.   By these same lemmas, the $y_i$'s are distinct from one another except possibly $y_7 \in \{y_2,y_3\}$, and furthermore $y_1y_2, y_3y_4 \notin E(G)$.  

Let $G'$ denote the graph obtained from $G$ by deleting $x_0, x_1, \dots, x_8$, adding a vertex $x$, and adding the edges $y_1y_2, y_3y_4, xy_6, xy_7, xy_8$ (see Figure \ref{fig:No5-6face}).  Observe that $G'$ is a subcubic planar multigraph with mutliplicity at most two, and so by the minimality of $G$, $G'$ has a good coloring.  We impose this coloring onto $G$ by coloring $x_1y_1$ and $x_2y_2$ with the color used on $y_1y_2$, coloring $x_3y_3$ and $x_4y_4$ with the color used on $y_3y_4$, and for $i \in \{6,7,8\}$, coloring $x_iy_i$ with the color used on $xy_i$.  Let $\psi$ denote this good partial coloring of $G$.

By the construction of $G'$, $|\\psi(x_iy_i): i \in \{6, 7, 8\}| = 3$.  So we can further extend our good partial coloring by coloring $x_6x_7$ and $x_7x_8$ with $\psi(x_8y_8)$ and $\psi(x_6y_6)$, respectively.  By Lemma \ref{extend}, we can further extend this by coloring $x_8x_0$ and $x_5x_6$.  Let $\phi$ denote this good partial coloring of $G$.  Thus, it only remains to color the edges of the 6-cycle, $x_0x_1x_2x_3x_4x_5$.

Without loss of generality, suppose $\phi(x_1y_1) = \phi(x_2y_2) = 1$, and let $\alpha := \phi(x_8x_0)$ and $\beta := \phi(x_5x_6)$.  Up to relabeling colors and symmetry, we may assume that $\phi(x_3y_3) = \phi(x_4y_4) \in \{1,2\}$.  Note that $\sU_\phi(x_8)\setminus\{\alpha\} = \sU_\phi(x_6)\setminus\{\beta\} = \{\phi(x_6y_6), \phi(x_8y_8)\}$.

Unlike in the proof of Lemma \ref{No5-5face}, we can always extend $\phi$ to a good coloring of $G$ by case analysis, the details of which can be found in the Appendix.  Thus we assume the lemma holds.
\end{proof}


\section{Proof of Theorem \ref{thm:conj}}\label{sec:proof}

We are now ready to prove Theorem \ref{thm:conj} via discharging using the lemmas from Sections \ref{struct1}, \ref{struct2} and \ref{struct3},

\begin{proof}
By Euler's formula,
\[
\sum_{v\in V(G)}(2d(v)-6)+\sum_{f\in F(G)}(d(f)-6)=-12.
\]

Thus, if we assign  to each vertex $v$ the initial charge $2d(v) - 6$ and  to each face $f$ the initial charge $d(f)-6$, 
then the overall charge will be $ - 12$.  We now redistribute charges among faces and vertices so
 that the final charge of every face and every vertex is nonnegative, a contradiction.

{\bf Discharging Rules:}\\
(R1) Every 2-vertex receives 1 from each incident face.\\
(R2) Every 5-face receives $\frac{1}{5}$ from each adjacent face. \\

By 
 Rule~(R1), at the end of discharging, each $2$-vertex 
 will have  charge $-2+1+1=0$. The charge of each 3-vertex does not change and remains $0$.

 By Rule~(R2) and Lemmas \ref{No2on5cycle} and \ref{No5-5face}, the final charge of every $5$-face is $5 - 6 + 5 \times \frac{1}{5} = 0$.

By Lemmas \ref{No2on6cycle} and \ref{No5-6face}, each 6-face gives no charge.  Thus, as it starts with zero charge and does not receives any charge, the final charge is zero.

By Lemmas \ref{No2on7face} and \ref{No5-5face}, each 7-face contains only 3-vertices and is adjacent to at most three 5-faces.  Thus, the final charge is at least $7 - 6 - 3 \times \frac{1}{5} = \frac{2}{5}$.

By Lemmas \ref{No5-5face} and \ref{distance>=5}, each $k$-face, $k \ge 8$, is adjacent to at most $\lfloor \frac{k}{2} \rfloor$ 5-faces 
and contains at most $\lfloor \frac{k}{5}\rfloor$ 2-vertices on its boundary.  
Thus, the final charge is at least  
$k - 6 - \left \lfloor \frac{k}{5} \right \rfloor \times 1 - \left \lfloor \frac{k}{2} \right \rfloor \times \frac{1}{5}$, which is positive for $k \ge 8$.

This completes the proof.
\end{proof}


\noindent\textbf{Future Questions.}  
Many analogous questions asked concerning the strong chromatic index of graphs can be asked regarding the $k$-intersection chromatic index as well (see \cite{FGST}).  As it pertains to the 2-intersection chromatic index of subcubic graphs, if a subcubic graph $G$ has the property that every 3-vertex is adjacent to only 2-vertices, then $\tchi(G) = \chi(G)$.  So we may assume that $G$ has  two adjacent 3-vertices.  This implies that $\tchi(G) \ge 4$.  This begs the following question: what type of graph must $G$ be to force $\tchi(G) = 4$?\\


\noindent {\bf Acknowledgment.}  The author wishes to thank Alexandr V. Kostochka and Thomas Mahoney for their helpful discussions.



\newpage
\section*{Appendix}

In this section we provide the detailed proofs of Lemmas \ref{No5-5face} and \ref{No5-6face}.

\begin{proof}[Proof of Lemma \ref{No5-5face}]
We assume $\psi$ and $\phi$ to be as described in the proof of Lemma \ref{No5-5face} in Section \ref{struct3}.  Thus, in order to extend $\phi$, it remains to color the edges of the cycle $x_0x_1x_2x_3x_4$.  As a result, when we `color the cycle in order' we color the edges $x_0x_1, x_1x_2, x_2x_3, x_3x_4, x_4x_0$ in this order.

We will break the following argument into cases depending on $\sU_\phi(x_7)\setminus\{\alpha\}$, and within each case we consider the values of $\alpha$ and $\beta$.  While each argument is relatively short, we will oftentimes state and prove claims to aid in the readability.

\setcounter{case}{0}
\begin{case}\label{5.5.1}
$\sU_\phi(x_7)\setminus\{\alpha\} = \{1,2\}$.
\end{case}

In this case we may assume without loss of generality that $\phi(x_2y_2) = 3$.

\begin{subcase}
$\alpha = \beta = 3$.
\end{subcase}

\begin{claim}
$\sU_\phi(y_3) = \{1,2,5\}$.
\end{claim}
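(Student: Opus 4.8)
\emph{The plan is to prove the claim in contrapositive form:} assuming $\sU_\phi(y_3)\neq\{1,2,5\}$, I would produce a good coloring of $G$, contradicting the minimality of $G$. First I would record the data forced by $\phi$. Since $\alpha=\beta=3$, the vertices $x_5,x_7$ satisfy $\sU_\phi(x_5)=\sU_\phi(x_7)=\{1,2,3\}$, while $\phi(x_1y_1)=1$, $\phi(x_2y_2)=3$, $\phi(x_3y_3)=2$, so $1\in\sU_\phi(y_1)$, $3\in\sU_\phi(y_2)$, $2\in\sU_\phi(y_3)$; moreover the construction of $G'$ gives $\sU_\phi(y_i)\neq\{1,2,3\}$ for $i\in\{1,2,3\}$. (If $y_3$ were a $2$-vertex the constraint at $y_3$ is vacuous and $\phi$ extends at once, so I may assume $y_3$ is a $3$-vertex.) Writing $a,b,c,d,e$ for the colors of $x_0x_1,x_1x_2,x_2x_3,x_3x_4,x_4x_0$, properness around the $5$-cycle together with the color $3$ already present at $x_0$ and $x_4$ forces $a,b\in\{2,4,5\}$, $c,d\in\{1,4,5\}$, $e\in\{1,2,4,5\}$, with consecutive edges distinct. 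The edges $x_0x_7$ and $x_4x_5$ give the two \emph{external} constraints $|\{a,e,3\}\cap\{1,2,3\}|\le 2$ and $|\{d,e,3\}\cap\{1,2,3\}|\le 2$, i.e.\ neither $\{a,e\}$ nor $\{d,e\}$ is contained in $\{1,2\}$. The remaining requirements are $|\{a,b,1\}\cap\sU_\phi(y_1)|\le 2$, $|\{b,c,3\}\cap\sU_\phi(y_2)|\le 2$, and $|\{c,d,2\}\cap\sU_\phi(y_3)|\le 2$, the last being equivalent to $|\{c,d\}\cap\sU_\phi(y_3)|\le 1$.

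\emph{The main simplification I would exploit} is that the constraints at $y_1$ and $y_2$ can always be satisfied, independently of the actual sets $\sU_\phi(y_1),\sU_\phi(y_2)$. After fixing $c$, I would color the last edge at $x_2$ (namely $b$) from a two-color palette inside $\{2,4,5\}\setminus\{c\}$ by the ``with respect to $c$ and $\sU_\phi(y_2)$'' rule, and then the last edge at $x_1$ (namely $a$) from $\{2,4,5\}\setminus\{b\}$ ``with respect to $b$ and $\sU_\phi(y_1)$''. Because the anchor color $3$ already lies in $\sU_\phi(y_2)$ and the palette avoids both $3$ and $c$, a two-element palette can never be engulfed by the three-element set $\sU_\phi(y_2)$ in the branch $c\in\sU_\phi(y_2)$; the same argument at $x_1$ uses $1\in\sU_\phi(y_1)$. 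Thus both of these colorings always succeed, and the whole problem collapses to choosing $c,d\in\{1,4,5\}$ and $e$ so that properness, the two external constraints, and $|\{c,d\}\cap\sU_\phi(y_3)|\le 1$ hold simultaneously.

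\emph{Finally I would run through the admissible values of $\sU_\phi(y_3)$.} Since $2\in\sU_\phi(y_3)$ and $\sU_\phi(y_3)\neq\{1,2,3\}$, the set $\sU_\phi(y_3)\setminus\{2\}$ is one of $\{1,4\},\{3,4\},\{3,5\},\{4,5\}$. When it is $\{3,4\}$ or $\{3,5\}$ the condition $|\{c,d\}\cap\sU_\phi(y_3)|\le1$ is automatic (as $c,d\in\{1,4,5\}$ can meet $\{3,4\}$ or $\{3,5\}$ in at most one color), and when it is $\{1,4\}$ or $\{4,5\}$ it forbids only a single pair $\{c,d\}$. In each such case I would select a surviving pair $\{c,d\}$ and then set $e=\phi(x_4x_0)$---the one edge seen by both $\{1,2,3\}$-vertices $x_5,x_7$---to respect both external constraints, checking directly that the assignment is a good coloring. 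The value $\{1,2,5\}$ is precisely the one these first-pass templates do not dispatch: here the forbidden pair pushes $\{c,d\}$ onto $\{1,4\}$ or $\{4,5\}$, and coordinating this with the doubly-constrained edge $e$ and its neighbors $a,d$ leaves no template of the above shape, so this configuration is the residue reserved for the rest of the subcase---which is exactly why the claim isolates it. I expect the only real work to be the fiddly external-constraint bookkeeping around $e$ needed to confirm that every other value of $\sU_\phi(y_3)$ admits such a coloring.
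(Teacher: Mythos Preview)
Your ``main simplification'' is where the argument breaks. You claim that after fixing $c$, the edge $x_1x_2$ can be colored from the two-element palette $\{2,4,5\}\setminus\{c\}$ and then $x_0x_1$ from the two-element palette $\{2,4,5\}\setminus\{b\}$, and that both choices always succeed regardless of $\sU_\phi(y_1),\sU_\phi(y_2)$. But $a=\phi(x_0x_1)$ is also constrained by $e=\phi(x_4x_0)$: properness at $x_0$ forces $a\neq e$, and your own external constraint $\{a,e\}\not\subseteq\{1,2\}$ forces $a\neq 2$ whenever $e=1$. Thus the true palette for $a$ is $\{2,4,5\}\setminus\{b,e\}$ (or $\{4,5\}\setminus\{b\}$ when $e\in\{1,2\}$), which is often a singleton. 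When it is, the $y_1$ condition $\{a,b\}\not\subseteq\sU_\phi(y_1)$ can fail, so the problem does \emph{not} collapse to a choice of $c,d,e$ independent of $\sU_\phi(y_1),\sU_\phi(y_2)$. Relatedly, your phrase ``set $e$ \ldots\ to respect both external constraints'' is ill-posed: one of those constraints involves $a$, which you have not yet chosen.

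For a concrete failure, take $\sU_\phi(y_1)=\{1,2,4\}$, $\sU_\phi(y_2)=\{3,4,5\}$, $\sU_\phi(y_3)=\{2,4,5\}$ (all compatible with the construction of $G'$, since none equals $\{1,2,3\}$). Your scheme allows $\{c,d\}\in\{\{1,4\},\{1,5\}\}$; try $c=4$, $d=1$. Then $e\neq d=1$ and the constraint at $x_5$ forces $e\neq 2$, so $e\in\{4,5\}$. With $e=5$, the ``with respect to $c$ and $\sU_\phi(y_2)$'' rule gives $b=2$, whence $a\in\{2,4,5\}\setminus\{2,5\}=\{4\}$ is forced, and $\{a,b\}=\{2,4\}\subseteq\sU_\phi(y_1)$: the $y_1$ constraint is violated. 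Other choices of $(c,d,e)$ run into analogous obstructions for suitable $\sU_\phi(y_1),\sU_\phi(y_2)$, so no single template works uniformly. This is exactly why the paper's proof branches on $\sU_\phi(y_1)$ first (distinguishing $\sU_\phi(y_1)\neq\{1,2,4\}$ from $\sU_\phi(y_1)=\{1,2,4\}$) and only then on $\sU_\phi(y_3)$, supplying a different explicit coloring in each branch.
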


\begin{proof}
Suppose $\sU_\phi(y_3) \neq \{1,2,5\}$. Also suppose $\sU_\phi(y_1) \neq \{1,2,4\}$.  Color $x_2x_3, x_3x_4, x_4x_0$ with 5, 1, 5, respectively.  Then color $x_1x_2$ (and $x_0x_1$) from $\{2,4\}$ with respect to 5 and $\sU_\phi(y_2)$.  This yields a good coloring of $G$.

So $\sU_\phi(y_1) = \{1,2,4\}$.  Suppose $\sU_\phi(y_3) \neq \{1,2,4\}$.  Color $x_2x_3, x_3x_4, x_4x_0$ with 4, 1, 4, respectively.  Then color $x_1x_2$ (and $x_0x_1$) from $\{2,5\}$ with respect to 4 and $\sU_\phi(y_2)$.  This yields a good coloring of $G$.

So $\sU_\phi(y_3) = \{1,2,4\}$.  Color $x_0x_1, x_1x_2, x_3x_4, x_4x_0$ with 2, 5, 5, 4, respectively, and then color $x_2x_3$ from $\{1,4\}$ with respect to 5 and $\sU_\phi(y_2)$.  This yields a good coloring of $G$.
\end{proof}

A similar argument shows that $\sU_\phi(y_3) = \{1,2,4\}$ by switching the roles of 4 and 5.  This contradicts the above claim and proves the subcase.

\begin{subcase}
$\alpha = \beta = 4$.
\end{subcase}

Suppose $\sU_\phi(y_3) \neq \{1,2,5\}$.   Also suppose $\sU_\phi(y_2) \neq \{3,4,5\}$.  Color $x_1x_2, x_2x_3, x_3x_4, x_4x_0$ with 4, 5, 1, 5, respectively, and color $x_0x_1$ from $\{2,3\}$ with respect to 4 and $\sU_\phi(y_1)$.  This yields a good coloring of $G$.  So $\sU_\phi(y_2) = \{3,4,5\}$.  Color $x_0x_1, x_2x_3, x_3x_4, x_4x_0$ with 2, 1, 5, 3, respectively, and color $x_1x_2$ from $\{4,5\}$ with respect to 2 and $\sU_\phi(y_1)$.  This yields a good coloring of $G$.

So $\sU_\phi(y_3) = \{1,2,5\}$, and by a symmetric argument $\sU_\phi(y_1) = \{1,2,5\}$.  We now color $x_0x_1, x_2x_3, x_3x_4, x_4x_0$ with 3, 4, 1, 5, respectively, and color $x_1x_2$ from $\{2,5\}$ with respect to 4 and $\sU_\phi(y_2)$.  This yields a good coloring of $G$.

\begin{subcase}
$(\alpha,\beta) = (3,4)$.
\end{subcase}

Suppose $\sU_\phi(y_3) \neq \{1,2,5\}$.  Color $x_1x_2, x_2x_3, x_3x_4, x_4x_0$ with 2, 1, 5, 1, respectively, and color$x_0x_1$ from $\{4,5\}$ with respect to 2 and $\sU_\phi(y_1)$.  This yields a good coloring of $G$.

So $\sU_\phi(y_3) = \{1,2,5\}$.  Suppose $\sU_\phi(y_2) \neq \{3,4,5\}$.  Color $x_1x_2, x_2x_3, x_3x_4, x_4x_0$ with 5, 4, 1, 5, respectively, and color $x_0x_1$ from $\{2,4\}$ with respect to 5 and $\sU_\phi(y_1)$.  This yields a good coloring of $G$.

So $\sU_\phi(y_2) = \{3,4,5\}$.  We then color $x_1x_2, x_2x_3, x_3x_4, x_4x_0$ with 2, 4, 5, 1, respectively, and color $x_0x_1$ from $\{4,5\}$ with respect to 2 and $\sU_\phi(y_1)$.  This yields a good coloring of $G$ and proves the subcase.

\begin{subcase}
$(\alpha,\beta) = (4,5)$.
\end{subcase}

\begin{claim}
$\sU_\phi(y_3) = \{1,2,4\}$, and by symmetry $\sU_\phi(y_1) = \{1,2,5\}$.
\end{claim}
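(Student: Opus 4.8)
The plan is to establish the claim by contradiction, exactly in the style of the preceding subcases: assuming $\sU_\phi(y_3)\neq\{1,2,4\}$, I would exhibit an explicit extension of $\phi$ that colors the five still-uncolored edges $x_0x_1,x_1x_2,x_2x_3,x_3x_4,x_4x_0$ and turns $\phi$ into a good coloring of $G$, contradicting the choice of $G$ as a minimal counterexample. Throughout I would use the fixed data of this subcase, namely $\phi(x_1y_1)=1$, $\phi(x_2y_2)=3$, $\phi(x_3y_3)=2$, $\alpha=\phi(x_7x_0)=4$, $\beta=\phi(x_4x_5)=5$, together with the resulting $\sU_\phi(x_7)=\{1,2,4\}$ and $\sU_\phi(x_5)=\{1,2,5\}$. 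The significance of the value $\{1,2,4\}$ is that it equals $\sU_\phi(x_7)$, which is precisely the color set felt at $x_0$ through its neighbor $x_7$.

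First I would color $x_4x_0$ with $3$. Since $\sU_\phi(x_7)=\{1,2,4\}$ and $\sU_\phi(x_5)=\{1,2,5\}$ both miss $3$, this single choice neutralizes the two most delicate intersection constraints, those at $x_0$ (against $x_7$) and at $x_4$ (against $x_5$), leaving only the constraints along the cycle and those at $y_1,y_2,y_3$ to manage. Next I would color $x_2x_3$ and $x_3x_4$ so that $\{\phi(x_2x_3),\phi(x_3x_4)\}=\{1,4\}$. This is the crucial step: because $2\in\sU_\phi(y_3)$ always holds, one has $\{1,4\}\subseteq\sU_\phi(y_3)$ if and only if $\sU_\phi(y_3)=\{1,2,4\}$, so the standing assumption $\sU_\phi(y_3)\neq\{1,2,4\}$ is exactly what guarantees the intersection bound at the edge $x_3y_3$. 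Finally I would color $x_0x_1$ using the ``color $\cdots$ with respect to $\cdots$ and $\sU_\phi(y_1)$'' device introduced after Lemma~\ref{No4cycle}, so as to respect the constraint at $y_1$, and choose $\phi(x_1x_2)$ from $\{2,5\}$ to respect the constraint at $y_2$.

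Because a single rigid assignment cannot simultaneously dodge every possible value of $\sU_\phi(y_1)$ and $\sU_\phi(y_2)$, I expect the argument to branch on these two sets, just as the earlier subcases branch on $\sU_\phi(y_3)$ and $\sU_\phi(y_2)$. In each branch a small modification of the skeleton above --- swapping which of $1,4$ is placed on $x_2x_3$ versus $x_3x_4$, or toggling $\phi(x_1x_2)$ between the two admissible colors --- yields a completion meeting all proper-coloring and $2$-intersection inequalities; for instance the assignment $\phi(x_0x_1)=2$, $\phi(x_1x_2)=4$, $\phi(x_2x_3)=1$, $\phi(x_3x_4)=4$, $\phi(x_4x_0)=3$ disposes of the branch $\sU_\phi(y_1)=\{1,2,5\}$, $\sU_\phi(y_2)\neq\{1,3,4\}$. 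Exhausting the finitely many branches forces $\sU_\phi(y_3)=\{1,2,4\}$.

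For the second assertion I would invoke the reflective symmetry of the configuration rather than repeat the analysis. The map fixing $x_2$ and $x_6$ and swapping $x_0\leftrightarrow x_4$, $x_1\leftrightarrow x_3$, $x_5\leftrightarrow x_7$ (hence $y_1\leftrightarrow y_3$) is an automorphism of the $8$-cycle-plus-chord, and composing it with the color transposition $(1\,2)(4\,5)$ preserves every hypothesis of this subcase: it fixes $\phi(x_2y_2)=3$, interchanges $\phi(x_1y_1)=1$ with $\phi(x_3y_3)=2$, interchanges $\alpha=4$ with $\beta=5$, and interchanges $\sU_\phi(x_7)=\{1,2,4\}$ with $\sU_\phi(x_5)=\{1,2,5\}$. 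Applying the already-proven equality to the transformed configuration sends $\sU_\phi(y_3)=\{1,2,4\}$ to $\sU_\phi(y_1)=\{1,2,5\}$, as claimed. The main obstacle is purely bookkeeping: verifying, in each of the $\sU_\phi(y_1),\sU_\phi(y_2)$ branches, that the chosen completion respects the intersection bounds simultaneously at $x_0$ and $x_4$, whose neighbors $x_7$ and $x_5$ already carry the nearly full color sets $\{1,2,4\}$ and $\{1,2,5\}$.
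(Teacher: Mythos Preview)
Your plan is correct and follows essentially the same approach as the paper's proof. The paper's first branch (assuming $\sU_\phi(y_1)\neq\{1,2,5\}$) uses exactly your skeleton $x_4x_0=3$, $\{x_2x_3,x_3x_4\}=\{4,1\}$, with $x_1x_2$ and $x_0x_1$ drawn from $\{2,5\}$ via the ``with respect to'' device; your explicit example $2,4,1,4,3$ is verbatim the paper's second branch; and the paper's final branch ($\sU_\phi(y_1)=\{1,2,5\}$, $\sU_\phi(y_2)=\{1,3,4\}$) uses a slightly larger tweak than you anticipate, taking $x_2x_3=5$ rather than a value in $\{1,4\}$. Your symmetry argument via the reflection together with the color permutation $(1\,2)(4\,5)$ is a clean way to formalize the paper's bare ``by symmetry.''
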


\begin{proof}
Suppose $\sU_\phi(y_3) \neq \{1,2,4\}$.  Also suppose that $\sU_\phi(y_1) \neq \{1,2,5\}$.  Color $x_2x_3, x_3x_4, x_4x_0$ with 4, 1, 3, respectively, and color $x_1x_2$ (and $x_0x_1$) from $\{2,5\}$ with respect to 4 and $\sU_\phi(y_2)$.  This yields a good coloring of $G$.

So $\sU_\phi(y_1) = \{1,2,5\}$.  Now $\sU_\phi(y_2) = \{1,3,4\}$, otherwise color the cycle in order with 2, 4, 1, 4, 3.  This is a good coloring of $G$.  We then color $x_0x_1, x_1x_2, x_2x_3, x_4x_0$ with 2, 4, 5, 3, respectively, and color $x_3x_4$ from $\{1,4\}$ with respect to 5 and $\sU_\phi(y_3)$.  This yields a good coloring of $G$ and proves the claim.
\end{proof}

Now $\sU_\phi(y_2) = \{3,4,5\}$, otherwise color the cycle in order with 3, 5, 4, 3, 1.  We then color the cycle in order with 5, 4, 1, 3, 2.  This is a good coloring of $G$ and proves the subcase.

Up to relabeling the colors and symmetry, this completes all subcases and proves Case \ref{5.5.1}

\begin{case}\label{5.5.2}
$\sU_\phi(x_7)\setminus\{\alpha\} = \{1,3\}$.
\end{case}

In this case, $\alpha, \beta \in \{2,4,5\}$.   By the construction of $G'$, $\phi(x_2y_2) \in \{3,4,5\}$.  Up to relabeling, we may assume that either $\phi(x_2y_2) = 3$ or $\phi(x_2y_2) = 5$.

\begin{subcase}\label{(1,3).5}
$\phi(x_2y_2) = 5$.
\end{subcase}

\begin{subsubcase}
$\alpha = \beta$.
\end{subsubcase}

Let $\oalpha \in \{4,5\}\setminus\{\alpha\}$.

Suppose $\sU_\phi(y_3) \neq \{1,2,3\}$.  Also suppose $\sU_\phi(y_1) \neq \{1,2,4\}$.  If $\alpha = 2$, color the cycle in order with 4, 2, 1, 3, 5.  If $\alpha \in \{4,5\}$, color $x_0x_1, x_1x_2, x_4x_0$ with 2, 4, $\oalpha$, respectively.  We then color $x_2x_3$ (and $x_3x_4$) from $\{1,3\}$ with respect to 4 and $\sU_\phi(y_2)$.  This yields a good coloring of $G$.  So $\sU_\phi(y_1) = \{1,2,4\}$.  Color $x_0x_1, x_2x_3, x_3x_4, x_4x_0$ with 3, 3, 1, $\oalpha$, respectively, and color $x_1x_2$ from $\{2,4\}$ with respect to 3 and $\sU_\phi(y_2)$.  This yields a good coloring of $G$.

So $\sU_\phi(y_3) = \{1,2,3\}$.  Suppose $\sU_\phi(y_1) \neq \{1,2,3\}$.  If $\alpha = 2$, color the cycle in order with 3, 2, 1, 4, 5.  If $\alpha \in \{4,5\}$, color $x_2x_3, x_3x_4, x_4x_0$ with 4, 1, $\oalpha$, respectively.  We then color $x_1x_2$ (and $x_0x_1$) from $\{2,3\}$ with respect to 4 and $\sU_\phi(y_2)$.  This yields a good coloring of $G$.

So $\sU_\phi(y_1) = \{1,2,3\}$.  We now color $x_0x_1$ and $x_1x_2$ with 3 and 4, respectively.  If $\alpha = 2$, we color $x_4x_5$ and $x_5x_0$ with 5 and 4, respectively.  Otherwise, we color $x_4x_5$ and $x_5x_0$ with $\oalpha$ and 2, respectively.  In both cases we color $x_2x_3$ from $\{1,3\}$ with respect to 4 and $\sU_\phi(y_2)$.   This yields a good coloring of $G$ and proves the subcase.\\

Since $\alpha \neq \beta$, there exists $\gamma$ such that $\{\alpha, \beta, \gamma\} = \{2,4,5\}$.  We now show that in the remaining subcases, we may assume that $\sU_\phi(y_3) = \{1,2,3\}$.  If $\sU_\phi(y_3) \neq \{1,2,3\}$, color $x_0x_1$ and $x_4x_0$ with 3 and $\gamma$, respectively.  Then color $x_1x_2$ from $\{2,4\}$ with respect to 3 and $\sU_\phi(y_1)$.  Let $\delta$ denote the color used on $x_1x_2$.  We then color $x_2x_3$ (and $x_3x_4$) from $\{1,3\}$ with respect to $\delta$ and $\sU_\phi(y_2)$.  This yields a good coloring of $G$.  So $\sU_\phi(y_3) = \{1,2,3\}$, as desired.

\begin{subsubcase}
$\alpha \neq 2$.
\end{subsubcase}

Suppose $\sU_\phi(y_1) \neq \{1,3,4\}$.  Color $x_0x_1, x_1x_2, x_3x_4, x_4x_0$ with 3, 4, $\alpha$, $\gamma$, respectively, and color $x_2x_3$ from $\{1,3\}$ with respect to 4 and $\sU_\phi(y_2)$.  This yields a good coloring of $G$.

So $\sU_\phi(y_1) = \{1,3,4\}$.  If $\alpha = 4$, color the cycle in order with 3, 2, 1, $\alpha$, $\gamma$.  This is a good coloring of $G$.  So $\alpha = 5$ and $\{\beta, \gamma\} = \{2,4\}$.  If $\beta = 2$, color the cycle in order with 4, 2, 1, 4, 3.  This is a good coloring of $G$.  

Thus, $\alpha = 5$ and $\beta = 4$.  Color $x_0x_1, x_1x_2, x_3x_4, x_4x_0$ with 2, 4, 5, 1, respectively, and color $x_2x_3$ from $\{1,3\}$ with respect to 4 and $\sU_\phi(y_2)$. This yields a good coloring of $G$.

\begin{subsubcase}
$\alpha = 2$.
\end{subsubcase}

Here $\beta \in \{4,5\}$.  Suppose first that $\beta = 4$.  Also suppose $\sU_\phi(y_1) \neq \{1,3,4\}$.  Color $x_0x_1, x_1x_2, x_3x_4, x_4x_0$ with 4, 3, 5, 3, respectively, and color $x_2x_3$ from $\{1,4\}$ with respect to 3 and $\sU_\phi(y_2)$.  This yields a good coloring of $G$.

So $\sU_\phi(y_1) = \{1,3,4\}$.  $\sU_\phi(y_2) = \{2,4,5\}$, otherwise color the cycle in order with 4, 2, 4, 3, 5.  We then color the cycle in order with 5, 3, 4, 5, 1.  This is a good coloring of $G$.

So $\beta = 5$.  Now $\sU_\phi(y_1) = \{1,2,4\}$, otherwise we color the cycle in order with 4, 2, 1, 4, 3.  We then color $x_0x_1, x_2x_3, x_3x_4, x_4x_0$ with 5, 4, 3, 4, respectively, and color $x_1x_2$ from $\{2,3\}$ with respect to 4 and $\sU_\phi(y_2)$.  This yields a good coloring of $G$.\\

This completes the proof of Subcase \ref{(1,3).5}

\begin{subcase}\label{(1,3).3}
$\phi(x_2y_2) = 3$.
\end{subcase}

\begin{subsubcase}
$\alpha = \beta = 2$.
\end{subsubcase}

Suppose $\sU_\phi(y_3) \neq \{1,2,4\}$.  Color $x_2x_3, x_3x_4, x_4x_0$ with 4, 1, 5, respectively.  We then color $x_1x_2$ from $\{2,5\}$ with respect to 4 and $\sU_\phi(y_2)$.  Let $\gamma$ denote the color used on $x_1x_2$, and color $x_0x_1$ from $\{3,4\}$ with respect to $\gamma$ and $\sU_\phi(y_1)$.  This yields a good coloring of $G$.

So $\sU_\phi(y_3) = \{1,2,4\}$.  Suppose $\sU_\phi(y_1) \neq \{1,4,5\}$.  Color $x_0x_1, x_1x_2, x_3x_4, x_4x_0$ with 4, 5, 3, 5, respectively, and color $x_2x_3$ from $\{1,4\}$ with respect to 5 and $\sU_\phi(y_2)$.  This yields a good coloring of $G$.

So $\sU_\phi(y_1) = \{1,4,5\}$.  We then color the cycle in order with 4, 2, 1, 5, 3.  This is a good coloring of $G$.

\begin{subsubcase}\label{(1,3)(4,4)}
$\alpha = \beta \in \{4,5\}$.
\end{subsubcase}

Let $\oalpha$ such that $\{\alpha, \oalpha\} = \{4,5\}$.  Suppose $\sU_\phi(y_3) \neq \{1,2,\oalpha\}$.  Color $x_2x_3, x_3x_4, x_4x_0$ with $\oalpha$, 1, 2, respectively.  Color $x_1x_2$ from $\{2,\alpha\}$ with respect to $\oalpha$ and $\sU_\phi(y_2)$.  Let $\gamma$ denote the color used on $x_1x_2$, and color $x_0x_1$ from $\{3,\oalpha\}$ with respect to $\gamma$ and $\sU_\phi(y_1)$.   This yields a good coloring of $G$.

So $\sU_\phi(y_3) = \{1,2,5\}$.  Suppose $\sU_\phi(y_1) \neq \{1,4,5\}$.  Now color $x_0x_1, x_1x_2, x_3x_4, x_4x_0$ with $\oalpha$, $\alpha$, 3, 2, respectively, and color $x_2x_3$ from $\{1,\oalpha\}$ with respect to $\alpha$ and $\sU_\phi(y_2)$.  This yields a good coloring of $G$.

So $\sU_\phi(y_1) = \{1,4,5\}$.  Now $\sU_\phi(y_2) = \{3,4,5\}$, otherwise color the cycle in order with 2, 5, 4, 5, 3.  We then color the cycle in order with 2, 4, 1, 3, 5.  This is a good coloring of $G$.

\begin{subsubcase}
$(\alpha,\beta) = (2,4)$.
\end{subsubcase}

Suppose $\sU_\phi(y_3) \neq \{1,2,4\}$.  Color $x_2x_3, x_3x_4, x_4x_0$ with 4, 1, 5, respectively.  Now color $x_1x_2$ from $\{2,5\}$ with respect to 4 and $\sU_\phi(y_2)$.  Let $\gamma$ denote the color used on $x_1x_2$, and color $x_0x_1$ from $\{3,4\}$ with respect to $\gamma$ and $\sU_\phi(y_1)$.  This yields a good coloring of $G$.

So $\sU_\phi(y_3) = \{1,2,4\}$.  Suppose $\sU_\phi(y_1) \neq \{1,4,5\}$.  Color $x_0x_1, x_1x_2, x_3x_4, x_4x_0$ with 4, 5, 5, 3, respectively, and $x_2x_3$ from $\{1,4\}$ with respect to 5 and $\sU_\phi(y_2)$.  This yields a good coloring of $G$.

So $\sU_\phi(y_1) = \{1,4,5\}$.  We then color the cycle in order with 5, 2, 1, 5, 3.  This is a good coloring of $G$.

\begin{subsubcase}
$(\alpha,\beta) = (4,2)$.
\end{subsubcase}

Suppose $\sU_\phi(y_3) \neq \{1,2,5\}$.  Now $\sU_\phi(y_1) = \{1,2,5\}$, otherwise color the cycle in order wth 5, 2, 1, 5, 3.  $\sU_\phi(y_2) = \{1,3,4\}$, otherwise color the cycle in order with 2, 4, 1, 5, 3.  $\sU_\phi(y_3) = \{1,2,4\}$, otherwise color the cycle in order with 3, 2, 4, 1, 5.  We then color the cycle in order with 5, 4, 5, 4, 3.  These are good colorings of $G$.

So $\sU_\phi(y_3) = \{1,2,5\}$.  Suppose $\sU_\phi(y_2) \neq \{3,4,5\}$.  Color $x_1x_2, x_2x_3, x_3x_4, x_4x_0$ with 5, 4, 1, 5, respectively, and color $x_0x_1$ from $\{2,3\}$ with respect to 5 and $\sU_\phi(y_1)$. This yields a good coloring of $G$.  

So $\sU_\phi(y_2) = \{3,4,5\}$.  By the construction of $G$, $\sU_\phi(y_1) \neq \{1,2,3\}$.  So we color the cycle in order with 3, 2, 4, 1, 5.  This is a good coloring of $G$.

\begin{subsubcase}\label{(1,3)(4,5)}
$(\alpha,\beta) = (4,5)$.
\end{subsubcase}

\begin{claim}
$\sU_\phi(y_1) = \{1,4,5\}$, $\sU_\phi(y_2) = \{3,4,5\}$, and $\sU_\phi(y_3) = \{1,2,4\}$.
\end{claim}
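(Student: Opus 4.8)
The plan is to prove the three equalities by contraposition, pinning the sets in the order $\sU_\phi(y_3)$, then $\sU_\phi(y_1)$, then $\sU_\phi(y_2)$, so that each step may use the sets already determined. Recall the situation: the only uncolored edges are those of the $5$-cycle $x_0x_1x_2x_3x_4$, and the colors already forced around it are $\phi(x_1y_1)=1$, $\phi(x_2y_2)=3$, $\phi(x_3y_3)=2$, $\phi(x_7x_0)=\alpha=4$ and $\phi(x_4x_5)=\beta=5$; moreover $\sU_\phi(x_7)=\{1,3,4\}$ and $\sU_\phi(x_5)=\{1,3,5\}$, so the $x_0$--$x_7$ and $x_4$--$x_5$ adjacencies force at most one of $\phi(x_0x_1),\phi(x_4x_0)$ to lie in $\{1,3\}$ and at most one of $\phi(x_3x_4),\phi(x_4x_0)$ to lie in $\{1,3\}$. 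Since $1\in\sU_\phi(y_1)$, $3\in\sU_\phi(y_2)$ and $2\in\sU_\phi(y_3)$, a coloring with $\{\phi(x_2x_3),\phi(x_3x_4)\}=\{1,4\}$ violates the $x_3$--$y_3$ constraint exactly when $\{1,4\}\subseteq\sU_\phi(y_3)$, i.e. when $\sU_\phi(y_3)=\{1,2,4\}$ (if $y_3$ is a $2$-vertex this is impossible and the constraint is automatic); the analogous remark holds for $y_1$ via the pair $\{\phi(x_0x_1),\phi(x_1x_2)\}$ and for $y_2$ via $\{\phi(x_1x_2),\phi(x_2x_3)\}$. This is the mechanism that forces the three sets.

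For $\sU_\phi(y_3)$: assuming $\sU_\phi(y_3)\neq\{1,2,4\}$, I would color $x_1x_2,x_2x_3,x_3x_4,x_4x_0$ with $2,4,1,2$ respectively and color $x_0x_1$ from $\{3,5\}$ with respect to $2$ and $\sU_\phi(y_1)$; the shorthand disposes of the $x_1$--$y_1$ adjacency for every $\sU_\phi(y_1)$, and a direct check of the remaining adjacencies shows this is good unless additionally $\sU_\phi(y_2)=\{2,3,4\}$, a single residual branch cleared by one more tuple. Hence $\sU_\phi(y_3)=\{1,2,4\}$. For $\sU_\phi(y_1)$: now knowing $\sU_\phi(y_3)=\{1,2,4\}$, I would color the cycle in order with $5,4,1,3,2$; one verifies every adjacency, and the only ones that can fail are $x_1$--$y_1$ and $x_2$--$y_2$, failing precisely when $\sU_\phi(y_1)=\{1,4,5\}$ or $\sU_\phi(y_2)=\{1,3,4\}$, the latter being a short residual branch handled by casing on the finitely many remaining possibilities for $\sU_\phi(y_1)$ with one explicit tuple each. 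Hence $\sU_\phi(y_1)=\{1,4,5\}$. Finally, with $\sU_\phi(y_1)=\{1,4,5\}$ and $\sU_\phi(y_3)=\{1,2,4\}$ in hand, I would color the cycle in order with $3,5,4,3,2$: this gives $\sU_\phi(x_1)=\{1,3,5\}$, $\sU_\phi(x_2)=\{3,4,5\}$ and $\sU_\phi(x_3)=\{2,3,4\}$, and one checks that every adjacency intersection has size at most $2$ except possibly $x_2$--$y_2$, which has size $3$ exactly when $\{4,5\}\subseteq\sU_\phi(y_2)$, i.e. when $\sU_\phi(y_2)=\{3,4,5\}$. Hence $\sU_\phi(y_2)=\{3,4,5\}$, completing the claim.

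The main obstacle is the tightness at the two corners $x_0$ and $x_4$: because $\sU_\phi(x_7)=\{1,3,4\}$ and $\sU_\phi(x_5)=\{1,3,5\}$ while $\phi(x_7x_0)=4$ and $\phi(x_4x_5)=5$, the edge $x_0x_1$ is confined to $\{2,3,5\}$ and $x_4x_0$ to $\{1,2,3\}$, and the second-neighbour inequalities frequently collapse these to a single admissible color. Consequently a clean ``one tuple plus one shorthand'' argument rarely disposes of all of the still-unknown $\sU_\phi(y_i)$ at once, so each of the three steps must branch on the residual possibilities for the not-yet-pinned neighbour set before terminating, each branch being settled by an explicit good coloring. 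Verifying that every branch terminates --- and that the unique configuration surviving all attempted colorings is exactly the stated triple --- is the bulk of the work; it is precisely this surviving triple that the subsequent argument then resolves by recoloring the entire $8$-cycle under $\psi$.
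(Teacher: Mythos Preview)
Your approach is essentially the paper's: force each $\sU_\phi(y_i)$ in turn by exhibiting an explicit good coloring of the $5$-cycle whenever the target equality fails. You pin in the order $y_3,\,y_1,\,y_2$, whereas the paper uses $y_3,\,y_2,\,y_1$; this is immaterial, and indeed your three primary tuples---$(x_1x_2,x_2x_3,x_3x_4,x_4x_0)=(2,4,1,2)$ with $x_0x_1$ from $\{3,5\}$, then the cycle in order $5,4,1,3,2$, then $3,5,4,3,2$---all occur in the paper's proof verbatim (the last two with their roles swapped).

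One bookkeeping slip: the residual branch $\sU_\phi(y_2)=\{2,3,4\}$ after the $y_3$ step is not cleared by ``one more tuple.'' Any single coloring with $\sU_\phi(x_3)=\{1,2,4\}$ (forced so as to exploit $\sU_\phi(y_3)\neq\{1,2,4\}$) leaves at least one value of $\sU_\phi(y_1)$ unhandled once $\sU_\phi(x_2)$ is kept away from $\{2,3,4\}$. The paper uses two tuples here: first $3,5,4,1,2$ to force $\sU_\phi(y_1)=\{1,3,5\}$, then $2,5,1,4,3$ to finish. Your $y_1$-step residual $\sU_\phi(y_2)=\{1,3,4\}$ is genuinely short (two tuples suffice, for instance $2,4,5,4,3$ unless $\sU_\phi(y_1)=\{1,2,4\}$, then $3,5,4,3,2$), so your casing plan there works, just with fewer branches than you suggest. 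These are undercounts, not conceptual gaps; the proposal is correct in substance.
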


\begin{proof}
Suppose first that $\sU_\phi(y_3) \neq \{1,2,4\}$.   $\sU_\phi(y_2) = \{2,3,4\}$, otherwise color $x_1x_2, x_2x_3$, $x_3x_4, x_4x_0$ with 2, 4, 1, 2, respectively, and color $x_0x_1$ from $\{3,5\}$ with respect to 2 and $\sU_\phi(y_1)$.  This yields a good coloring of $G$.  Now $\sU_\phi(y_1) = \{1,3,5\}$, otherwise color the cycle in order with 3, 5, 4, 1, 2.  We then color the cycle in order with 2, 5, 1, 4, 3.  This is a good coloring of $G$.

Thus, $\sU_\phi(y_3) = \{1,2,4\}$.  Now suppose $\sU_\phi(y_2) \neq \{3,4,5\}$.  Then $\sU_\phi(y_1) = \{1,2,4\}$, otherwise color the cycle in order with 2, 4, 5, 4, 3.  We then color the cycle in order with 3, 5, 4, 3, 2.  This is a good coloring of $G$.

Thus, $\sU_\phi(y_2) = \{3,4,5\}$.  Then $\sU_\phi(y_1) = \{1,4,5\}$, otherwise color 5, 4, 1, 3, 2.  This is a good coloring of $G$ and proves the claim.
\end{proof}

To complete this subcase, we will reconsider the good partial coloring of $G$ $\psi$.  Recall that $\{\phi(x_5y_5), \phi(x_7y_7)\} = \{1,3\}$.  So by the construction of $G'$,  $\phi(x_6y_6) \in  \{2,4,5\}$.  Since $\alpha = \phi(x_7x_0) = 4$ and $\beta = \phi(x_4x_5) = 5$, $\phi(x_6y_6) = 2$.  If $\sU_\phi(y_5) \neq \{1,3,4\}$, we could recolor $x_4x_5$ with 4 and proceed as in Subcase \ref{(1,3)(4,4)}.  Similarly, $\sU_\phi(y_7) = \{1,3,5\}$.  

Recall that under $\psi$ the edges of the cycle $x_0x_1\dots x_7$ along with the edge $x_4x_0$ are the remaining uncolored edges.  Thus, when we `color the cycle in order' we color the edges $x_0x_1, x_1x_2, \dots, x_6x_7, x_7x_0$ in this order.  

Suppose $\sU_\psi(y_6) \neq \{2,3,5\}$.  If $(\psi(x_5y_5), \psi(x_7y_7)) = (1,3)$, color $x_4x_0$ with 1 and color the cycle in order with 3, 2, 5, 4, 5, 3, 5, 4.  If $(\psi(x_5y_5), \psi(x_7y_7)) = (3,1)$, color $x_4x_0$ with 2 and color the cycle in order with 3, 2, 5, 4, 1, 5, 3, 4.  In either case, this is a good coloring of $G$.

So $\sU_\psi(y_6) = \{2,3,5\}$.  If $(\psi(x_5y_5), \psi(x_7y_7)) = (1,3)$, color $x_4x_0$ with 1 and color the cycle in order with 2, 5, 1, 3, 5, 4, 1, 4.  If $(\psi(x_5y_5), \psi(x_7y_7)) = (3,1)$, color $x_4x_0$ with 4 and color the cycle in order with 2, 5, 1, 3, 5, 1, 4, 3.  In either case, this is a good coloring of $G$.\\

Up to relabeling the colors and symmetry, this completes the proof of Subcase \ref{(1,3).3}, and so completes the proof of Case \ref{5.5.2}.

\begin{case}\label{5.5.3}
$\sU_\phi(x_7)\setminus\{\alpha\} = \{4,5\}$.
\end{case}

In this case, $\alpha, \beta \in \{1,2,3\}$.   By the construction of $G$, $\phi(x_2y_2) \in \{3,4,5\}$.  Up to relabeling, we may assume that $\phi(x_2y_2) \in \{3,5\}$.

\begin{subcase}\label{(4,5).3}
$\phi(x_2y_2) = 3$.
\end{subcase}

\begin{subsubcase}
$\alpha = \beta$.
\end{subsubcase}

\begin{claim}
$\sU_\phi(y_3) = \{2,4,5\}$.
\end{claim}

\begin{proof}
Let $\{\alpha_1, \alpha_2, \alpha_3\} = \{1,2,3\}$ so that without loss of generaltiy, $\alpha = \alpha_1$.  Suppose $\sU_\phi(y_3) \neq \{2,4,5\}$.  Also suppose $\sU_\phi(y_2) \neq \{3,4,5\}$.  Color $x_0x_1$ from $\{alpha_2,\alpha_3\}\setminus\{1\}$.  Without loss of generaltiy, assume $x_0x_1$ is colored with $\alpha_2$.  We then color $x_4x_0$ with $\alpha_3$.  Color $x_1x_2, x_3x_4$ (and $x_2x_3$) from $\{4,5\}$ with respect to $\alpha_2$ and $\sU_\phi(y_1)$.  This yields a good coloring of $G$.

So $\sU_\phi(y_2) = \{3,4,5\}$.  Color $x_1x_2$ with 2.  If $\alpha_1  = 3$, color $x_4x_0$ with $\alpha_2$.  Otherwise, color $x_4x_0$ with 3.  We then color $x_0x_1, x_2x_3$ (and $x_3x_4$) from $\{4,5\}$ with respect to 2 and $\sU_\phi(y_1)$.  This yields a good coloring of $G$ and proves the claim.
\end{proof}

We now color $x_1x_2$ and $x_2x_3$ with 2 and 1, respectively.  If $\alpha_1 = 3$, color $x_4x_0$ with $\alpha_2$, and color $x_0x_1$ (and $x_3x_4$) from $\{4,5\}$ with respect to 2 and $\sU_\phi(y_1)$.  Otherwise, color $x_4x_0$ with 3, and color $x_0x_1$ (and $x_3x_4$) from $\{4,5\}$ with respect to 2 and $\sU_\phi(y_1)$.  These are good colorings of $G$.

\begin{subsubcase}
$\{\alpha,\beta\} = \{1,2\}$.  
\end{subsubcase}

Color $x_1x_2, x_2x_3, x_4x_0$ with 2, 1, 3, respectively.  We then color $x_0x_1$ from $\{4,5\}$ with respect to 2 and $\sU_\phi(y_1)$, and color $x_3x_4$ from $\{4,5\}$ with respect to 1 and $\sU_\phi(y_1)$.  This yields a good coloring of $G$.

\begin{subsubcase}
$(\alpha,\beta) = (2,3)$.
\end{subsubcase}

\begin{claim}
$\sU_\phi(y_3) = \{2,4,5\}$.
\end{claim}

\begin{proof}
Suppose $\sU_\phi(y_3) \neq \{2,4,5\}$.  Also suppose $\sU_\phi(y_2) \neq \{3,4,5\}$.  Color $x_0x_1$ and $x_4x_0$ with 3 and 1, respectively.  Now color $x_1x_2, x_3x_4$ (and $x_2x_3$) from $\{4,5\}$ with respect to 3 and $\sU_\phi(y_1)$.  This yields a good coloring of $G$.

So $\sU_\phi(y_2) = \{3,4,5\}$.  Suppose $\sU_\phi(y_1) \neq \{1,4,5\}$.  Color $x_0x_1, x_1x_2, x_2x_3, x_4x_0$ with 4, 5, 1, 1, respectively, and color $x_3x_4$ from $\{4,5\}$ with respect to 1 and $\sU_\phi(y_3)$.  This yields a good coloring of $G$.

So $\sU_\phi(y_1) = \{1,4,5\}$.  Color $x_0x_1, x_1x_2, x_3x_4$ with 3, 2, 1, respectively.  We then color $x_2x_3$ (and $x_4x_0$) from $\{4,5\}$ with respect to 1 and $\sU_\phi(y_3)$.  This yields a good coloring of $G$ and proves the claim.
\end{proof}

Suppose $\sU_\phi(y_1) \neq \{1,4,5\}$.  Color $x_2x_3, x_3x_4, x_4x_0$ with 1, 4, 1, respectively.  We then color $x_1x_2$ (and $x_0x_1$) from $\{4,5\}$ with respect to 1 and $\sU_\phi(y_2)$.  This yields a good coloring of $G$.

So $\sU_\phi(y_1) = \{1,4,5\}$.  Color $x_0x_1, x_1x_2, x_3x_4, x_4x_0$ with 3, 2, 1, 4, respectively, and color $x_2x_3$ from $\{4,5\}$ with respect to 2 and $\sU_\phi(y_2)$.  This yields a good coloring of $G$.

\begin{subsubcase}
$(\alpha,\beta) = (1,3)$.
\end{subsubcase}

\begin{claim}
$\sU_\phi(y_1) = \{1,4,5\}$ and $\sU_\phi(y_2) = \{3,4,5\}$.
\end{claim}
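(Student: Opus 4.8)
The plan is to prove the two equalities in turn, each by contradiction: if either fails I will extend $\phi$ to a good coloring of $G$, contradicting the minimality of $G$. Recall that every edge of $G$ except those of the $5$-cycle $x_0x_1x_2x_3x_4$ is colored by $\phi$, so it remains only to color this cycle, and when I ``color the cycle in order with $a,b,c,d,e$'' I mean $\phi(x_0x_1)=a$, $\phi(x_1x_2)=b$, $\phi(x_2x_3)=c$, $\phi(x_3x_4)=d$, $\phi(x_4x_0)=e$. The already-colored edges give the hard constraints: since $\phi(x_7x_0)=\alpha=1$ and $\phi(x_4x_5)=\beta=3$, neither $x_0x_1$ nor $x_4x_0$ may be colored $1$ and neither $x_3x_4$ nor $x_4x_0$ may be colored $3$; since $\sU_\phi(x_7)=\{1,4,5\}$ and $\sU_\phi(x_5)=\{3,4,5\}$ we need $\{\phi(x_0x_1),\phi(x_4x_0)\}\neq\{4,5\}$ and $\{\phi(x_3x_4),\phi(x_4x_0)\}\neq\{4,5\}$; and the chord $x_4x_0$ forces $|\sU_\phi(x_0)\cap\sU_\phi(x_4)|\le 2$. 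Apart from these, the only soft constraints are the three intersection conditions at $y_1,y_2,y_3$, and the $i$-th of these fails only if the fixed set $\sU_\phi(y_i)$ equals $\sU_\phi(x_i)$.

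I would first establish $\sU_\phi(y_2)=\{3,4,5\}$. Assume not. If $\sU_\phi(y_3)\neq\{2,4,5\}$, color the cycle in order with $c,4,5,4,2$, where $c\in\{3,5\}$ is chosen so that $\sU_\phi(x_1)$ (which is $\{1,3,4\}$ or $\{1,4,5\}$) differs from $\sU_\phi(y_1)$; this gives $\sU_\phi(x_2)=\{3,4,5\}$ and $\sU_\phi(x_3)=\{2,4,5\}$, and one checks every relevant intersection has size at most two, so it is good under the two assumptions on $y_2$ and $y_3$. In the residual branch $\sU_\phi(y_3)=\{2,4,5\}$, color in order with $4,5,4,1,2$ when $\sU_\phi(y_1)\neq\{1,4,5\}$ and with $2,4,5,1,5$ when $\sU_\phi(y_1)=\{1,4,5\}$; a direct check shows both are good colorings. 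Thus every configuration with $\sU_\phi(y_2)\neq\{3,4,5\}$ admits a good coloring, a contradiction, so $\sU_\phi(y_2)=\{3,4,5\}$.

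Next, using $\sU_\phi(y_2)=\{3,4,5\}$, I would establish $\sU_\phi(y_1)=\{1,4,5\}$. Assume $\sU_\phi(y_1)\neq\{1,4,5\}$. Color the cycle in order with $4,5,1,d,2$, where $d\in\{4,5\}$ is chosen so that $\sU_\phi(x_3)$ (which is $\{1,2,4\}$ or $\{1,2,5\}$) differs from $\sU_\phi(y_3)$; this makes $\sU_\phi(x_1)=\{1,4,5\}$, safe because $\sU_\phi(y_1)\neq\{1,4,5\}$, and $\sU_\phi(x_2)=\{1,3,5\}$, safe because $\sU_\phi(y_2)=\{3,4,5\}$. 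Since $\sU_\phi(y_3)$ cannot equal both $\{1,2,4\}$ and $\{1,2,5\}$, one of the two choices of $d$ always yields a good coloring, the desired contradiction; hence $\sU_\phi(y_1)=\{1,4,5\}$, which completes the claim.

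The main difficulty is coordinating the three soft constraints at $y_1,y_2,y_3$ under these rigid hard constraints: the chord $x_4x_0$, together with the two conditions forbidding $\{4,5\}$ on $\{\phi(x_0x_1),\phi(x_4x_0)\}$ and on $\{\phi(x_3x_4),\phi(x_4x_0)\}$, pins $\phi(x_4x_0)$ to very few values and couples the choices at $x_1$ and $x_3$ through $x_0$ and $x_4$. Consequently each residual value of $\sU_\phi(y_3)$ (and the borderline value $\sU_\phi(y_1)=\{1,4,5\}$ in the first step) must be dispatched by its own explicitly exhibited coloring, and verifying that the only configuration surviving every attempt is exactly $\sU_\phi(y_1)=\{1,4,5\}$ and $\sU_\phi(y_2)=\{3,4,5\}$ is where essentially all the work lies.
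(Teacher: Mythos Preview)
Your proof is correct and follows essentially the same approach as the paper: explicit case analysis, exhibiting a concrete good coloring of the $5$-cycle for each branch to force the desired values of $\sU_\phi(y_1)$ and $\sU_\phi(y_2)$. The only difference is cosmetic: you establish $\sU_\phi(y_2)=\{3,4,5\}$ first and then $\sU_\phi(y_1)=\{1,4,5\}$, whereas the paper does the reverse order; several of the explicit colorings (e.g., $3,4,5,4,2$ and $2,4,5,1,5$) even coincide with the paper's.
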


\begin{proof}
Suppose first that $\sU_\phi(y_1) \neq \{1,4,5\}$.  Also suppose $\sU_\phi(y_2) \neq \{3,4,5\}$.  Color $x_3x_4$ and $x_4x_0$ with 1 and 2, respectively.  We then color $x_2x_3, x_0x_1$ (and $x_1x_2$) from $\{4,5\}$ with respect to 1 and $\sU_\phi(y_3)$.  This yields a good coloring of $G$.

So $\sU_\phi(y_2) = \{3,4,5\}$.  Color $x_0x_1, x_1x_2, x_2x_3, x_4x_0$ with 4, 5, 1, 2, respectively, and $x_3x_4$ from $\{4,5\}$ with respect to 1 and $\sU_\phi(y_3)$.  This yields a good coloring of $G$.

Thus, $\sU_\phi(y_1) = \{1,4,5\}$.  Now suppose $\sU_\phi(y_2) \neq \{3,4,5\}$.  Then $\sU_\phi(y_3) = \{2,4,5\}$, otherwise color the cycle in order with 3, 4, 5, 4, 2.  We then color the cycle in order with 2, 4, 5, 1, 5.  This is a good coloring of $G$ and proves the claim.
\end{proof}

To complete this subcase, we will reconsider the good partial coloring of $G$ $\psi$.   In a manner similar to that in Subcase \ref{(1,3)(4,5)}, we deduce that $\sU_\phi(y_5) = \{1,4,5\}$, $\sU_\phi(y_7) = \{3,4,5\}$, and $\phi(x_6y_6) = 2$.  We now recolor the edges of the cycle $x_0x_1 \dots x_7$ and the edge $x_4x_0$.

If $(\psi(x_5y_5), \psi(x_7y_7)) = (5,4)$, suppose $\sU_\psi(y_6) \neq \{2,3,5\}$.  Then color $x_0x_1, x_1x_2$, $x_3x_4, x_4x_5$, $x_5x_6$, $x_6x_7$, $x_7x_0, x_4x_0$ with 5, 2, 1, 4, 3, 5, 1, 3, respectively, and color $x_2x_3$ from $\{4,5\}$ with respect to 1 and $\sU_\phi(y_3)$.  This yields a good coloring of $G$.  

So $\sU_\phi(y_6) = \{2,3,5\}$.  We then color $x_0x_1, x_1x_2, x_3x_4, x_4x_5, x_5x_6, x_6x_7, x_7x_0, x_4x_0$ with 4, 2, 1, 3, 4, 1, 5, 2, respectively, and color $x_2x_3$ from $\{4,5\}$ with respect to 1 and $\sU_\phi(y_3)$.  This yields a good coloring of $G$.

A similar argument holds for $(\psi(x_5y_5), \psi(x_7y_7)) = (4,5)$ when considering whether or not $\sU_\phi(y_6)$ is $\{2,3,4\}$ by switching the roles of 4 and 5.\\

Up to relabeling the colors and symmetry, this completes the proof of Subcase \ref{(4,5).3}.

\begin{subcase}\label{(4,5).5}
$\phi(x_2y_2) = 5$.
\end{subcase}

\begin{subsubcase}
$\alpha = \beta = 1$.
\end{subsubcase}

Suppose $\sU_\phi(y_3) \neq \{2,3,4\}$.  Also suppose $\sU_\phi(y_2) \neq \{3,4,5\}$.  Color $x_0x_1$ and $x_4x_0$ with 5 and 2, respectively.  We then color $x_1x_2, x_3x_4$ (and $x_2x_3$) from $\{3,4\}$ with respect to 5 and $\sU_\phi(y_1)$.  This yields a good coloring of $G$.   So $\sU_\phi(y_2) = \{3,4,5\}$.  Now $\sU_\phi(y_1) = \{1,2,3\}$, otherwise color the cycle in order with 2, 3, 1, 3, 4.  This is a good coloring of $G$.  By the construction of $G'$, $\sU_\phi(y_3) \neq \{1,2,5\}$.  So we color the cycle in order with 2, 4, 1, 5, 3.  This is a good coloring of $G$.

So $\sU_\phi(y_3) = \{2,3,4\}$.  We now color $x_0x_1, x_2x_3, x_3x_4, x_4x_0$ with 2, 1, 3, 5, respectively.  If $2 \notin \sU_\phi(y_1)$, we color $x_1x_2$ from $\{3,4\}$ with respect to 1 and $\sU_\phi(y_2)$.  This yields a good coloring of $G$.

So $2 \in \sU_\phi(y_1)$, and by a similar argument $1 \in \sU_\phi(y_2)$.  We then color the cycle in order with 3, 4, 3, 5, 2.  This is a good coloring of $G$.\\

A symmetric argument holds when $\alpha = \beta = 2$.

\begin{subsubcase}
$\alpha = \beta = 3$.
\end{subsubcase}

Suppose $\sU_\phi(y_3) \neq \{1,2,4\}$.  Color $x_2x_3, x_3x_4, x_4x_0$ with 4, 1, 2, respectively.  We then color $x_1x_2$ from $\{2,3\}$ with respect to 4 and $\sU_\phi(y_2)$.  Let $\gamma$ denote the color used on $x_1x_2$, and color $x_0x_1$ from $\{4,5\}$ with respect to $\gamma$ and $\sU_\phi(y_1)$.  This yields a good coloring of $G$.

So $\sU_\phi(y_3) = \{1,2,4\}$.  Suppose $\sU_\phi(y_1) \neq \{1,3,4\}$.  Color $x_0x_1, x_1x_2, x_3x_4, x_4x_0$ with 4, 3, 5, 2, respectively, and color $x_2x_3$ from $\{1,4\}$ with respect to 3 and $\sU_\phi(y_2)$.  This yields a good coloring of $G$.

So $\sU_\phi(y_1) = \{1,3,4\}$.  Now $\sU_\phi(y_2) = \{3,4,5\}$, otherwise color the cycle in order with 2, 4, 3, 1, 5.  We then color the cycle in order with 5, 2, 3, 4, 1.  This is a good coloring of $G$.

\begin{subsubcase}
$(\alpha,\beta) = (1,2)$.
\end{subsubcase}

\begin{claim}
$\sU_\phi(y_3) = \{1,2,4\}$.
\end{claim}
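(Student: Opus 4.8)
The plan is to prove the claim by contradiction, in the usual style of these appendix arguments: assuming $\sU_\phi(y_3)\neq\{1,2,4\}$, I would extend $\phi$ to colour the five remaining edges $x_0x_1,x_1x_2,x_2x_3,x_3x_4,x_4x_0$ of the $5$-cycle, producing a good colouring of $G$ and contradicting minimality. First I would record every constraint. From $\phi(x_1y_1)=1$ and $\alpha=1$, the edge $x_0x_1$ avoids $1$; from $\phi(x_2y_2)=5$, the edge $x_1x_2$ avoids $\{1,5\}$ and $x_2x_3$ avoids $\{2,5\}$; and $\beta=2$ forces $x_3x_4,x_4x_0$ to avoid $2$ with $x_4x_0$ also avoiding $1$. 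Because $\sU_\phi(x_7)=\{1,4,5\}$ and $\sU_\phi(x_5)=\{2,4,5\}$, any extension must keep $\sU_\phi(x_0)\neq\{1,4,5\}$, $\sU_\phi(x_4)\neq\{2,4,5\}$, and, since $x_0x_4\in E(G)$, also $\sU_\phi(x_0)\neq\sU_\phi(x_4)$. Finally I would invoke the standing fact from the construction of $G'$ that $\sU_\phi(y_i)\neq\{1,2,5\}$ for $i\in\{1,2,3\}$, together with the requirement $\sU_\phi(x_i)\neq\sU_\phi(y_i)$ whenever $y_i$ is a $3$-vertex.

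The guiding idea is to arrange in \emph{every} attempt that $\sU_\phi(x_3)=\{1,2,4\}$, so that the hypothesis $\sU_\phi(y_3)\neq\{1,2,4\}$ discharges the constraint at $x_3$ for free and all attempts reduce to controlling $x_1$ and $x_2$. I would then branch on $\sU_\phi(y_1)$. If $\sU_\phi(y_1)\neq\{1,2,4\}$, colouring the cycle in order with $4,2,1,4,3$ works, since it gives $\sU_\phi(x_1)=\{1,2,4\}$, $\sU_\phi(x_2)=\{1,2,5\}$ (harmless as $\sU_\phi(y_2)\neq\{1,2,5\}$), and $\sU_\phi(x_3)=\{1,2,4\}$. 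So I may assume $\sU_\phi(y_1)=\{1,2,4\}$ and branch on $\sU_\phi(y_2)$: if $\sU_\phi(y_2)\neq\{2,4,5\}$, the order $3,2,4,1,4$ works ($\sU_\phi(x_1)=\{1,2,3\}$, $\sU_\phi(x_2)=\{2,4,5\}$, $\sU_\phi(x_3)=\{1,2,4\}$); and in the final case $\sU_\phi(y_2)=\{2,4,5\}$ the order $2,3,1,4,3$ works ($\sU_\phi(x_1)=\{1,2,3\}$, $\sU_\phi(x_2)=\{1,3,5\}$, $\sU_\phi(x_3)=\{1,2,4\}$). In each case one checks the face conditions $\sU_\phi(x_0)\neq\{1,4,5\}$, $\sU_\phi(x_4)\neq\{2,4,5\}$, $\sU_\phi(x_0)\neq\sU_\phi(x_4)$ directly. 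Since all three colourings are good, $\sU_\phi(y_3)=\{1,2,4\}$.

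The hard part will be bookkeeping the constraints at $x_0$ and $x_4$ simultaneously. Here, unlike in the earlier cases, \emph{both} $x_0$ and $x_4$ see a vertex of the opposite $5$-face, namely $x_7$ and $x_5$, whose colour sets are the full triples $\{1,4,5\}$ and $\{2,4,5\}$; hence no admissible colouring may place $\{4,5\}$ on $\{x_0x_1,x_4x_0\}$ or on $\{x_3x_4,x_4x_0\}$. Moreover $x_0$ and $x_4$ are themselves adjacent, which forbids the coincidence $\sU_\phi(x_0)=\sU_\phi(x_4)$, and this is the condition easiest to overlook: the natural-looking pattern $2,3,4,1,4$ satisfies every face and $y_i$ constraint yet fails because $\sU_\phi(x_0)=\sU_\phi(x_4)=\{1,2,4\}$, which is exactly why the last sub-case must be coloured $2,3,1,4,3$ rather than the more obvious choice.
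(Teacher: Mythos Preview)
Your proof is correct and follows the same overall contradiction-by-case-analysis template as the paper: assume $\sU_\phi(y_3)\neq\{1,2,4\}$, branch on whether $\sU_\phi(y_1)=\{1,2,4\}$, and in the affirmative case branch again on whether $\sU_\phi(y_2)=\{2,4,5\}$. The colourings you propose all check out.

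The one noteworthy difference is your ``guiding idea'' of forcing $\sU_\phi(x_3)=\{1,2,4\}$ in every branch. The paper does this in its first two branches but abandons it in the third, colouring the cycle $2,3,4,3,5$ (which gives $\sU_\phi(x_3)=\{2,3,4\}$) and therefore needing an additional split on whether $\sU_\phi(y_3)=\{2,3,4\}$, with a fourth colouring $3,2,1,3,4$ to finish. Your systematic constraint lets you dispatch the final case with the single colouring $2,3,1,4,3$, so you use three explicit colourings where the paper uses four. This is a modest but genuine streamlining of the same argument.
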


\begin{proof}
Suppose $\sU_\phi(y_3) \neq \{1,2,4\}$.  Also suppose $\sU_\phi(y_1) \neq \{1,2,4\}$.  Color $x_0x_1, x_1x_2, x_4x_0$ with 4, 2, 3, respectively.  We then color $x_2x_3$ (and $x_3x_4$) from $\{1,4\}$ with respect to 2 and $\sU_\phi(y_2)$.  This yields a good coloring of $G$.

So $\sU_\phi(y_1) = \{1,2,4\}$.  Then $\sU_\phi(y_2) = \{2,4,5\}$, otherwise color the cycle in order with 3, 2, 4, 1, 5.  $\sU_\phi(y_3) = \{2,3,4\}$, otherwise color the cycle in order with 2, 3, 4, 3, 5.  We then color the cycle in order with 3, 2, 1, 3, 4.  These are good colorings of $G$ and prove the claim.
\end{proof}

Suppose $\sU_\phi(y_1) \neq \{1,2,3\}$.  Color $x_2x_3, x_3x_4, x_4x_0$ with 4, 3, 5, respectively. We then color $x_1x_2$ (and $x_0x_1$ from $\{2,3\}$ with respect to 4 and $\sU_\phi(y_2)$.  This yields a good coloring of $G$.  

So $\sU_\phi(y_1) = \{1,2,3\}$.  Color $x_0x_1, x_1x_2, x_4x_0$ with 3, 4, 5, respectively.  We then color $x_2x_3$ (and $x_3x_4$) from $\{1,3\}$ with respect to 4 and $\sU_\phi(y_2)$.  This yields a good coloring of $G$.

\begin{subsubcase}
$(\alpha,\beta) = (2,1)$.
\end{subsubcase}

\begin{claim}
$\sU_\phi(y_3) = \{2,3,4\}$.
\end{claim}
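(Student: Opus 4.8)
The plan is to establish this claim by contraposition, in exactly the style of the preceding subsubcases: assuming $\sU_\phi(y_3) \neq \{2,3,4\}$, I would extend $\phi$ to a good coloring of $G$ by coloring the one remaining $5$-cycle $x_0x_1x_2x_3x_4$, contradicting the choice of $G$, and hence conclude $\sU_\phi(y_3) = \{2,3,4\}$. Recall the data inherited by this subsubcase: $\phi(x_1y_1) = 1$, $\phi(x_3y_3) = 2$, $\phi(x_2y_2) = 5$, and $(\alpha,\beta) = (\phi(x_7x_0),\phi(x_4x_5)) = (2,1)$; moreover $\sU_\phi(x_7) = \{2,4,5\}$ and $\sU_\phi(x_5) = \{1,4,5\}$, while the only uncolored edges are those of the cycle $x_0x_1x_2x_3x_4$ (recall $x_4x_0 \in E(G)$, so ``color the cycle in order'' here means color $x_0x_1, x_1x_2, x_2x_3, x_3x_4, x_4x_0$ in this order).

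First I would record the constraints governing any extension. At $x_3$ we have $2 \in \sU_\phi(y_3)$, so to keep $|\sU_\phi(x_3)\cap\sU_\phi(y_3)| \le 2$ at most one of $x_2x_3, x_3x_4$ may receive a color of $\sU_\phi(y_3)$; analogous statements hold at $x_2$ (with the color $5$ and $y_2$) and at $x_1$ (with $1$ and $y_1$). Since $\sU_\phi(x_7) = \{2,4,5\} \supseteq \{4,5\}$ and $2 \in \sU_\phi(x_0)$ via $x_7x_0$, at most one of $x_0x_1, x_4x_0$ may be colored from $\{4,5\}$; symmetrically, since $\sU_\phi(x_5) = \{1,4,5\}$ and $1 = \phi(x_4x_5) \in \sU_\phi(x_4)$, at most one of $x_3x_4, x_4x_0$ may be colored from $\{4,5\}$. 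The idea is to force $\sU_\phi(x_3) = \{2,3,4\}$ by coloring $\{x_2x_3, x_3x_4\}$ with $\{3,4\}$, so that the assumption $\sU_\phi(y_3) \neq \{2,3,4\}$ exactly rescues the constraint at $x_3$. Concretely, coloring the cycle in order with $4,2,3,4,3$ is a good coloring of $G$ provided $\sU_\phi(y_1) \neq \{1,2,4\}$ and $\sU_\phi(y_2) \neq \{2,3,5\}$: one checks that this coloring is proper and that $\sU_\phi(x_0)=\{2,3,4\}$, $\sU_\phi(x_1)=\{1,2,4\}$, $\sU_\phi(x_2)=\{2,3,5\}$, $\sU_\phi(x_3)=\{2,3,4\}$, $\sU_\phi(x_4)=\{1,3,4\}$ each meet every neighbor (including $x_5,x_7$ and the $y_i$) in at most two colors.

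It then remains to dispose of the two exceptional situations $\sU_\phi(y_1) = \{1,2,4\}$ and $\sU_\phi(y_2) = \{2,3,5\}$, which I would handle by branching and exhibiting alternative colorings, again keeping $\{x_2x_3,x_3x_4\}$ colored by $\{3,4\}$ so that the $y_3$-constraint stays rescued. For instance, when $\sU_\phi(y_2) = \{2,3,5\}$ the color on $x_1x_2$ is forced to $4$, leading to the coloring $5,4,3,4,3$, after which a short further split on $\sU_\phi(y_1)$ (using the shorthand ``color $\cdots$ from $\{\alpha,\beta\}$ with respect to $\gamma$ and $\sU_\phi(y_i)$'') finishes; the case $\sU_\phi(y_1)=\{1,2,4\}$ is treated in the same spirit. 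In every branch the outcome is a good coloring of $G$, contradicting minimality, so $\sU_\phi(y_3) = \{2,3,4\}$. The main obstacle is purely combinatorial bookkeeping: the two ``$\{4,5\}$-pair'' constraints at $x_0$ and $x_4$ (forced because $\sU_\phi(x_7)$ and $\sU_\phi(x_5)$ both contain $\{4,5\}$) tightly restrict the colors available on $x_0x_1$ and $x_3x_4$, and reconciling them with the three $y_i$-constraints is what multiplies into the small case tree on $\sU_\phi(y_1)$ and $\sU_\phi(y_2)$; no single coloring covers all admissible values of these sets, so a short dedicated branch for each exceptional value is unavoidable.
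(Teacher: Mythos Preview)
Your approach is correct and essentially matches the paper's: assume $\sU_\phi(y_3)\neq\{2,3,4\}$, force $\sU_\phi(x_3)=\{2,3,4\}$ by coloring $\{x_2x_3,x_3x_4\}$ from $\{3,4\}$, and branch on $\sU_\phi(y_2)=\{2,3,5\}$ and then on $\sU_\phi(y_1)$.

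The one efficiency you are missing is that the paper colors $x_1x_2,x_2x_3,x_3x_4,x_4x_0$ with $2,3,4,3$ and then chooses the color of $x_0x_1$ adaptively \emph{from $\{4,5\}$ with respect to $2$ and $\sU_\phi(y_1)$}. This absorbs your separate $\sU_\phi(y_1)=\{1,2,4\}$ branch into a single step, leaving only the $\sU_\phi(y_2)=\{2,3,5\}$ case; there the paper uses exactly your coloring $5,4,3,4,3$ (which fails only when $\sU_\phi(y_1)=\{1,4,5\}$), and then finishes that last configuration with $3,2,4,3,4$. So your sketch is right, but you can collapse the case tree by applying the paper's ``from $\{\alpha,\beta\}$ with respect to $\gamma$ and $\sU_\phi(y)$'' device at $x_0x_1$ rather than fixing $x_0x_1=4$.
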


\begin{proof}
Suppose $\sU_\phi(y_3) \neq \{2,3,4\}$.  Also suppose $\sU_\phi(y_2) \neq \{2,3,5\}$.  Color $x_1x_2, x_2x_3, x_3x_4, x_4x_0$ with 2, 3, 4, 3, respectively, and color $x_0x_1$ from $\{4,5\}$ with respect to 2 and $\sU_\phi(y_1)$.  This yields a good coloring of $G$.

So $\sU_\phi(y_2) = \{2,3,5\}$.  Also $\sU_\phi(y_1) = \{1,4,5\}$, otherwise color the cycle in order with 5, 4, 3, 4, 3.  We then color the cycle in order with 3, 2, 4, 3, 4.  These are good colorings of $G$ and prove the claim.
\end{proof}

Color $x_1x_2, x_2x_3$ with 2 and 1, respectively.  Color $x_0x_1$ from $\{3,4\}$ with respect to 2 and $\sU_\phi(y_1)$.  If $x_0x_1$ is colored with 3, color $x_3x_4$ and $x_4x_0$ with 3 and 5, respectively.  If $x_0x_1$ is colored with 4, color $x_3x_4$ and $x_4x_0$ with 4 and 3, respectively.  In either case, this is a good coloring of $G$.

\begin{subsubcase}
$(\alpha,\beta) = (3,1)$.
\end{subsubcase}

Suppose $\sU_\phi(y_3) \neq \{2,3,4\}$.  Color both $x_1x_2$ and $x_4x_0$ with 2. Now color $x_0x_1$ from $\{4,5\}$ with respect to 2 and $\sU_\phi(y_1)$.  We then color $x_2x_3$ (and $x_3x_4$) from $\{3,4\}$ with respect to 2 and $\sU_\phi(y_2)$.  This yields a good coloring of $G$.

Color $x_0x_1, x_2x_3, x_3x_4, x_4x_0$ with 2, 1, 3, 4, respectively.  If $2 \notin \sU_\phi(y_1)$, color $x_1x_2$ from $\{3,4\}$ with respect to 1 and $\sU_\phi(y_2)$.  This yields a good coloring of $G$.

So $2 \in \sU_\phi(y_1)$, and by a similar argument $1 \in \sU_\phi(y_2)$.  We then color the cycle in order with 4, 3, 4, 5, 2.  This is a good coloring of $G$.

\begin{subsubcase}
$(\alpha,\beta) = (1,3)$.
\end{subsubcase}

\begin{claim}
$\sU_\phi(y_1) = \{1,3,4\}$ and $\sU_\phi(y_2) = \{3,4,5\}$.
\end{claim}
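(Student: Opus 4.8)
The plan is to prove the contrapositive for each of the two asserted equalities in turn: if $\sU_\phi(y_1)\neq\{1,3,4\}$ (and, once that is dispatched, if $\sU_\phi(y_2)\neq\{3,4,5\}$), then $\phi$ extends to a good coloring of the five uncolored edges $x_0x_1,x_1x_2,x_2x_3,x_3x_4,x_4x_0$, hence of $G$, contradicting minimality. Throughout I would track the constraints on a quintuple $(\phi(x_0x_1),\dots,\phi(x_4x_0))$: it must be a proper coloring avoiding the colors of the incident fixed edges ($1$ on $x_7x_0$ and $x_1y_1$, $5$ on $x_2y_2$, $2$ on $x_3y_3$, $3$ on $x_4x_5$); since $\sU_\phi(x_7)=\{1,4,5\}$ and $\sU_\phi(x_5)=\{3,4,5\}$, at most one of the two boundary edges at $x_0$ and at most one of the two at $x_4$ may take a color from $\{4,5\}$; the three bounds $|\sU_\phi(x_i)\cap\sU_\phi(y_i)|\le 2$ for $i\in\{1,2,3\}$ must hold; and, from the construction of $G'$ (the vertex $u$ has $\sU_\phi(u)=\{1,2,5\}$ and is adjacent to each $y_i$), none of $\sU_\phi(y_1),\sU_\phi(y_2),\sU_\phi(y_3)$ equals $\{1,2,5\}$.

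To establish $\sU_\phi(y_1)=\{1,3,4\}$, I would assume $\sU_\phi(y_1)\neq\{1,3,4\}$ and split on $\sU_\phi(y_2)$. If $\sU_\phi(y_2)\neq\{3,4,5\}$, color $x_0x_1,x_1x_2,x_2x_3,x_4x_0$ with $4,3,4,2$ and then color $x_3x_4$ from $\{1,5\}$ with respect to $4$ and $\sU_\phi(y_3)$: here $\sU_\phi(x_1)=\{1,3,4\}$ and $\sU_\phi(x_2)=\{3,4,5\}$, so the $y_1$ and $y_2$ bounds are exactly the running hypotheses, the device handles $y_3$, and the remaining pairwise bounds are checked directly for both outputs of the device. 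If instead $\sU_\phi(y_2)=\{3,4,5\}$, color the cycle in order with $3,4,1,5,2$; now $\sU_\phi(x_3)=\{1,2,5\}$, which is excluded for $y_3$ by construction, and all other bounds hold. Either way one obtains a good coloring, so $\sU_\phi(y_1)=\{1,3,4\}$.

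With $\sU_\phi(y_1)=\{1,3,4\}$ in hand, I would assume $\sU_\phi(y_2)\neq\{3,4,5\}$ and sub-case on $\sU_\phi(y_3)$: coloring the cycle in order with $2,4,3,1,5$ yields $\sU_\phi(x_3)=\{1,2,3\}$ and succeeds unless $\sU_\phi(y_3)=\{1,2,3\}$; and in that one remaining case, coloring the cycle in order with $2,3,4,1,5$ (now $\sU_\phi(x_3)=\{1,2,4\}\neq\{1,2,3\}$) works. Both contradict minimality, so $\sU_\phi(y_2)=\{3,4,5\}$, completing the claim.

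The delicate point is that the edge $x_4x_0$ is incident to both $x_0$ and $x_4$, and $\{4,5\}\subseteq\sU_\phi(x_7)\cap\sU_\phi(x_5)$, so giving $x_4x_0$ a color in $\{4,5\}$ charges both degree budgets at once; together with the bans on $5$ at $x_1x_2$ and $x_2x_3$, this squeezes the high colors toward one edge and leaves almost no freedom. Consequently the bulk of the work is the simultaneous verification, for each proposed quintuple, of all the pairwise intersection bounds (the five cycle adjacencies, the two adjacencies to $x_7$ and $x_5$, and the three to $y_1,y_2,y_3$); it is exactly the configuration $\sU_\phi(y_1)=\{1,3,4\},\ \sU_\phi(y_2)=\{3,4,5\}$ that defeats every such quintuple, which is why the claim isolates it before the subcase proceeds, as in the parallel subcases, to a reconsideration of $\psi$.
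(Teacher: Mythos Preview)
Your proof is correct and follows essentially the same approach as the paper: assume the negation of each equality in turn and exhibit an explicit good coloring of the 5-cycle, contradicting minimality. The paper's version is slightly more economical: for $\sU_\phi(y_1)\neq\{1,3,4\}$ it colors $x_2x_3,x_4x_0$ with $1,2$, then uses the ``with respect to'' device twice (once for $x_3x_4$ from $\{4,5\}$ via $\sU_\phi(y_3)$, once for $x_1x_2$ and $x_0x_1$ from $\{3,4\}$ via $\sU_\phi(y_2)$), avoiding your split on $\sU_\phi(y_2)$; and for $\sU_\phi(y_2)\neq\{3,4,5\}$ it colors $x_0x_1,x_1x_2,x_2x_3,x_4x_0$ with $5,3,4,2$ and $x_3x_4$ from $\{1,5\}$ via $\sU_\phi(y_3)$, avoiding your split on $\sU_\phi(y_3)$. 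Both arguments are valid; yours trades one extra case per half for slightly simpler individual checks.
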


\begin{proof}
First suppose $\sU_\phi(y_1) \neq \{1,3,4\}$.  Color $x_2x_3$ and $x_4x_0$ with 1 and 2, respectively.  Now color $x_3x_4$ from $\{4,5\}$ with respect to 1 and $\sU_\phi(y_3)$.  We then color $x_1x_2$ (and $x_0x_1$) from $\{3,4\}$ with respect to 1 and $\sU_\phi(y_2)$.  This yields a good coloring of $G$.

So $\sU_\phi(y_1) = \{1,3,4\}$.  Now suppose $\sU_\phi(y_2) \neq \{3,4,5\}$.  Color $x_0x_1, x_1x_2, x_2x_3, x_4x_0$ with 5, 3, 4, 2, respectively, and $x_3x_4$ from $\{1,5\}$ with respect to 4 and $\sU_\phi(y_3)$.  This is a good coloring of $G$ and proves the claim.
\end{proof}

To complete this subcase, we will reconsider the good partial coloring of $G$ $\psi$.  In a manner similar to that in Subcase \ref{(1,3)(4,5)}, we deduce that $\sU_\phi(y_5) = \{1,4,5\}, \sU_\phi(y_7) = \{3,4,5\}$, and $\phi(x_6y_6) = 2$.  We now recolor the edges of the cycle $x_0x_1\dots x_7$ and the edge $x_4x_0$.

 If $(\psi(x_5y_5), \psi(x_7y_7)) = (5,4)$, suppose $\sU_\psi(y_6) \neq \{1,2,4\}$.  Then color $x_0x_1, x_1x_2$, $x_3x_4$, $x_4x_5$, $x_5x_6, x_6x_7, x_7x_0, x_4x_0$ with 3, 2, 4, 3, 4, 1, 5, 1, respectively, and color $x_2x_3$ from $\{1,3\}$ with respect to 4 and $\sU_\phi(y_3)$.  This yields a good coloring of $G$.

So $\sU_\phi(y_6) = \{1,2,4\}$.  We then color $x_0x_1, x_1x_2, x_3x_4, x_4x_5, x_5x_6, x_6x_7, x_7x_0, x_4x_0$ with 5, 2, 1, 4, 3, 5, 1, 3, respectively, and color $x_2x_3$ from $\{3,4\}$ with respect to 1 and $\sU_\phi(y_3)$.  This yields a good coloring of $G$.

If $(\psi(x_5y_5), \psi(x_7y_7)) = (4,5)$, suppose $\sU_\phi(y_6) \neq \{1,2,5\}$.  Then color $x_0x_1, x_1x_2, x_3x_4$, $x_4x_5$, $x_5x_6, x_6x_7, x_7x_0, x_4x_0$ with 3, 2, 4, 3, 5, 1, 5, 4, 1, respectively, and color $x_2x_3$ from $\{1,3\}$ with respect to 4 and $\sU_\phi(y_3)$.  This yields a good coloring of $G$.

So $\sU_\phi(y_6) = \{1,2,5\}$.  We then color $x_0x_1, x_1x_2, x_3x_4, x_4x_5, x_5x_6, x_6x_7, x_7x_0, x_4x_0$ with 4, 2, 1, 5, 3, 4, 1, 3, respectively, and color $x_2x_3$ from $\{3,4\}$ with respect to 1 and $\sU_\phi(y_3)$.  This yields a good coloring of $G$.

Up to relabeling colors and symmetry, this completes the proof of Subcase \ref{(4,5).5}, and so completes the proof of Case \ref{5.5.3}.  Thus, as we have exhausted all cases, the lemma holds.
\end{proof}

\begin{proof}[Proof of Lemma \ref{No5-6face}]
We assume $\psi$ and $\phi$ to be as described in the proof of Lemma \ref{No5-6face} in Section \ref{struct3}.  Thus, in order to extend $\phi$, it remains to color the edges of the cycle $x_0x_1x_2x_3x_4x_5$.  As a result, when we `color the cycle in order' we color the edges $x_0x_1, x_1x_2, x_2x_3, x_3x_4, x_4x_5, x_5x_0$ in this order.

We will break the following argument into two cases depending on $\phi(x_3y_3)$.  Within each case we consider $\phi(x_8)\setminus\{\alpha\}$, and within these subcases, we consider the values of $\alpha$ and $\beta$.  As in the proof of Lemma \ref{No5-5face}, we use claims to aid in the readability.

\setcounter{case}{0}
\begin{case}\label{(5,6).1}
$\phi(x_3y_3) = \phi(x_4y_4) = 1$.
\end{case}

\begin{subcase}
$\sU_\phi(x_8)\setminus\{\alpha\} = \{1,2\}$.
\end{subcase}

In this case, $\alpha, \beta \in \{3,4,5\}$.  So without loss of generality, assume $\alpha = 3$ and $\beta \neq 5$. Let $\obeta$ be such that $\{\beta, \obeta\} = \{3,4\}$.  

\begin{claim}
$4 \in \sU_\phi(y_1)$, and by symmetry, $\obeta \in \sU_\phi(y_4)$.
\end{claim}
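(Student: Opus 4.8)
The plan is to establish the claim by contradiction, following the template used repeatedly in this section: I would assume $4 \notin \sU_\phi(y_1)$ and extend $\phi$ to a good coloring of $G$ by coloring the six remaining edges $x_0x_1, x_1x_2, x_2x_3, x_3x_4, x_4x_5, x_5x_0$ of the $6$-cycle, contradicting the minimality of $G$. First I would record the colors already seen at each relevant vertex: $x_0$ sees $3$ (on $x_8x_0$, as $\alpha = 3$), $x_5$ sees $\beta$ (on $x_5x_6$), and each of $x_1, x_2, x_3, x_4$ sees $1$ (on $x_iy_i$, since we are in the case $\phi(x_3y_3) = \phi(x_4y_4) = 1$ and have normalized $\phi(x_1y_1) = \phi(x_2y_2) = 1$). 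I would also recall that $\sU_\phi(x_8) = \{1,2,3\}$ and $\sU_\phi(x_6) = \{1,2,\beta\}$, so that the intersection constraints to monitor are those at $y_1, y_2, y_3, y_4$, at the two corners $x_8$ and $x_6$ of the adjacent $5$-face, and along the $6$-cycle itself (including the chord $x_5x_0$).

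The lever provided by the hypothesis is that, with $4 \notin \sU_\phi(y_1)$, I may set $\phi(x_0x_1) = 4$: this is proper ($4 \neq 3$ at $x_0$ and $4 \neq 1$ at $x_1$), it cannot raise $|\sU_\phi(x_1) \cap \sU_\phi(y_1)|$ above $2$ since $4 \notin \sU_\phi(y_1)$, and it also disposes of the $x_0$-corner: since $4 \notin \sU_\phi(x_8)$, we get $\sU_\phi(x_0) \cap \sU_\phi(x_8) \subseteq \{3, \phi(x_5x_0)\}$, which has size at most $2$ no matter how $x_5x_0$ is later colored. With $x_0x_1$ fixed I would then color $x_1x_2, x_2x_3, x_3x_4, x_4x_5, x_5x_0$ in order, using the ``color $\,\cdots\,$ from $\{\cdot,\cdot\}$ with respect to $\,\cdots$'' procedure to keep the intersections at $y_2, y_3$ (and $y_4$) within bound, branching only on whether a few specific colors lie in $\sU_\phi(y_2)$ or $\sU_\phi(y_3)$, exactly as in the cases of Lemmas \ref{No2on6cycle} and \ref{No5-5face}. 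Since $5 \notin \sU_\phi(x_8) \cup \sU_\phi(x_6)$, I expect to steer the remaining corner edges $x_4x_5$ and $x_5x_0$ toward $\{4,5\}$ to dispatch those constraints cheaply.

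The step I expect to be the main obstacle is the bookkeeping at the second corner $x_5$ together with the chord $x_5x_0$: because both $\sU_\phi(x_8)$ and $\sU_\phi(x_6)$ already contain $\{1,2\}$, I must arrange that at most one of $x_4x_5, x_5x_0$ receives a color in $\{1,2\}$ (to keep $|\sU_\phi(x_5)\cap\sU_\phi(x_6)| \le 2$), while simultaneously keeping $|\sU_\phi(x_0) \cap \sU_\phi(x_5)| \le 2$ across the chord, which --- given that $\phi(x_0x_1) = 4$ is already placed and $x_0$ already sees $3$ --- forbids one further color on $x_4x_5$ (namely $4$ when $\beta = 3$, and $3$ when $\beta = 4$). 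I would organize this by splitting on $\beta \in \{3,4\}$, since $\beta$ is precisely the color on which the $x_5$-corner either mimics the $x_0$-corner ($\beta = 3$) or instead collides with the color $4$ used at $x_0x_1$ ($\beta = 4$); in each branch the residual choices for $x_1x_2, \dots, x_5x_0$ are constrained enough that a short color-by-color check produces a good coloring of $G$, the desired contradiction, whence $4 \in \sU_\phi(y_1)$.

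Finally, I would obtain the statement for $y_4$ by symmetry. The reflection $x_i \leftrightarrow x_{5-i}$ of the $6$-cycle carries $x_8x_0$ to $x_5x_6$ and $y_1$ to $y_4$; composing it with the color transposition that fixes $\{1,2,5\}$ and swaps $3 \leftrightarrow \beta$ (the identity when $\beta = 3$, and the swap $3 \leftrightarrow 4$ when $\beta = 4$) converts the argument above verbatim into a proof that $\obeta \in \sU_\phi(y_4)$, because this transposition sends the color $4$ to $\obeta$ in both cases.
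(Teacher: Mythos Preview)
Your plan is correct and matches the paper's approach: both assume $4\notin\sU_\phi(y_1)$, fix $\phi(x_0x_1)=4$, and greedily extend around the $6$-cycle to reach a contradiction, with the symmetry for $y_4$ handled exactly as you describe. The only organizational difference is that the paper's case split is governed first by whether $4\in\sU_\phi(y_3)$ and then by which of $\{2,3,5\}$ avoids $\sU_\phi(y_2)$, rather than by the value of $\beta$; the resulting colorings coincide with what your outline would produce.
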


\begin{proof}
Suppose $4 \notin \sU_\phi(y_1)$.  Let $\{\gamma_1, \gamma_2, \gamma_3\} = \{2,3,5\}$.  Suppose that $4 \notin \sU_\phi(y_3)$.  We color $x_0x_1$ with 4, and then color $x_1x_2$ from $\{\gamma_1, \gamma_2, \gamma_3\}\setminus\sU_\phi(y_2)$.  We may assume $x_1x_2$ is colored with $\gamma_1$.  We then color $x_3x_4$ with 4, and color $x_4x_5$ (and $x_5x_0$) from $\{2,5\}$ with respect to 4 and $\sU_\phi(y_4)$.  Let $\gamma$ denote the color used on $x_4x_5$, and color $\{\gamma_2,\gamma_3\}\setminus\{\gamma\}$.  This yields a good coloring of $G$.

So $4 \in \sU_\phi(y_3)$.  We color $x_0x_1$ and $x_1x_2$ with 4 and $\gamma_1$, respectively, where $\gamma_1 \notin \sU_\phi(y_2)$ as above.  Suppose $\gamma_1 = 5$.  Color $x_4x_5$ and $x_5x_0$ with 5 and 2, respectively.  We then color $x_3x_4$ (and $x_2x_3$) from $\{2,3\}$ with respect to 5 and $\sU_\phi(y_4)$.  This yields a good coloring of $G$.  A similar argument holds when $\gamma_1 = 2$ by switching the roles of 2 and 5.

So $\gamma_1 = 3$ and $\sU_\phi(y_2) = \{1,2,5\}$, otherwise we could recolor $x_1x_2$ with either 2 or 5 as above.  Now $\sU_\phi(y_4) = \{1,2,5\}$, otherwise color the cycle in order with 4, 2, 3, 5, 2, 5.  We then color the cycle in order with 5, 4, 2, 5, $\obeta$, 1.  These are good colorings of $G$ and prove the claim.
\end{proof}

\begin{claim}
$\sU_\phi(y_1) \cup \sU_\phi(y_2) = \{1,2,3,4,5\}$, and by symmetry, $\sU_\phi(y_3) \cup \sU_\phi(y_4) = \{1,2,3,4,5\}$.
\end{claim}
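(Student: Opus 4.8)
The plan is to prove the statement for $y_1,y_2$ by contradiction and then deduce the $y_3,y_4$ statement from the reflection symmetry of the configuration. First I would record that $y_1$ and $y_2$ are $3$-vertices. Indeed, if either were a $2$-vertex the corresponding intersection constraint $|\sU(x_i)\cap\sU(y_i)|\le 2$ would be vacuous, and the resulting slack should let one color the six edges of $x_0x_1x_2x_3x_4x_5$ directly using the ``color $\ldots$ with respect to $\ldots$ and $\sU_\phi(\cdot)$'' device, yielding a good coloring of $G$ and contradicting minimality. Hence $|\sU_\phi(y_1)|=|\sU_\phi(y_2)|=3$. Since $\phi(x_1y_1)=\phi(x_2y_2)=1$ we have $1\in\sU_\phi(y_1)\cap\sU_\phi(y_2)$, while the previous claim gives $4\in\sU_\phi(y_1)$; and the construction of $G'$ (the presence of the edge $y_1y_2$) forces $|\sU_\phi(y_1)\cap\sU_\phi(y_2)|\le 2$, so $|\sU_\phi(y_1)\cup\sU_\phi(y_2)|\ge 4$.

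Next I would suppose the union is not all of $\{1,2,3,4,5\}$, so exactly one color $c$ is absent from both sets. Because $1$ lies in both and $4\in\sU_\phi(y_1)$, necessarily $c\in\{2,3,5\}$. The key leverage is that $c\notin\sU_\phi(y_1)\cup\sU_\phi(y_2)$ lets me place $c$ freely on an edge incident to $x_1$ or $x_2$ without violating the intersection constraint at $x_1y_1$ or $x_2y_2$: whichever of $\sU(x_1),\sU(x_2)$ receives $c$ is then automatically distinct from $\sU_\phi(y_1)$, respectively $\sU_\phi(y_2)$, so those two vertices become harmless.

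For each $c\in\{2,3,5\}$ I would then exhibit an explicit good coloring of the cycle $x_0x_1x_2x_3x_4x_5$, contradicting minimality. Such a coloring must respect the forced properness restrictions (recalling $\alpha=3$): $x_0x_1\notin\{1,3\}$, each of $x_1x_2,x_2x_3,x_3x_4\ne 1$, $x_4x_5\notin\{1,\beta\}$, and $x_5x_0\notin\{3,\beta\}$, together with the remaining intersection constraints at $x_3,x_4,x_0,x_5$. Concretely, I would seat $c$ near $x_1,x_2$ to neutralize those vertices, then color $x_2x_3,x_3x_4,x_4x_5,x_5x_0$ and finally $x_0x_1$, dodging $\sU_\phi(y_3)$ and $\sU_\phi(y_4)$ via the ``with respect to'' device and using $\obeta\in\sU_\phi(y_4)$ from the previous claim; this will generally require splitting on $\beta\in\{3,4\}$ and, in the tight branches, on the exact values of $\sU_\phi(y_3)$ and $\sU_\phi(y_4)$.

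Finally, the $y_3,y_4$ statement follows by the reflection of the whole configuration exchanging $x_1\leftrightarrow x_4$, $x_2\leftrightarrow x_3$, $x_6\leftrightarrow x_8$, hence $y_1\leftrightarrow y_4$, $y_2\leftrightarrow y_3$, and $\alpha\leftrightarrow\beta$; since in Case~\ref{(5,6).1} both pairs $x_1y_1,x_2y_2$ and $x_3y_3,x_4y_4$ are colored $1$ and $\sU_\phi(x_8)\setminus\{\alpha\}=\sU_\phi(x_6)\setminus\{\beta\}=\{1,2\}$, this reflection preserves every hypothesis of the present subcase and transports the conclusion. I expect the main obstacle to be the third step: producing a valid coloring for every combination of $c$, $\beta$, and $\sU_\phi(y_3),\sU_\phi(y_4)$, because all four edges $x_1y_1,\dots,x_4y_4$ carry the same color $1$, which makes the $2$-intersection condition along the middle of the cycle unusually tight and is exactly what forces the detailed bookkeeping of the case analysis.
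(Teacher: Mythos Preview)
Your plan is correct and matches the paper's approach: case on the missing color $c\in\{2,3,5\}$ and, in each branch, produce explicit good colorings of the $6$-cycle (first narrowing $\sU_\phi(y_3),\sU_\phi(y_4)$ where needed), then invoke the reflection for the $y_3,y_4$ half. Two minor points: the preliminary reduction to $3$-vertices is unnecessary---the paper's explicit colorings go through verbatim when some $y_i$ is a $2$-vertex, since the intersection constraint at that vertex only becomes easier---and the paper avoids any split on $\beta\in\{3,4\}$ by always assigning $x_4x_5$ and $x_5x_0$ colors from $\{2,5\}$.
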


\begin{proof}
Suppose that $2 \notin \sU_\phi(y_1) \cup \sU_\phi(y_2)$.  Then $\sU_\phi(y_4) = \{1,2,4\}$, otherwise color $x_0x_1, x_1x_2$, $x_3x_4, x_4x_5, x_5x_0$ with 4, 2, 4, 2, 5, respectively, and color $x_2x_3$ from $\{3,5\}$ with respect to 4 and $\sU_\phi(y_3)$.  This yields a good coloring of $G$.   Now $\sU_\phi(y_3) = \{1,3,5\}$,  otherwise color the cycle in order with 4, 2, 3, 5, 2, 5.  We then color the cycle in order with 4, 2, 3, 4, 5, 2.  These are both good colorings of $G$.

A similar argument holds if $5 \notin \sU_\phi(y_1) \cup \sU_\phi(y_2)$ by switching the roles of 2 and 5.  So we may assume that only $3 \notin \sU_\phi(y_1) \cup \sU_\phi(y_2)$.  Suppose $\sU_\phi(y_3) \neq \{1,3,4\}$.  Color $x_0x_1, x_2x_3, x_3x_4$ with 4, 3, 4, respectively.  We then color $x_1x_2$ from $\{1,2\}\setminus\sU_\phi(y_1)$, and color $x_4x_5$ (and $x_5x_0$) from $\{2,5\}$ with respect to 4 and $\sU_\phi(y_4)$.  This yields a good coloring of $G$.

So $\sU_\phi(y_3) = \{1,3,4\}$.  Color $x_0x_1$ and $x_2x_3$ with 4 and 3, respectively. We then color $x_1x_2, x_4x_5$ (and $x_3x_4, x_5x_0$) from $\{2,5\}$ with respect to 4 and $\sU_\phi(y_1)$.  This yields a good coloring and proves the claim.
\end{proof}

\begin{claim}
$\sU_\phi(y_1) = \{1,3,4\}$ and $\sU_\phi(y_2) = \{1,2,5\}$, and by symmetry,  and $\sU_\phi(y_4) = \{1,\beta, \obeta\} = \{1,3,4\}$ and $\sU_\phi(y_3) = \{1,2,5\}$.
\end{claim}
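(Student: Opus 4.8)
The plan is to first combine the two preceding claims with the subcubic degree bound to reduce the statement to determining a single colour, and only then to rule out the remaining possibilities by colouring the $6$-cycle. Since $\phi(x_1y_1)=1$ we have $1\in\sU_\phi(y_1)$, so the first claim above gives $\{1,4\}\subseteq\sU_\phi(y_1)$; likewise $1\in\sU_\phi(y_2)$. As $y_1,y_2$ are subcubic, $|\sU_\phi(y_1)|,|\sU_\phi(y_2)|\le 3$, so beyond the shared colour $1$ the set $\sU_\phi(y_1)$ can contribute at most one further colour and $\sU_\phi(y_2)$ at most two. By the second claim above their union is $\{1,2,3,4,5\}$, so these at most three slots must cover $\{2,3,5\}$; in particular $4\notin\sU_\phi(y_2)$, for otherwise only two slots would remain for the three colours $2,3,5$. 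Hence $\sU_\phi(y_1)=\{1,4,c\}$ and $\sU_\phi(y_2)=\{1\}\cup(\{2,3,5\}\setminus\{c\})$ for some $c\in\{2,3,5\}$, and it suffices to show $c=3$, equivalently $3\notin\sU_\phi(y_2)$. The statement for $y_3,y_4$ then follows from the reflective symmetry $x_i\mapsto x_{5-i}$ of the configuration, which interchanges $x_0$ with $x_5$ (hence $\alpha$ with $\beta$, i.e. relabels $3\leftrightarrow\beta$) and correspondingly interchanges the special colours $4$ and $\obeta$; this symmetry also fixes the present subcase, since it sends $\sU_\phi(x_8)\setminus\{\alpha\}$ to $\sU_\phi(x_6)\setminus\{\beta\}$, which are equal.

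To force $c=3$ I would rule out $c=2$ and $c=5$, in each case exhibiting a good colouring of $G$ by colouring the six edges $x_0x_1,\dots,x_5x_0$, contradicting the minimality of $G$. All the data at the ``ends'' of the cycle is now available: $\alpha=3$ gives $\sU_\phi(x_8)=\{1,2,3\}$ and $\sU_\phi(x_6)=\{1,2,\beta\}$, while the constraints at $x_1$ and $x_2$ come from the now-determined sets $\sU_\phi(y_1)$ and $\sU_\phi(y_2)$. Each cycle edge is coloured using the device ``colour $\dots$ with respect to $\gamma$ and $\sU_\phi(y)$'' introduced after Lemma~\ref{No4cycle}, the colours being chosen so that the proper and $2$-intersection conditions hold at every vertex.

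The hard part is that these colouring attempts do not close uniformly: the constraints at $x_3$ and $x_4$ depend on $\sU_\phi(y_3)$ and $\sU_\phi(y_4)$, which are not yet pinned down, so the argument must branch on the values of these two sets. In the generic branches a direct colouring of the $6$-cycle succeeds, but I expect a few stubborn configurations (the analogues of the subcases in the proof of Lemma~\ref{No5-5face} where one must reconsider $\psi$) in which no colouring of the $6$-cycle alone works; there one instead recolours the entire $9$-cycle together with the chord $x_5x_0$ under the partial colouring $\psi$. I also expect the $c=2$ and $c=5$ branches to need genuinely different colourings, since $2$ and $5$ are not symmetric here: the colour $2$ lies in both $\sU_\phi(x_8)$ and $\sU_\phi(x_6)$, whereas $5$ lies in neither. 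Carrying out this bookkeeping, every branch yields either a good colouring of $G$ or the conclusion $c=3$, which together with the reduction above establishes the claim.
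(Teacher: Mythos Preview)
Your reduction is exactly right and matches the paper: from the two preceding claims one gets $\sU_\phi(y_1)=\{1,4,c\}$ and $\sU_\phi(y_2)=\{1\}\cup(\{2,3,5\}\setminus\{c\})$ for some $c\in\{2,3,5\}$, and it remains to exclude $c=2$ and $c=5$. The reflective symmetry you describe then transfers the conclusion to $y_3,y_4$.

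Where you overshoot is in estimating the remaining work. Neither of your two anticipated complications actually materialises. First, no return to $\psi$ is needed: for each value of $c\in\{2,5\}$ and each of the three possible pairs $(\sU_\phi(y_3),\sU_\phi(y_4))$ (which, by the same counting you used for $y_1,y_2$, are determined by a single parameter $d\in\{2,5,\beta\}$ with $\sU_\phi(y_4)=\{1,\obeta,d\}$), a direct colouring of the $6$-cycle succeeds. Second, your expectation that $c=2$ and $c=5$ need separate arguments is wrong: the paper treats $c=5$ explicitly and dispatches $c=2$ with ``a similar argument holds \ldots\ by switching the roles of $2$ and $5$''. Your observation that $2\in\sU_\phi(x_8)\cap\sU_\phi(x_6)$ while $5$ lies in neither is correct, but it turns out not to obstruct the swap. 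In each colouring used, the multiset of colours placed on $x_0x_1$ and $x_5x_0$ is either $\{2,5\}$ or contains neither $2$ nor $5$; since the $2$-intersection constraints at $x_0$ and $x_5$ depend only on $\sU_\phi(x_0)$ and $\sU_\phi(x_5)$ as sets, swapping $2$ and $5$ in the cycle colours preserves those intersections. The swap simultaneously permutes the three branches $d\in\{\beta,5,2\}$, so coverage remains complete.

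So your plan is sound but pessimistic; the actual execution is a handful of explicit $6$-cycle colourings for $c=5$, then one sentence invoking the $2\leftrightarrow 5$ swap.
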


\begin{proof}
Suppose $\sU_\phi(y_1) = \{1,4, 5\}$.  By the previous claim, $\sU_\phi(y_2) = \{1,2,3\}$.  Suppose $\beta \notin \sU_\phi(y_3)$.  Color $x_0x_1, x_1x_2, x_2x_3, x_4x_5, x_5x_0$ with 2, $\obeta$, $\beta$, $\obeta$, 5, respectively.  We then color $x_3x_4$ with a color from $\{2,5\}\setminus\sU_\phi(y_4)$.  This yields a good coloring of $G$.  

So $\beta \in \sU_\phi(y_3)$.  If $2 \notin \sU_\phi(y_4)$, then by the previous claim $\sU_\phi(y_3) = \{1,2, \beta\}$ and $\sU_\phi(y_4) = \{1, \obeta, 5\}$.  Then color the cycle in order with 2, 4, 5, 2, $\obeta$, 5.  This is a good coloring of $G$.  Thus, $\sU_\phi(y_4) = \{1,2,\obeta\}$, and so $\sU_\phi(y_3) = \{1,\beta, 5\}$.  If $\beta = 3$, we color the cycle in order with 2, 5, 3, 2, 5, 4, to obtain a good coloring of $G$.  If $\beta = 4$, we color the cycle in order with 5, 2, 4, 3, 5, 2, to obtain a good coloring of $G$.  

As above, a similar argument holds when $\sU_\phi(y_1) = \{1,2,4\}$ by switching the roles of 2 and 5.  Thus, as $1,4 \in \sU_\phi(y_1)$, the claim holds.
\end{proof}

Now color $x_0x_1, x_1x_2, x_2x_3, x_3x_4, x_5x_0$ with 5, 4, 3, 2, 2, respectively.  If $\beta = 3$, color $x_4x_5$ with 4.  If $\beta = 4$, color $x_4x_5$ with 5.  In either case, we obtain a good coloring of $G$, which proves the subcase.

\begin{subcase}
$\sU_\phi(x_8)\setminus\{\alpha\} = \{4,5\}$.
\end{subcase}

\begin{subsubcase}\label{1.1}
$(\alpha,\beta) = (1,1)$.
\end{subsubcase}

\begin{claim}
$\sU_\phi(y_1) = \{1,4,5\}$, and by symmetry $\sU_\phi(y_4) = \{1,4,5\}$.
\end{claim}

\begin{proof}
Suppose $\sU_\phi(y_1) \neq \{1,4,5\}$.  Also suppose $\sU_\phi(y_3) \neq \{1,2,3\}$.  Then $2 \in \sU_\phi(y_2)$, otherwise color  $x_2x_3, x_3x_4, x_5x_0$ with 2, 3, 2, respectively, and then color $x_4x_5, x_1x_2$ (and $x_0x_1$) from $\{4,5\}$ with respect to 3 and $\sU_\phi(y_4)$.  This yields a good coloring of $G$.  A similar argument holds if $3 \notin \sU_\phi(y_2)$ by switching the roles of 2 and 3.  So $\sU_\phi(y_2) = \{1,2,3\}$.  Now color $x_0x_1, x_1x_2, x_4x_5$ with 4, 5, 5, respectively, and then color $x_3x_4$ (and $x_2x_3, x_5x_0$) from $\{2,3\}$ with respect to 5 and $\sU_\phi(y_4)$.  This yields a good coloring of $G$.

So $\sU_\phi(y_3) = \{1,2,3\}$.  Suppose $2 \notin \sU_\phi(y_2)$.  Color $x_2x_3, x_4x_5, x_5x_0$ with 2, 3, 2, respectively.  We then color $x_3x_4, x_0x_1$ (and $x_1x_2$) from $\{4,5\}$ with respect to 3 and $\sU_\phi(y_4)$.  This yields a good coloring of $G$.

A similar argument holds if $3 \notin \sU_\phi(y_2)$ by switching the roles of 2 and 3.  So $\sU_\phi(y_2) = \{1,2,3\}$.  Now color $x_0x_1, x_1x_2, x_3x_4$ with 4, 5, 4, respectively.  We then color $x_4x_5$ (and $x_2x_3, x_5x_0$) from $\{2,3\}$ with respect to 4 and $\sU_\phi(y_4)$.  This yields a good coloring of $G$ and proves the claim.
\end{proof}

By the existence of $y_1y_2$ in $G'$, $\sU_\phi(y_2) \neq \{1,4,5\}$.  Color $x_1x_2, x_2x_3, x_4x_5$ with 4, 5, 4, respectively.  Then color $x_3x_4, x_0x_1$ (and $x_5x_0$) from $\{2,3\}$ with respect to 5 and $\sU_\phi(y_3)$.

\begin{subsubcase}
$(\alpha, \beta) = (1,3)$.
\end{subsubcase}

Color $x_0x_1$ and $x_5x_0$ with 3 and 2, respectively, and color $x_0x_1$ from $\{4,5\}$ with respect to 3 and $\sU_\phi(y_1)$.  Without loss of generality, assume $x_0x_1$ is colored with 4 so that $\sU_\phi(y_1) \neq \{1,3,4\}$.  Now color $x_2x_3$ from $\{2,5\}$ with respect to 4 and $\sU_\phi(y_2)$.  Let $\gamma$ denote the color used on $x_2x_3$.   We then color $x_3x_4$ from $\{2,3,5\}\setminus\{\gamma\}$ with respect to $\gamma$ and $\sU_\phi(y_3)$. Let $\delta$ denote the color used on $x_3x_4$.  If $\sU_\phi(y_4) \neq \{1,4,\delta\}$, then coloring $x_4x_5$ with 4 yields a good coloirng of $G$.  Note that $\gamma \neq \delta$.

So $\sU_\phi(y_4) = \{1,4,\delta\}$.  Assume $x_0x_1, x_1x_2, x_2x_3, x_5x_0$ are colored as above.  Suppose $\gamma = 5$.  If $5 \notin \sU_\phi(y_3)$, color $x_3x_4$ from $\{2,3\}\setminus\sU_\phi(y_4)$ and color $x_4x_5$ with 4.  This yields a good coloring of $G$.  

So $5 \in \sU_\phi(y_3)$.  Then $\sU_\phi(y_2) = \{1,2,4\}$, otherwise color the cycle in order with 3, 4, 2, 3, 5, 2.  This is a good coloring of $G$.  Since $\gamma = 5$, and $\gamma \neq \delta$, $\sU_\phi(y_4) \neq \{1,4,5\}$.  We then color the cycle in order with 4, 3, 2, 4, 5, 2.  This is a good coloring of $G$.

Thus $\gamma = 2$ so that $\sU_\phi(y_2) \neq \{1,2,4\}$ and $\sU_\phi(y_4) = \{1,\delta, 4\}\neq \{1,2,4\}$.  Now $\sU_\phi(y_3) = \{1,2,3\}$, otherwise color the cycle in order with 3, 4, 2, 3, 5, 2.  This is a good coloring of $G$.  We then color $x_2x_3, x_3x_4, x_4x_5, x_5x_0$ with 5, 2, 4, 2, respectively, and color $x_1x_2$ (and $x_0x_1$) from $\{3,4\}$ with respect to 5 and $\sU_\phi(y_2)$.  This yields a good coloring of $G$.

\begin{subsubcase}
$\alpha = \beta = 2$.
\end{subsubcase}

\begin{claim}
$\sU_\phi(y_1) = \{1,4,5\}$, and by symmetry $\sU_\phi(y_4) = \{1,4,5\}$.
\end{claim}

\begin{proof}
Suppose $\sU_\phi(y_1) \neq \{1,4,5\}$.  Also suppose $\sU_\phi(y_3) \neq \{1,2,3\}$.  Now $2 \in \sU_\phi(y_2)$, otherwise color $x_2x_3, x_3x_4, x_5x_0$ with 2,3,1, respectively, and color $x_1x_2, x_4x_5$ (and $x_0x_1$) from $\{4,5\}$ with respect to 2 and $\sU_\phi(y_2)$.  This yields a good coloring of $G$.  Also $3 \in \sU_\phi(y_2)$, otherwise color $x_2x_3, x_3x_4, x_5x_0$ with 3, 2, 3, respectively, and color $x_4x_5, x_1x_2$ (and $x_0x_1$) from $\{4,5\}$ with respect to 2 and $\sU_\phi(y_4)$.  This yields a good coloring of $G$.  

So $\sU_\phi(y_2) = \{1,2,3\}$.  Suppose $\sU_\phi(y_3) \neq \{1,2,3\}$.  We then color $x_2x_3$ and $x_3x_4$ with 2 and 3, respectively, and color $x_4x_5, x_1x_2$ (and $x_0x_1$) from $\{4,5\}$ with respect to 3 and $\sU_\phi(y_4)$.  This yields a good coloring of $G$.

So $\sU_\phi(y_3) = \{1,2,3\}$.  Suppose $\sU_\phi(y_4) \neq \{1,4,5\}$.  Now color $x_2x_3$ with 2. Then color $x_1x_2, x_4x_5$ (and $x_0x_1, x_3x_4$) from $\{4,5\}$ with respect to 2 and $\sU_\phi(y_2)$.  This yields a good coloring of $G$.

So $\sU_\phi(y_4) = \{1,4,5\}$.  Color $x_2x_3$ and $x_4x_5$ with 2 and 3, respectively.  We then color $x_1x_2$ (and $x_0x_1, x_3x_4$) from $\{4,5\}$ with respect to 2 and $\sU_\phi(y_2)$.  This yields a good coloring of $G$ and proves the claim.
\end{proof}

Suppose $\sU_\phi(y_3) \neq \{1,2,3\}$.  Color $x_0x_1, x_2x_3, x_3x_4, x_4x_5$ with 3, 2, 3, 4, respectively.  Then color $x_1x_2$ from $\{4,5\}$ with respect to 2 and $\sU_\phi(y_2)$.  This yields a good coloring of $G$.  

So $\sU_\phi(y_3) = \{1,2,3\}$, and by symmetry, $\sU_\phi(y_2) = \{1,2,3\}$.  We then color the cycle in order with 4, 3, 5, 4, 3, 1.  This is a good coloring of $G$.

\begin{subsubcase}
$(\alpha,\beta) = (2,3)$.
\end{subsubcase}

In the following, we will assume $x_5x_0$ is colored with 1.

\begin{claim}
$\sU_\phi(y_1) = \{1,4,5\}$, and by symmetry $\sU_\phi(y_4) = \{1,4,5\}$.
\end{claim}

\begin{proof}
Suppose $\sU_\phi(y_1) \neq \{1,4,5\}$.  Also suppose that $\sU_\phi(y_4) \neq \{1,4,5\}$.  Now $2 \in \sU_\phi(y_2)$, otherwise color $x_2x_3$ with 2, $x_1x_2$ (and $x_0x_1$) from $\{4,5\}$ with respect to 2 and $\sU_\phi(y_2)$, and color $x_3x_4$ (and $x_4x_5$) from $\{4,5\}$ with respect to 2 and $\sU_\phi(y_3)$.  This yields a good coloring of $G$.

So $2 \in \sU_\phi(y_2)$, and by a similar argument $3 \in \sU_\phi(y_2)$.  Thus, $\sU_\phi(y_2) = \{1,2,3\}$.  Since we are currently assuming that neither $\sU_\phi(y_1)$ nor $\sU_\phi(y_4)$ is $\{1,4,5\}$, by symmetry we deduce that $\sU_\phi(y_3) = \{1,2,3\}$.  We then color the cycle in order with 4, 5, 2, 4, 5, 1.  This is a good coloring of $G$.

So $\sU_\phi(y_4) = \{1,4,5\}$.  Suppose $\sU_\phi (y_3) \neq \{1,2,3\}$.  We color $x_2x_3, x_3x_4, x_4x_5$ with 3, 2, 4, respectively.  We then color $x_1x_2$ (and $x_0x_1$) from $\{4,5\}$ with respect to 3 and $\sU_\phi(y_2)$.  This yields a good coloring of $G$.

So $\sU_\phi(y_3) = \{1,2,3\}$.  Now color $x_2x_3$ and $x_4x_5$ with 3 and 2, respectively.  Then color $x_1x_2$ (and $x_0x_1, x_3x_4$) from $\{4,5\}$ with respect to 3 and $\sU_\phi(y_2)$.  This yields a good coloring of $G$ and proves the claim.
\end{proof}

Suppose $3 \notin \sU_\phi(y_2)$.  Color $x_1x_2$ and $x_3x_4$ with 3 and 2, respectively.  Then color $x_2x_3$ (and $x_0x_1, x_4x_5$) from $\{4,5\}$ with respect to 2 and $\sU_\phi(y_3)$.  This yields a good coloring of $G$.

So $3 \in \sU_\phi(y_2)$, and by symmetry, $2 \in \sU_\phi(y_3)$.  Color $x_0x_1$ and $x_3x_4$ with 3 and 2, respectively.  We then color $x_2x_3$ (and $x_1x_2, x_4x_5$) from $\{4,5\}$ with respect to 2 and $\sU_\phi(y_3)$.  This yields a good coloring of $G$.

Up to relabeling colors and symmetry, this completes all subcases and proves Case \ref{(5,6).1}

\begin{case}\label{(5,6).2}
$\phi(x_3y_3) = \phi(x_4y_4) = 2$.
\end{case}

\begin{subcase}
$\sU_\phi(x_8)\setminus\{\alpha\} = \{1,2\}$.
\end{subcase}

Without loss of generality, assume $\alpha = 3$ and $\beta, \obeta \in \{3,4\}$ such that $\{\beta, \obeta\} = \{3,4\}$.  

In the following, suppose we have colored $x_0x_1, x_4x_5, x_5x_0$ with 2,1, 5, respectively.  Let $\sigma$ denote this good partial coloring of $G$.  Let $\{\gamma_1, \gamma_2, \gamma_3\} = \{3,4,5\}$.  

\begin{claim}
$2 \in \sU_\sigma(y_1)$, and by symmetry $1 \in \sU_\phi(y_4)$.
\end{claim}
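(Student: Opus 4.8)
The plan is to argue by contradiction: assume $2 \notin \sU_\sigma(y_1)$ and show that $\sigma$ then extends to a good coloring of all of $G$, contradicting the choice of $G$; this forces $2 \in \sU_\sigma(y_1)$. Under $\sigma$ the only uncolored edges are the three path edges $x_1x_2, x_2x_3, x_3x_4$, so it suffices to color these. First I would record the local data: $\sU_\sigma(x_1) = \{1,2\}$ (from $\phi(x_1y_1)=1$ and $\sigma(x_0x_1)=2$) and $\sU_\sigma(x_4) = \{1,2\}$ (from $\phi(x_4y_4)=2$ and $\sigma(x_4x_5)=1$). Hence properness forces $\phi(x_1x_2),\phi(x_3x_4) \in \{3,4,5\}$, and since $\phi(x_2y_2)=1$ and $\phi(x_3y_3)=2$, properness at $x_2$ and $x_3$ also pins $\phi(x_2x_3) \in \{3,4,5\}$, distinct from each of its two path-neighbours. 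Thus the task reduces to a proper coloring of the three-edge path $x_1x_2x_3x_4$ from the palette $\{3,4,5\}$.

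The key simplification, and the whole point of the hypothesis, is that the intersection constraint at $x_1$ becomes free. Indeed $\sU_\sigma(y_1) = \sU_\phi(y_1)$ (the only colored edge of $y_1$ is $x_1y_1$, which $\sigma$ leaves untouched), and this set contains $1$ but, by assumption, not $2$; so coloring $x_1x_2$ with any $c$ gives $\sU(x_1)=\{1,2,c\}$, which meets $\sU(y_1)$ in at most $\{1,c\}$. The remaining constraints I must respect are then only three: at most one of $\phi(x_1x_2),\phi(x_2x_3)$ lies in $\sU_\phi(y_2)$ (which contains $1$); at most one of $\phi(x_2x_3),\phi(x_3x_4)$ lies in $\sU_\phi(y_3)$ (which contains $2$); and, since $\sU_\phi(x_4)=\{1,2,\phi(x_3x_4)\}$ with $2\in\sU_\phi(y_4)$, we need $\phi(x_3x_4)\notin\sU_\phi(y_4)$ precisely when $1\in\sU_\phi(y_4)$. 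Each of $\sU_\phi(y_2),\sU_\phi(y_3)$ meets $\{3,4,5\}$ in at most two colors, and $\sU_\phi(y_4)$ meets it in exactly one color $d$ in the only binding case $\sU_\phi(y_4)=\{1,2,d\}$.

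I would then complete the coloring by splitting on whether $1\in\sU_\phi(y_4)$. If $1\notin\sU_\phi(y_4)$ the constraint at $x_4$ vanishes and there is ample room; if $1\in\sU_\phi(y_4)$ then $\phi(x_3x_4)$ must avoid the single color $d$, leaving two admissible values for $x_3x_4$. In either situation one picks $\phi(x_3x_4)$, then $\phi(x_2x_3)$ (distinct from it, respecting the $x_3$-constraint), then $\phi(x_1x_2)\in\{3,4,5\}$ (distinct from $\phi(x_2x_3)$, respecting the $x_2$-constraint). Since there are $3\cdot2\cdot2 = 12$ proper path-colorings from $\{3,4,5\}$ while each intersection constraint forbids only a pair of colors on two adjacent edges, a valid completion always exists. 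To match the style of the surrounding lemmas I would present this as sub-cases on $\sU_\phi(y_3)$ (and, where needed, $\sU_\phi(y_2)$), exhibiting an explicit triple such as $(\phi(x_1x_2),\phi(x_2x_3),\phi(x_3x_4))=(5,3,4)$ in the tightest configurations. The hard part is exactly this bookkeeping: verifying the handful of extremal configurations in which $\sU_\phi(y_2)$ and $\sU_\phi(y_3)$ each contain two of $\{3,4,5\}$ and $1\in\sU_\phi(y_4)$ pins $x_3x_4$, and confirming that none of them simultaneously blocks all twelve colorings.

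Finally, for the symmetric statement $1\in\sU_\phi(y_4)$ I would invoke the reflection $x_i\leftrightarrow x_{5-i}$ (so $x_1\leftrightarrow x_4$, $x_2\leftrightarrow x_3$, $x_0\leftrightarrow x_5$) combined with the color transposition $1\leftrightarrow2$. This fixes $\sigma$ (it sends $\sigma(x_0x_1)=2$ to $\sigma(x_4x_5)=1$ and fixes $\sigma(x_5x_0)=5$) and fixes the spoke colors $\phi(x_1y_1)=\phi(x_2y_2)=1$, $\phi(x_3y_3)=\phi(x_4y_4)=2$; it turns the hypothesis $1\notin\sU_\phi(y_4)$ into $2\notin\sU_\phi(y_1)$, and it carries the completion above to a completion of $G$. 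Crucially, that completion never uses the values $\alpha$ and $\beta$: they only affect the already-colored edges at $x_0$ and $x_5$, whose intersection constraints with $x_1$ and $x_4$ hold automatically because $\sU(x_0)$ and $\sU(x_5)$ are fixed and meet $\{1,2,\phi(x_1x_2)\}$ and $\{1,2,\phi(x_3x_4)\}$ in at most two colors. Hence the reflected argument yields the same contradiction and forces $1\in\sU_\phi(y_4)$, proving the claim.
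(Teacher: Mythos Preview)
Your proposal is correct and follows essentially the same approach as the paper: both assume $2\notin\sU_\sigma(y_1)$, observe that this makes the intersection constraint at $x_1$ automatic, and then complete by properly coloring the path $x_1x_2,x_2x_3,x_3x_4$ from $\{3,4,5\}$ subject to the constraints at $y_2,y_3,y_4$; the symmetry argument via the reflection $x_i\leftrightarrow x_{5-i}$ together with the color swap $1\leftrightarrow 2$ is also what the paper implicitly invokes, and your observation that $\alpha,\beta$ play no role in the completion is exactly why the symmetry goes through even though $\alpha$ and $\beta$ need not be swapped.

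The one place where the paper is sharper is in the execution of the completion. Rather than splitting on $\sU_\phi(y_3)$ and $\sU_\phi(y_4)$ as you propose, the paper first fixes $\gamma_1\in\{3,4,5\}\setminus\sU_\phi(y_2)$ (which exists since $1\in\sU_\phi(y_2)$) and puts it on $x_1x_2$, thereby killing the $x_2$-constraint outright; then a single dichotomy on whether $\sU_\phi(y_3)=\{2,\gamma_2,\gamma_3\}$ finishes the argument in two lines, using the ``with respect to $1$ and $\sU_\phi(y_4)$'' device to handle the $x_4$-constraint. This avoids the exhaustive bookkeeping you anticipate in the ``hard part.''
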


\begin{proof}
Suppose $2 \notin \sU_\sigma(y_1)$.  Color $x_1x_2$ with a color from $\{3,4,5\}\setminus\sU_\sigma(y_2)$.  Without loss of generaltiy, suppose it is $\gamma_1$.  If $\sU_\phi(y_3) \neq \{2,\gamma_2, \gamma_3\}$, color $x_3x_4$ (and $x_2x_3$) from $\{\gamma_2,\gamma_3\}$ with respect to 1 and $\sU_\phi(y_4)$.  This yields a good coloring of $G$.  

So $\sU_\sigma(y_3) = \{2,\gamma_2, \gamma_3\}$.  We then color $x_1x_2$ and $x_2x_3$ with $\gamma_2$ and $\gamma_1$, respectively, and color $x_3x_4$ from $\{\gamma_2, \gamma_3\}$ with respect to 1 and $\sU_\phi(y_4)$.  This yields a good coloring of $G$ and proves the claim.
\end{proof}

\begin{claim}
$\sU_\sigma(y_1) \cup \sU_\sigma(y_2) = \{1,2,3,4,5\}$, and by symmetry $\sU_\phi(y_3) \cup \sU_\phi(y_4) = \{1,2,3,4,5\}$.
\end{claim}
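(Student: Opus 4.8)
The plan is to prove the equivalent statement that $\sU_\sigma(y_1) \cap \sU_\sigma(y_2) = \{1\}$. Since $y_1$ and $y_2$ are $3$-vertices and $x_1y_1, x_2y_2$ are both colored $1$, both sets contain $1$ and have size three; moreover $y_1y_2 \in E(G')$ was properly colored (with color $1$), so the colors inherited from $G'$ give $|\sU_\sigma(y_1) \cap \sU_\sigma(y_2)| \le 2$. Hence the union is all of $\{1,2,3,4,5\}$ exactly when the two sets meet only in $1$. I would therefore assume for contradiction that some color $d$ is absent from $\sU_\sigma(y_1) \cup \sU_\sigma(y_2)$; since the union already contains $\{1,2\}$ (using $2 \in \sU_\sigma(y_1)$ from the previous claim), necessarily $d \in \{3,4,5\}$, and $d$ is the unique missing color.

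First I would run the obvious extension: color $x_1x_2$ with $d$, then color $x_3x_4$ (and $x_2x_3$) from $\{3,4,5\}\setminus\{d\}$ with respect to $1$ and $\sU_\sigma(y_4)$. Choosing $x_1x_2 = d$ keeps both $\sU_\sigma(x_1)$ and $\sU_\sigma(x_2)$ from meeting $\sU_\sigma(y_1)$ or $\sU_\sigma(y_2)$ in three colors (the fixed edges $x_0x_1 = 2$ and $x_1y_1 = 1$ make $x_1$ the only real threat, and $d \notin \sU_\sigma(y_1)$ removes it), while the ``with respect to $1$ and $\sU_\sigma(y_4)$'' device steers $x_3x_4$ out of $\sU_\sigma(y_4) = \{1,2,h\}$; this is always possible since the two-element set $\{3,4,5\}\setminus\{d\}$ meets $\{1,2,h\}$ in at most the single color $h$. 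The remaining intersections---at $x_0$ (palette $\{2,3,5\}$), at $x_5$ (palette $\{1,\beta,5\}$), and between consecutive cycle vertices---are routine and depend only mildly on $\beta \in \{3,4\}$. The sole obstruction is the pair $x_3, y_3$: writing $\{\gamma_1,\gamma_2,\gamma_3\} = \{3,4,5\}$ with $d = \gamma_1$, the extension fails only when $\sU_\sigma(y_3) = \{2,\gamma_2,\gamma_3\}$, which I may therefore assume.

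In that residual configuration $d$ is now also absent from $\sU_\sigma(y_3)$, which frees a second assignment moving $d$ onto $x_2x_3$ or $x_3x_4$ and placing $x_1x_2$ on whichever color of $\{3,4,5\}\setminus\{d\}$ avoids the third color of $\sU_\sigma(y_1) = \{1,2,*\}$. Chasing the now-pinned sets $\sU_\sigma(y_1)$, $\sU_\sigma(y_3) = \{2,\gamma_2,\gamma_3\}$, and $\sU_\sigma(y_4) = \{1,2,h\}$ through the intersection conditions should finish in all but one extreme subcase. For that last subcase I expect to fall back on reconsidering the good partial coloring $\psi$, recoloring the entire $9$-cycle $x_0x_1\dots x_8$ together with $x_4x_0$ by an explicit color sequence, exactly as in the already-treated subcases, checking goodness against the $\beta$-dependent palettes $\sU_\phi(x_6) = \{1,2,\beta\}$ and $\sU_\phi(x_8) = \{1,2,3\}$. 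The second statement, $\sU_\phi(y_3) \cup \sU_\phi(y_4) = \{1,2,3,4,5\}$, then follows from the reflection $x_i \mapsto x_{5-i}$ composed with the color permutation $(1\,2)$ (or $(1\,2)(3\,4)$ when $\beta = 4$), which is a symmetry of $\sigma$ carrying the first statement to the second.

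The main obstacle I anticipate is precisely this residual configuration: once $\sU_\sigma(y_3)$ is forced to be $\{2,\gamma_2,\gamma_3\}$, the three path-edges $x_1x_2, x_2x_3, x_3x_4$ must be colored simultaneously against the four constraining sets $\sU_\sigma(y_1), \sU_\sigma(y_2), \sU_\sigma(y_3), \sU_\sigma(y_4)$ together with the frozen palettes $\{2,3,5\}$ at $x_0$ and $\{1,\beta,5\}$ at $x_5$, and the naive permutations of $\{3,4,5\}$ no longer all succeed. Isolating the genuinely stuck subcase and clearing it by recoloring the whole cycle through $\psi$ is where the real work lies.
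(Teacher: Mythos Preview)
Your approach is essentially the paper's: assume some $\gamma_1 \in \{3,4,5\}$ is missing from $\sU_\sigma(y_1)\cup\sU_\sigma(y_2)$, try $x_1x_2=\gamma_1$ and color $x_3x_4$ (and $x_2x_3$) from $\{\gamma_2,\gamma_3\}$ with respect to $1$ and $\sU_\sigma(y_4)$, and note the only obstruction is $\sU_\sigma(y_3)=\{2,\gamma_2,\gamma_3\}$. So far this is verbatim the paper's argument.

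Where you diverge is in the residual case. You anticipate a lingering ``extreme subcase'' that may force a fallback to $\psi$ and a full recoloring of the $9$-cycle. In fact no such fallback is needed: once $\sU_\sigma(y_3)=\{2,\gamma_2,\gamma_3\}$, simply put $\gamma_1$ on $x_2x_3$, then \emph{independently} choose $x_1x_2$ from $\{\gamma_2,\gamma_3\}$ with respect to $2$ and $\sU_\sigma(y_1)$, and $x_3x_4$ from $\{\gamma_2,\gamma_3\}$ with respect to $1$ and $\sU_\sigma(y_4)$. This works outright. The key point you may be missing is that these two choices need not be coordinated: the only vertices where $x_1x_2$ and $x_3x_4$ interact are $x_2$ and $x_3$, and at both the palette contains $\gamma_1$, so the intersection with the neighbor's palette is at most $2$ regardless of whether $x_1x_2=x_3x_4$ or not. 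There is no case analysis left and no need to touch $\psi$.

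Your symmetry remark for the second statement is more elaborate than necessary; the paper just invokes the evident reflection, as you do.
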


\begin{proof}
Without loss of generality, suppose $\gamma_1 \notin \sU_\sigma(y_1) \cup \sigma(y_2)$.  If $\sU_\sigma(y_3) \neq \{2,\gamma_2, \gamma_3\}$, color $x_1x_2$ with $\gamma_1$ and color $x_3x_4$ (and $x_2x_3$) from $\{\gamma_2, \gamma_3\}$ with respect to 1 and $\sU_\phi(y_4)$.  This yields a good coloring of $G$.

So $\sU_\phi(y_3) = \{2, \gamma_2, \gamma_3\}$.  We then color $x_2x_3$ with $\gamma_1$, color $x_1x_2$ from $\{\gamma_2, \gamma_3\}$ with respect to 2 and $\sU_\phi(y_1)$, and color $x_3x_4$ from $\{\gamma_2, \gamma_3\}$ with respect to 1 and $\sU_\phi(y_4)$.  This yields a good coloring of $G$ and proves the claim.
\end{proof}

Without loss of generality, assume $\sU_\sigma(y_1) = \{1,2,\gamma_1\}$ and $\sU_\sigma(y_2) = \{1,\gamma_2, \gamma_3\}$.  Suppose $\gamma_2 \notin \sU_\phi(y_4)$.  We then color $x_3x_4$ with $\gamma_2$ and color $x_2x_3$ (and $x_1x_2$) from $\{\gamma_1, \gamma_3\}$ with respect to $\gamma_2$ and $\sU_\phi(y_3)$.  This yields a good coloring of $G$.

So $\sU_\phi(y_4) = \{1,2,\gamma_2\}$, however a similar argument holds if $\gamma_3 \notin \sU_\phi(y_4)$.  This proves the subcase.

\begin{subcase}
$\sU_\phi(x_8)\setminus\{\alpha\} = \{1,5\}$.
\end{subcase}

\begin{subsubcase}
$\alpha = \beta = 2$.
\end{subsubcase}

\begin{claim}
$\sU_\phi(y_1) = \{1,3,5\}$.
\end{claim}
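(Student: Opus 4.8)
The plan is to argue by contradiction. Assuming $\sU_\phi(y_1)\neq\{1,3,5\}$, I will color the six still-uncolored edges $x_0x_1,x_1x_2,x_2x_3,x_3x_4,x_4x_5,x_5x_0$ so that $\phi$ extends to a good coloring of all of $G$, contradicting the minimality of $G$; this forces $\sU_\phi(y_1)=\{1,3,5\}$. First I would collect the data fixed by this subsubcase: $\phi(x_1y_1)=\phi(x_2y_2)=1$, $\phi(x_3y_3)=\phi(x_4y_4)=2$, and $\alpha=\beta=2$ with $\sU_\phi(x_8)\setminus\{\alpha\}=\{1,5\}$, so that $\sU_\phi(x_6)=\sU_\phi(x_8)=\{1,2,5\}$. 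I would also recall that $y_1y_2$ and $y_3y_4$ are edges of $G'$ carrying colors $1$ and $2$, whence $\sU_\phi(y_1)\neq\sU_\phi(y_2)$ and $\sU_\phi(y_3)\neq\sU_\phi(y_4)$; these inequalities are the leverage used to pin down neighboring sets.

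Next I would make the requirement ``good'' explicit on the hexagon. The two tight vertices are $x_0$ and $x_5$: since $\sU_\phi(x_8)=\sU_\phi(x_6)=\{1,2,5\}$ and both $x_8x_0,x_5x_6$ already carry $2$, the two cycle-edges at $x_0$ (respectively $x_5$) must not receive the pair $\{1,5\}$, else $|\sU_\phi(x_0)\cap\sU_\phi(x_8)|=3$. At each of $x_1,x_2,x_3,x_4$ the two incident cycle-edges may contain at most one color of $\sU_\phi(y_i)$ beyond the color already on $x_iy_i$. A convenient opening move is to color the shared edge $x_5x_0$ with $3$ (or $4$): since neither $3$ nor $4$ lies in $\{1,5\}$, this discharges both tight constraints at once, and one checks it then leaves $\phi(x_0x_1)\in\{4,5\}$ and $\phi(x_4x_5)\in\{1,4,5\}$ while the three middle edges remain flexible.

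I would then color the path $x_0x_1,\dots,x_4x_5$, applying at $x_2,x_3,x_4$ the device ``color $\dots$ from $\{\cdot,\cdot\}$ with respect to $\gamma$ and $\sU_\phi(\cdot)$'' so that the last edge at each such vertex meets its $y_i$-constraint automatically (and invoking Lemma \ref{extend} wherever an incident set is small), while checking the remaining mild properness and intersection conditions between consecutive hexagon vertices, in particular $\phi(x_0x_1)\neq\phi(x_4x_5)$. The branching is on $\sU_\phi(y_1)\setminus\{1\}$, which ranges over the $2$-subsets of $\{2,3,4,5\}$ other than $\{3,5\}$ (sets of smaller size being easier); in the branches where a single explicit coloring does not close everything, I would, exactly as in the neighboring subcases, add an auxiliary hypothesis on $\sU_\phi(y_2)$ or $\sU_\phi(y_3)$, defeat it with a good coloring, and so pin those sets down until a coloring is forced.

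The main obstacle is that colors $1,2,5$ are globally scarce: they exhaust $\sU_\phi(x_6)$ and $\sU_\phi(x_8)$, so around $x_0$ and $x_5$ there is almost no slack, and $5$ in particular must be spent carefully because it is simultaneously dangerous at $x_0,x_5$ and, when $\sU_\phi(y_1)=\{1,3,5\}$, at $x_1$. The delicate part is keeping the four $y_i$-constraints mutually consistent while respecting this scarcity, and the residual configuration $\sU_\phi(y_1)=\{1,3,5\}$ is precisely the one that no direct hexagon-coloring defeats, which is why it survives as the content of the claim and is instead eliminated afterwards by reconsidering the coloring $\psi$. The ``with respect to'' device is what keeps the simultaneous bookkeeping tractable.
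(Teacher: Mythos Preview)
Your setup is accurate: the data $\phi(x_1y_1)=\phi(x_2y_2)=1$, $\phi(x_3y_3)=\phi(x_4y_4)=2$, $\sU_\phi(x_6)=\sU_\phi(x_8)=\{1,2,5\}$ is correct, as are the constraints you derive at $x_0$ and $x_5$. However, the proposal remains a plan rather than a proof, and the plan diverges from the paper's in a way that makes it much harder to carry out.

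The paper does \emph{not} branch on $\sU_\phi(y_1)\setminus\{1\}$. Instead it uses the single assumption $\sU_\phi(y_1)\ne\{1,3,5\}$ uniformly: this is exactly what allows the pair $\{3,5\}$ to land on the two cycle-edges at $x_1$ (so $\sU_\phi(x_1)=\{1,3,5\}$ is harmless), and the paper exploits this repeatedly via the device ``color $x_1x_2$ (and $x_0x_1$) from $\{3,5\}$''. All the branching is then pushed onto $y_2,y_3,y_4$: first $4\in\sU_\phi(y_3)$ and $1\in\sU_\phi(y_4)$ are forced, then $4,3\in\sU_\phi(y_2)$ pin down $\sU_\phi(y_2)=\{1,3,4\}$, and finally $\sU_\phi(y_4)$ is determined and an explicit coloring finishes. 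Your proposed branching on the five possible values of $\sU_\phi(y_1)\setminus\{1\}$ discards the one clean consequence of the hypothesis and forces you to redo the $y_2,y_3,y_4$ analysis inside each branch; you give no colorings, so there is no way to check the plan terminates.

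One point is simply wrong: you assert that the residual configuration $\sU_\phi(y_1)=\{1,3,5\}$ ``is instead eliminated afterwards by reconsidering the coloring $\psi$.'' In this subsubcase the paper never returns to $\psi$; after the claim it derives $\sU_\phi(y_3)=\{2,4,5\}$ and finishes with an explicit hexagon coloring. The $\psi$-recoloring technique is used elsewhere (in Lemma~\ref{No5-5face}), not here.
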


\begin{proof}
Suppose $\sU_\phi(y_1) \neq \{1,3,5\}$.  If $4 \notin \sU_\phi(y_3)$, color $x_2x_3, x_4x_5, x_5x_0$ with 4, 1, 4, respectively, color $x_1x_2$ (and $x_0x_1$) from $\{3,5\}$ with respect to 4 and $\sU_\phi(y_2)$, and color $x_3x_4$ from $\{3,5\}$ with respect to 1 and $\sU_\phi(y_4)$.  This yields a good coloring of $G$.

So $4 \in \sU_\phi(y_3)$, and by a similar argument $1 \in \sU_\phi(y_4)$.  Suppose $4 \notin \sU_\phi(y_2)$.  Color both $x_2x_3$ and $x_5x_0$ with 4.  We then color $x_3x_4, x_0x_1$ (and $x_1x_2, x_4x_5$) from $\{3,5\}$ with respect to 4 and $\sU_\phi(y_3)$.  This yields a good coloring of $G$.

So $4 \in \sU_\phi(y_2)$.  Suppose $3 \notin \sU_\phi(y_2)$.  Color $x_0x_1, x_2x_3, x_3x_4, x_4x_5, x_5x_0$ with 5, 3, 5, 4, 3, respectively, and color $x_1x_2$ from $\{2,4\}$ with respect to 5 and $\sU_\phi(y_1)$.  This yields a good coloring of $G$.  

Thus, $\sU_\phi(y_2) = \{1,3,4\}$, and by the existence of $y_1y_2$ in $G'$, $\sU_\phi(y_1) \neq \{1,3,4\}$.   Now $\sU_\phi(y_4) = \{1,2,3\}$, otherwise color the cycle in order with 3, 4, 5, 3, 1, 4.  We then color the cycle in order with 3, 4, 5, 1, 4, 5.  These are good colorings of $G$ and proves the claim.
\end{proof}

By the existence of $y_1y_2$ in $G'$, $\sU_\phi(y_2) \neq \{1,3,5\}$.  Suppose $\sU_\phi(y_3) \neq \{2,4,5\}$.  Color $x_0x_1, x_1x_2, x_2x_3, x_3x_4$ with 4, 3, 5, 4, respectively.  Then color $x_4x_5$ (and $x_5x_0$) from $\{1,3\}$ with respect to 4 and $\sU_\phi(y_4)$.  This yields a good coloring of $G$.

So $\sU_\phi(y_3) = \{2,4,5\}$.  By the existence of $y_3y_4$ in $G'$, $\sU_\phi(y_4) \neq \{2,4,5\}$.  So color the cycle in order with 4, 5, 3, 4, 5, 3.  This is a good coloring of $G$.

\begin{subsubcase}
$\alpha = \beta = 3$.
\end{subsubcase}

Suppose $2 \notin \sU_\phi(y_1)$.  Also, suppose $\sU_\phi(y_4) \neq \{1,2,5\}$.  Color $x_0x_1, x_3x_4, x_4x_5, x_5x_0$ with 2, 5, 1, 4, respectively.  Then color $x_2x_3$ (and $x_1x_2$) from $\{3,4\}$ with respect to 5 and $\sU_\phi(y_3)$.  This yields a good coloring of $G$.  So $\sU_\phi(y_4) = \{1,2,5\}$.  However, a similar argument holds if $\sU_\phi(y_4) \neq \{1,2,4\}$, a contradiction.

Thus, $2 \in \sU_\phi(y_1)$, and by symmetry $1 \in \sU_\phi(y_4)$.  Now suppose $3 \notin \sU_\phi(y_3)$.  Color $x_2x_3$ and $x_5x_0$ with 3 and 2, respectively.  Then color $x_1x_2, x_4x_5$ (and $x_0x_1, x_3x_4$) from $\{4,5\}$ with respect to 3 and $\sU_\phi(y_2)$.  This yields a good coloring of $G$.

So $3 \in \sU_\phi(y_3)$, and by symmetry $3 \in \sU_\phi(y_2)$.  Now suppose $\sU_\phi(y_1) \neq \{1,2,4\}$.  Then $\sU_\phi(y_4) = \{1,2,4\}$, otherwise color the cycle in order with 2, 4, 5, 4, 1, 4.  $\sU_\phi(y_3) = \{2,3,5\}$, otherwise color the cycle in order with 2, 4, 5, 3, 4, 5.  $\sU_\phi(y_1) = \{1,2,5\}$, otherwise color the cycle in order with 2, 5, 4, 3, 5, 4.  $\sU_\phi(y_2) = \{1,3,4\}$, otherwise color the cycle in order with 5, 3, 4, 5, 1, 4.  We then color the cycle in order with 4, 5, 3, 4, 5, 2.  These are each good colorings of $G$.

So $\sU_\phi(y_1) = \{1,2,4\}$.  Then $\sU_\phi(y_4) = \{1,2,5\}$, otherwise color the cycle in order with 2, 5, 4, 5, 1, 4.  $\sU_\phi(y_3) = \{2,3,4\}$, otherwise color the cycle in order with 2, 5, 4, 3, 5, 4.  $\sU_\phi(y_2) = \{1,3,5\}$, otherwise color the cycle in order with 4, 3, 5, 4, 1, 2.  We then color the cycle in order with 5, 4, 3, 5, 4, 2.  These are all good colorings of $G$.

\begin{subsubcase}
$(\alpha,\beta) = (3,2)$.
\end{subsubcase}

\begin{claim}
$\sU_\phi(y_1) = \{1,2,5\}$.
\end{claim}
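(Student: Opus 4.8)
The plan is to prove the contrapositive: assuming $\sU_\phi(y_1)\neq\{1,2,5\}$, I will extend $\phi$ to a good coloring of $G$ by properly coloring the six edges of the cycle $x_0x_1x_2x_3x_4x_5$, contradicting the choice of $G$. Recall the data fixed by $\phi$ in this subsubcase: the colored edge at each $x_i$ off the cycle gives $1\in\sU_\phi(x_1)\cap\sU_\phi(x_2)$, $2\in\sU_\phi(x_3)\cap\sU_\phi(x_4)$, $\alpha=3\in\sU_\phi(x_0)$ (via $x_8x_0$) and $\beta=2\in\sU_\phi(x_5)$ (via $x_5x_6$); moreover $\sU_\phi(x_8)=\{1,3,5\}$ and $\sU_\phi(x_6)=\{1,2,5\}$. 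Properness together with these fixed colors forces $x_0x_1\in\{2,4,5\}$, $x_5x_0\in\{1,4,5\}$, $x_2x_3\in\{3,4,5\}$, and $x_3x_4,x_4x_5\neq2$, so nearly all of the freedom lives in the colors $\{1,4,5\}$.

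First I would record the intersection constraints beyond properness. The two boundary constraints $|\sU_\phi(x_0)\cap\sU_\phi(x_8)|\le2$ and $|\sU_\phi(x_5)\cap\sU_\phi(x_6)|\le2$ say that at most one of $x_0x_1,x_5x_0$ and at most one of $x_4x_5,x_5x_0$ may receive a color in $\{1,5\}$; the ``parallel external colour'' pairs give $x_0x_1\neq x_2x_3$, $x_2x_3\neq x_4x_5$, and $x_3x_4\neq x_5x_0$; and each edge $x_iy_i$ (for $i\in\{1,2,3,4\}$) forbids only the single coincidence $\sU_\phi(x_i)=\sU_\phi(y_i)$. The key observation is that colouring $\{x_0x_1,x_1x_2\}$ with $\{2,5\}$ makes $\sU_\phi(x_1)=\{1,2,5\}$, which under the standing assumption $\sU_\phi(y_1)\neq\{1,2,5\}$ automatically satisfies the $y_1$-constraint; this neutralizes $y_1$ and leaves me to dodge only $\sU_\phi(y_2),\sU_\phi(y_3),\sU_\phi(y_4)$.

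Taking $x_0x_1=5$, $x_1x_2=2$, $x_5x_0=4$ (so $\sU_\phi(x_0)=\{3,4,5\}$ meets $\sU_\phi(x_8)$ in $\{3,5\}$ and both boundary constraints hold automatically, since neither $x_5x_0$ nor $x_4x_5$ then lies in $\{1,5\}$ once $x_5x_0=4$), the remaining choices $x_2x_3\in\{3,4\}$, $x_3x_4\in\{3,5\}$ (the adjacency at $x_2$ rules out $x_3x_4=1$), $x_4x_5\in\{1,3,5\}$ generate a short list of candidate colorings, each of which is good unless one of $\sU_\phi(x_2),\sU_\phi(x_3),\sU_\phi(x_4)$ equals the corresponding forbidden set $\sU_\phi(y_2),\sU_\phi(y_3),\sU_\phi(y_4)$. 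I would then run the usual cascade: select the candidate avoiding these sets, and whenever a family of candidates is simultaneously blocked, read off the forced values of $\sU_\phi(y_2),\sU_\phi(y_3),\sU_\phi(y_4)$ and invoke the edges $y_1y_2,y_3y_4$ of $G'$ (which force $|\sU_\phi(y_1)\cap\sU_\phi(y_2)|\le2$ and $|\sU_\phi(y_3)\cap\sU_\phi(y_4)|\le2$) to exclude the coincidence, or reroute colours $1,4,5$ via the alternative pattern $x_0x_1=2$, $x_1x_2=5$ with $x_5x_0\in\{1,5\}$, which again yields $\sU_\phi(x_1)=\{1,2,5\}$. The main obstacle is precisely the tightness created by colour $2$ being unavailable on four of the six edges: all maneuvering room is confined to $\{1,4,5\}$, which are exactly the colours making up $\sU_\phi(x_6)$ and $\sU_\phi(x_8)$, so satisfying both boundary constraints on $x_5x_0$ while steering $\sU_\phi(x_2),\sU_\phi(x_3),\sU_\phi(x_4)$ away from up to three prescribed triples is where the case analysis concentrates; if every direct attempt is blocked I expect to fall back, as in the neighbouring subcases, on reconsidering the good partial colouring $\psi$ and recolouring the whole $9$-cycle together with the chord $x_5x_0$.
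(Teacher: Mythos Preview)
Your central observation is exactly the one the paper uses: once $\{x_0x_1,x_1x_2\}$ are coloured from $\{2,5\}$, we get $\sU_\phi(x_1)=\{1,2,5\}$, and the hypothesis $\sU_\phi(y_1)\neq\{1,2,5\}$ neutralises the $y_1$-constraint. But what you have written is a plan, not a proof. Phrases like ``I would then run the usual cascade'' and ``if every direct attempt is blocked I expect to fall back\dots on reconsidering the good partial colouring $\psi$'' are promissory, and the fallback to $\psi$ is a red herring: in this lemma $\phi$ is always extendable directly (the paper says so explicitly), and the claim at hand is dispatched in three short paragraphs with no appeal to $\psi$.

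The paper's route is also tighter than the one you sketch. Instead of fixing $x_0x_1,x_1x_2,x_5x_0$ and leaving three free edges to analyse, it first sets $x_2x_3=4$, $x_4x_5=1$, $x_5x_0=4$, and then colours $x_1x_2$ (and $x_0x_1$) from $\{2,5\}$ \emph{with respect to $4$ and $\sU_\phi(y_2)$}. This single move handles $y_1$ (by your observation) and $y_2$ simultaneously, leaving only $x_3x_4$ uncoloured. One then colours $x_3x_4$ from $\{3,5\}$ unless both $4\in\sU_\phi(y_3)$ and $1\in\sU_\phi(y_4)$; a second explicit colouring forces $\sU_\phi(y_2)=\{1,2,4\}$, whence $\sU_\phi(y_1)\neq\{1,2,4\}$ by the edge $y_1y_2$ in $G'$; and finally colouring the cycle in order with $4,2,5,3,4,5$ is good. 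The lesson is that which three edges you freeze first matters: freezing $x_2x_3$ lets the ``with respect to'' device absorb the $y_2$-constraint for free, so the residual analysis is one edge, not three.
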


\begin{proof}
Suppose $\sU_\phi(y_1) \neq \{1,2,5\}$.  Color $x_2x_3, x_4x_5, x_5x_0$ with 4, 1, 4, respectively.  Then color $x_1x_2$ (and $x_0x_1$) from $\{2,5\}$ with respect to 4 and $\sU_\phi(y_2)$.  If $4 \notin \sU_\phi(y_3)$, color $x_3x_4$ from $\{3,5\}$ with respect to 1 and $\sU_\phi(y_4)$.  Similarly, if $1 \notin \sU_\phi(y_4)$, color $x_3x_4$ from $\{3,5\}$ with respect to 4 and $\sU_\phi(y_3)$.  These are good colorings of $G$.

So $4 \in \sU_\phi(y_3)$ and $1 \in \sU_\phi(y_4)$.  Suppose $\sU_\phi(y_2) \neq \{1,2,4\}$.  Color $x_0x_1, x_1x_2, x_2x_3, x_5x_0$ with 5, 2, 4, 4, respectively.  Then color $x_3x_4$ (and $x_4x_5$) from $\{3,5\}$ with respect to 4 and $\sU_\phi(y_3)$.  This yields a good coloring of $G$.

So $\sU_\phi(y_2) = \{1,2,4\}$, and by the existence of $y_1y_2$ in $G'$, $\sU_\phi(y_1) \neq \{1,2,4\}$.  We then color the cycle in order with 4, 2, 5, 3, 4, 5.  This is a good coloring of $G$ and proves the claim.
\end{proof}

\begin{claim}
$\sU_\phi(y_2) = \{1,3,4\}$.
\end{claim}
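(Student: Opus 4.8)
The plan is to argue by contradiction: assuming $\sU_\phi(y_2)\neq\{1,3,4\}$, I will extend $\phi$ to the $6$-cycle $x_0x_1x_2x_3x_4x_5$ and obtain a good coloring of $G$, contradicting the choice of $G$. The constraints already in force are $\phi(x_3y_3)=\phi(x_4y_4)=2$, the freshly established $\sU_\phi(y_1)=\{1,2,5\}$, and $(\alpha,\beta)=(3,2)$, so that color $3$ is forbidden at $x_0$ through $x_8x_0$ and color $2$ is forbidden at $x_5$ through $x_5x_6$. Since $\sU_\phi(x_8)=\{1,3,5\}$ and $\sU_\phi(x_6)=\{1,2,5\}$, any extension must additionally satisfy $\sU_\phi(x_0)\neq\{1,3,5\}$ and $\sU_\phi(x_5)\neq\{1,2,5\}$, on top of the usual requirements $\sU_\phi(x_i)\neq\sU_\phi(y_i)$. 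Finally, from the edges $y_1y_2$ and $y_3y_4$ of $G'$ I may invoke $\sU_\phi(y_2)\neq\{1,2,5\}$ and $\sU_\phi(y_3)\neq\sU_\phi(y_4)$.

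First I would try the coloring obtained by coloring the cycle in order with $2,4,3,5,4,1$. A direct check shows it is proper and meets every $2$-intersection inequality along the cycle as well as at the edges $x_8x_0$ and $x_5x_6$; it yields $\sU_\phi(x_2)=\{1,3,4\}$, which differs from $\sU_\phi(y_2)$ by the standing assumption, and $\sU_\phi(x_1)=\{1,2,4\}\neq\{1,2,5\}=\sU_\phi(y_1)$. The only inequalities that can break are $\sU_\phi(x_3)=\{2,3,5\}\neq\sU_\phi(y_3)$ and $\sU_\phi(x_4)=\{2,4,5\}\neq\sU_\phi(y_4)$, so this single coloring already settles the claim unless $\sU_\phi(y_3)=\{2,3,5\}$ or $\sU_\phi(y_4)=\{2,4,5\}$.

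It then remains to dispose of these exceptional configurations. Here I would switch to backup colorings that move color $3$ and color $5$ off the central edges; for instance coloring the cycle in order with $2,3,4,3,5,4$ is again proper and admissible, keeps $\sU_\phi(x_2)=\{1,3,4\}$, and now produces $\sU_\phi(x_3)=\{2,3,4\}$ and $\sU_\phi(x_4)=\{2,3,5\}$, hence succeeds precisely when $\sU_\phi(y_3)\neq\{2,3,4\}$ and $\sU_\phi(y_4)\neq\{2,3,5\}$; in particular it handles the simultaneous case $\sU_\phi(y_3)=\{2,3,5\}$, $\sU_\phi(y_4)=\{2,4,5\}$ left open above. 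The residual mixed configurations I would clear by further perturbing $x_2x_3,x_3x_4,x_4x_5$ and completing the remaining edges with the mechanism ``color $e_1$ (and $e_2$) from $\{\gamma_1,\gamma_2\}$ with respect to $\gamma$ and $\sU_\phi(y)$,'' casing on which colors lie in $\sU_\phi(y_1)$ and $\sU_\phi(y_2)$ and exploiting $\sU_\phi(y_2)\neq\{1,2,5\}$ and $\sU_\phi(y_3)\neq\sU_\phi(y_4)$. The main obstacle I anticipate is exactly this bookkeeping: with color $2$ frozen at $x_3,x_4,x_5$ and color $3$ frozen at $x_0$, the palette on the interior edges is tight, so covering every exceptional pair $(\sU_\phi(y_3),\sU_\phi(y_4))$ requires choosing a small but carefully coordinated family of explicit colorings rather than a single uniform one. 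Once all of these are shown to be good, the assumption $\sU_\phi(y_2)\neq\{1,3,4\}$ is untenable, which proves the claim.
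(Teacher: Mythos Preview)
Your two explicit colorings are checked correctly, and together with the constraint $\sU_\phi(y_3)\neq\sU_\phi(y_4)$ they dispose of every configuration except one: $\sU_\phi(y_3)=\{2,3,4\}$ and $\sU_\phi(y_4)=\{2,4,5\}$. This is the case where the first coloring fails at $x_4$ and the second fails at $x_3$, and it is not ruled out by anything you have in hand. You gesture at ``further perturbing $x_2x_3,x_3x_4,x_4x_5$'' but do not supply a coloring, so as written the proof is incomplete. One that works here, still keeping $\sU_\phi(x_2)=\{1,3,4\}$, is to color the cycle in order with $2,4,3,5,1,4$; with that added your argument is complete.

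The paper organizes the case analysis differently and more economically. Instead of committing to fixed colorings and chasing residual pairs $(\sU_\phi(y_3),\sU_\phi(y_4))$, it first cases on $\sU_\phi(y_4)$: if $\sU_\phi(y_4)\neq\{1,2,5\}$, the single flexible scheme $x_0x_1,x_3x_4,x_4x_5,x_5x_0\mapsto 2,5,1,4$ together with $x_2x_3$ (and $x_1x_2$) from $\{3,4\}$ with respect to $5$ and $\sU_\phi(y_3)$ finishes at once (this in particular covers your leftover pair). Only when $\sU_\phi(y_4)=\{1,2,5\}$ does the paper branch again on $\sU_\phi(y_3)$, and two further short colorings close it out. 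Casing on $\sU_\phi(y_4)$ first buys you a uniform treatment of the $x_3,x_4,x_5$ end and avoids the combinatorial cleanup you anticipated.
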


\begin{proof}
Suppose $\sU_\phi(y_2) \neq \{1,3,4\}$.  Also suppose $\sU_\phi(y_4) \neq \{1,2,5\}$.  Color $x_0x_1, x_3x_4, x_4x_5, x_5x_0$ with 2, 5, 1, 4, respectively, and color $x_2x_3$ (and $x_1x_2$) from $\{3,4\}$ with respect to 5 and $\sU_\phi(y_3)$.  This yields a good coloring of $G$.

So $\sU_\phi(y_4) = \{1,2,5\}$.  Suppose $\sU_\phi(y_3) \neq \{2,3,5\}$.  Then color $x_0x_1, x_2x_3, x_3x_4, x_4x_5, x_5x_0$ with 2, 5, 3, 1, 4, respectively, and color $x_1x_2$ from $\{3,4\}$ with respect to 5 and $\sU_\phi(y_2)$.  This yields a good coloring of $G$.

So $\sU_\phi(y_3) = \{2,3,5\}$.  We color the cycle in order with 2, 3, 4, 3, 5, 4.  This is a good coloring of $G$ and proves the claim.
\end{proof}

Now $\sU_\phi(y_3) = \{1,2,5\}$, otherwise color $x_0x_1, x_1x_2, x_2x_3, x_3x_4, x_5x_0$ with 4, 3, 5, 1, 5, respectively, and color $x_4x_5$ from $\{3,5\}$ with respect to 1 and $\sU_\phi(y_4)$.  This is a good coloring of $G$.  We then color $x_0x_1, x_1x_2, x_2x_3, x_4x_5$ with 4, 5, 3, 4, respectively, and color $x_3x_4$ (and $x_5x_0$) from $\{1,5\}$ with respect to 4 and $\sU_\phi(y_4)$.  This yields a good coloring of $G$.

\begin{subsubcase}
$(\alpha,\beta) = (2,3)$.
\end{subsubcase}

\begin{claim}
$\sU_\phi(y_4) = \{1,2,5\}$.
\end{claim}

\begin{proof}
Suppose $\sU_\phi(y_4) \neq \{1,2,5\}$.  Also suppose  $\sU_\phi(y_2) \neq \{1,3,4\}$.  Color $x_0x_1$ and $x_5x_0$ with 5 and 4, respectively.  Then color $x_1x_2$ (and $x_2x_3$) from $\{3,4\}$ with respect to 5 and $\sU_\phi(y_1)$.  Let $\gamma$ denote the color used on $x_1x_2$.  We then color $x_3x_4$ (and $x_4x_5$) from $\{1,5\}$ with respect to $\gamma$ and $\sU_\phi(y_3)$.  This yields a good coloring of $G$.

So $\sU_\phi(y_2) = \{1,3,4\}$.  By the existence of $y_1y_2$ in $G'$, $\sU_\phi(y_1) \neq \{1,3,4\}$.  Now color $x_0x_1, x_1x_2, x_2x_3, x_4x_5, x_5x_0$ with 3, 4, 5, 1, 4, respectively.  If $5 \notin \sU_\phi(y_3)$, color $x_3x_4$ from $\{3,4\}$ with respect to 1 and $\sU_\phi(y_4)$.  This yields a good coloring of $G$.

So $5 \in \sU_\phi(y_3)$, and by a similar argument $1 \in \sU_\phi(y_4)$.   Now $\sU_\phi(y_1) = \{1,3,5\}$, otherwise color the cycle in order with 3, 5, 4, 3, 5, 4.  We then color the cycle in order with 3, 2, 4, 3, 5, 4.  These are good colorings of $G$ and prove the claim.
\end{proof}

\begin{claim}
$\sU_\phi(y_2) = \{1,4,5\}$.
\end{claim}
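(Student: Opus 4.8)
The plan is to argue by contradiction in the usual way: assume $\sU_\phi(y_2)\neq\{1,4,5\}$ and exhibit a good coloring of the six uncolored edges $x_0x_1,\dots,x_5x_0$, contradicting that $G$ is a counterexample. Recall the data fixed by this subsubcase: $\phi(x_8x_0)=\alpha=2$ and $\phi(x_5x_6)=\beta=3$, together with $\phi(x_iy_i)=1$ for $i\in\{1,2\}$ and $\phi(x_iy_i)=2$ for $i\in\{3,4\}$. Consequently $\sU_\phi(x_8)=\{1,2,5\}$ and $\sU_\phi(x_6)=\{1,3,5\}$, so any extension must keep $\sU_\phi(x_0)\neq\{1,2,5\}$ and $\sU_\phi(x_5)\neq\{1,3,5\}$. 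I will also use the previous claim $\sU_\phi(y_4)=\{1,2,5\}$ (which forbids the colors on $x_3x_4$ and $x_4x_5$ from being exactly $\{1,5\}$) and the facts $\sU_\phi(y_3)\neq\{1,2,5\}$ and $\sU_\phi(y_1)\neq\sU_\phi(y_2)$ coming from the edges $y_3y_4$ and $y_1y_2$ of $G'$.

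The engine of the argument is the hypothesis $\sU_\phi(y_2)\neq\{1,4,5\}$, which lets me place colors from $\{4,5\}$ on both $x_1x_2$ and $x_2x_3$, making $\sU_\phi(x_2)=\{1,4,5\}$ legal. First I would try coloring the cycle in order with $3,4,5,1,4,5$. Then $\sU_\phi(x_3)=\{1,2,5\}$, which differs from $\sU_\phi(y_3)$ for free, and one checks directly that $\sU_\phi(x_0)$ and $\sU_\phi(x_5)$ avoid $\{1,2,5\}$ and $\{1,3,5\}$ and that $\sU_\phi(x_4)=\{1,2,4\}\neq\{1,2,5\}$; the only remaining requirement is $\sU_\phi(x_1)=\{1,3,4\}\neq\sU_\phi(y_1)$. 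So this coloring settles every case with $\sU_\phi(y_1)\neq\{1,3,4\}$. When $\sU_\phi(y_1)=\{1,3,4\}$ I would instead interchange the two colors on $x_1x_2,x_2x_3$ and color the cycle in order with $3,5,4,3,1,4$, so that now $\sU_\phi(x_1)=\{1,3,5\}\neq\{1,3,4\}$; this is good provided $\sU_\phi(y_3)\neq\{2,3,4\}$.

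The main obstacle is the residual subcase $\sU_\phi(y_1)=\{1,3,4\}$ and $\sU_\phi(y_3)=\{2,3,4\}$, which is consistent with all the $G'$-facts above. Here no coloring with $\sU_\phi(x_2)=\{1,4,5\}$ survives: forcing $\{4,5\}$ onto $x_1x_2,x_2x_3$ drives $\{\phi(x_3x_4),\phi(x_4x_5)\}$ to $\{1,5\}$, contradicting $\sU_\phi(y_4)=\{1,2,5\}$. The way around this is to deploy the ``color $\dots$ with respect to $\dots$'' device on the pair $(x_2x_3,x_3x_4)$ rather than on $(x_1x_2,x_0x_1)$: I would color $x_0x_1,x_1x_2$ with $4,2$, then color $x_2x_3$ (and $x_3x_4$) from $\{3,5\}$ with respect to $2$ and $\sU_\phi(y_2)$, and finish with $x_4x_5,x_5x_0$ colored $4,5$. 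Regardless of the device's choice one has $\sU_\phi(x_3)=\{2,3,5\}$, which dodges the pinned set $\sU_\phi(y_3)=\{2,3,4\}$, while $\sU_\phi(x_2)$ ranges over the two distinct sets $\{1,2,3\}$ and $\{1,2,5\}$, so the device selects the one different from $\sU_\phi(y_2)$; a routine check confirms $\sU_\phi(x_1)=\{1,2,4\}\neq\{1,3,4\}$ and that the shared-vertex constraints at $x_0$ and $x_5$ hold. In every branch $\phi$ extends to a good coloring of $G$, which is the desired contradiction, so $\sU_\phi(y_2)=\{1,4,5\}$.
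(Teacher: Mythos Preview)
Your proof is correct and follows essentially the same route as the paper's: both open with the coloring $3,4,5,1,4,5$ to pin $\sU_\phi(y_1)=\{1,3,4\}$, then a close variant to pin $\sU_\phi(y_3)=\{2,3,4\}$, and then finish---your final ``with respect to'' step simply packages the paper's last two explicit colorings ($4,5,3,5,4,5$ and $4,2,3,5,4,5$) into one. One minor note: your motivational remark that ``no coloring with $\sU_\phi(x_2)=\{1,4,5\}$ survives'' in the residual case is actually false (for instance $3,5,4,1,4,5$ works there), but this is only commentary and does not affect the validity of the argument you actually give.
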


\begin{proof}
Suppose $\sU_\phi(y_2) \neq \{1,4,5\}$.  By the existence of $y_3y_4$ in $G'$ and the previous claim, $\sU_\phi(y_3) \neq \{1,2,5\}$.  Then $\sU_\phi(y_1) = \{1,3,4\}$, otherwise color the cycle in order with 3, 4, 5, 1, 4, 5.  $\sU_\phi(y_3) = \{2,3,4\}$, otherwise color the cycle in order with 3, 5, 4, 3, 5, 4.  $\sU_\phi(y_2) = \{1,3,5\}$, otherwise color the cycle in order with 4, 5, 3, 5, 4, 5.  We then color the cycle in order with 4, 2, 3, 5, 4, 5.  These are each good colorings of $G$ and prove the claim.
\end{proof}

Again, by the existence of $y_3y_4$ in $G'$, $\sU_\phi(y_3) \neq \{1,2,5\}$.  So $\sU_\phi(y_1) = \{1,3,4\}$, otherwise color the cycle in order with 4, 3, 5, 1, 4, 5.  Also $\sU_\phi(y_3) = \{2,3,4\}$, otherwise color the cycle in order with 5, 3, 4, 3, 1, 4.  We then color the cycle in order with 4, 5, 3, 5, 4, 1.  These are good colorings of $G$.

\begin{subsubcase}
$(\alpha,\beta) = (3,4)$.
\end{subsubcase}

\begin{claim}
$\sU_\phi(y_4) = \{1,2,5\}$.
\end{claim}

\begin{proof}
Suppose $\sU_\phi(y_4) \neq \{1,2,5\}$.  Also suppose $\sU_\phi(y_1) \neq \{1,4,5\}$.  Color $x_2x_3$ and $x_5x_0$ with 3 and 2, respectively.  Then color $x_1x_2$ (and $x_0x_1$) from $\{4,5\}$ with respect to 3 and $\sU_\phi(y_2)$, and color $x_3x_4$ (and $x_4x_5$) from $\{1,5\}$ with respect to 3 and $\sU_\phi(y_3)$.  This yields a good coloring of $G$.

So  $\sU_\phi(y_1) = \{1,4,5\}$.  Now $\sU_\phi(y_2) = \{1,3,4\}$, otherwise color $x_0x_1, x_1x_2, x_2x_3, x_5x_0$ with 5, 3, 4, 2, respectively, and color $x_3x_4$ (and $x_4x_5$) from $\{1,5\}$ with respect to 4 and $\sU_\phi(y_3)$.  This yields a good coloring of $G$.

Also $\sU_\phi(y_3) = \{2,3,5\}$, otherwise color the cycle in order with 4, 2, 3, 5, 1, 2.  We then color the cycle in order with 5, 2, 4, 3, 5, 2.  These are good colorings of $G$ and prove the claim.
\end{proof}

By the existence of $y_3y_4$ in $G'$, $\sU_\phi(y_3) \neq \{1,2,5\}$.

\begin{claim}
$\sU_\phi(y_3) = \{2,4,5\}$.
\end{claim}

\begin{proof}
Suppose $\sU_\phi(y_3) \neq \{2,4,5\}$.  Color $x_0x_1, x_2x_3, x_3x_4, x_4x_5, x_5x_0$ with 5, 4, 5, 3, 2, respectively.  If $5 \notin \sU_\phi(y_1)$, color $x_1x_2$ from $\{2,3\}$ with respect to 4 and $\sU_\phi(y_2)$.  This yields a good coloring of $G$.

So $5 \in \sU_\phi(y_1)$, and by a similar argument $4 \in \sU_\phi(y_2)$.  We then color the cycle in order with 2, 3, 5, 1, 3, 5.  This is a good coloring of $G$ and proves the claim.
\end{proof}

Now $\sU_\phi(y_2) = \{1,4,5\}$, otherwise color $x_0x_1, x_3x_4, x_4x_5, x_5x_0$ with 2, 1, 3, 5, respectively, and then color $x_1x_2$ (and $x_2x_3$) from $\{4,5\}$ with respect to 2 and $\sU_\phi(y_1)$.  This yields a good coloring of $G$.  We then color 
 $x_0x_1, x_2x_3, x_3x_4, x_4x_5, x_5x_0$ with 5, 4, 3, 5, 2, respectively, and color $x_1x_2$ from $\{2,3\}$ with respect to 5 and $\sU_\phi(y_1)$.  This yields a good coloring of $G$.

Up to relabeling colors and symmetry, this proves the subcase.

\begin{subcase}
$\sU_\phi(x_8)\setminus\{\alpha\} = \{4,5\}$.
\end{subcase}

\begin{subsubcase}
$\alpha = 1$ and $\beta \in \{1,2\}$.
\end{subsubcase}

\begin{claim}
$\sU_\phi(y_4) = \{2,4,5\}$.
\end{claim}

\begin{proof}
Suppose $\sU_\phi(y_4) \neq \{2,4,5\}$.  Color $x_0x_1, x_2x_3, x_5x_0$ with 2, 3, 3, respectively, and color $x_3x_4$ (and $x_4x_5$) from $\{4,5\}$ with respect to 3 and $\sU_\phi(y_3)$.  If $2 \notin \sU_\phi(y_1)$, color $x_1x_2$ from $\{4,5\}$ with respect to 3 and $\sU_\phi(y_2)$.  This yields a good coloring of $G$.

So $2 \in \sU_\phi(y_1)$, and by symmetry $3 \in \sU_\phi(y_2)$.  Suppose $3 \notin \sU_\phi(y_3)$.  Color both $x_2x_3$ and $x_5x_0$ with 3. We then color $x_1x_2, x_4x_5$ (and $x_0x_1, x_3x_4$) from $\{4,5\}$ with respect to 3 and $\sU_\phi(y_2)$.  This yields a good coloring of $G$.

So $3 \in \sU_\phi(y_3)$.  Suppose $1 \notin \sU_\phi(y_4)$ or $\{4,5\} \cap \sU_\phi(y_4)= \emptyset$.  Color $x_0x_1, x_3x_4, x_5x_0$ with 2, 1, 3, respectively.  Then color $x_1x_2, x_4x_5$ (and $x_2x_3$) from $\{4,5\}$ with respect to 2 and $\sU_\phi(y_1)$.  This yields a good coloring of $G$.

So without loss of generality, $\sU_\phi(y_4) = \{1,2,4\}$.  Now $\sU_\phi(y_1) = \{1,2,5\}$, otherwise color the cycle in order with 2, 5, 4, 1, 5, 3.  We then color $x_0x_1, x_1x_2, x_4x_5$ with 2, 3, 3, respectively, and color $x_2x_3, x_5x_0$ (and $x_3x_4$) from $\{4,5\}$ with respect to 3 and $\sU_\phi(y_2)$.   This yields a good coloring of $G$ and proves the claim.
\end{proof}

Recall that by the existence of $y_3y_4$ in $G'$, $\sU_\phi(y_3) \neq \{2,4,5\}$.  Suppose $\sU_\phi(y_2) \neq \{1,4,5\}$.  Color $x_0x_1$ and $x_4x_5$ with 2 and 3, respectively.  Then color $x_1x_2, x_3x_4$ (and $x_2x_3, x_5x_0$) from $\{4,5\}$ with respect to 2 and $\sU_\phi(y_1)$.  This yields a good coloring of $G$.

So $\sU_\phi(y_2) = \{1,4,5\}$, and by the existence of $y_1y_2$ in $G'$, $\sU_\phi(y_1) \neq \{1,4,5\}$.  $\sU_\phi(y_3) = \{1,2,3\}$, otherwise color the cycle in order with 4, 5, 3, 1, 5, 3.    This is a good coloring of $G$.

If $\beta = 2$, color the cycle in order with 4, 5, 3, 4, 1, 3.  If $\beta = 1$, color $x_1x_2, x_4x_5, x_5x_0$ with 3, 3, 2, respectively, and color $x_0x_1, x_3x_4$ (and $x_2x_3$) from $\{4,5\}$ with respect to 3 and $\sU_\phi(y_1)$.  This yields a good coloring of $G$.

\begin{subsubcase}
$(\alpha, \beta) = (2,1)$.
\end{subsubcase}

\begin{claim}
$\sU_\phi(y_1) = \{1,4,5\}$, and by symmetry $\sU_\phi(y_4) = \{2,4,5\}$.
\end{claim}

\begin{proof}
Suppose $\sU_\phi(y_1) \neq \{1,4,5\}$.  Also suppose $\sU_\phi(y_4) \neq \{2,4,5\}$.  Then color both $x_2x_3$ and $x_5x_0$ with 3, color $x_1x_2$ (and $x_0x_1$) from $\{4,5\}$ with respect to 3 and $\sU_\phi(y_2)$, and color $x_3x_4$ (and $x_4x_5$) from $\{4,5\}$ with respect to 3 and $\sU_\phi(y_3)$.  This yields a good coloring of $G$.

So $\sU_\phi(y_4) = \{2,4,5\}$.  Suppose $\sU_\phi(y_2) \neq \{1,4,5\}$.  We then color $x_0x_1, x_4x_5, x_5x_0$ with 3, 4, 3, respectively, and color $x_1x_2, x_3x_4$ (and $x_2x_3$) from $\{4,5\}$ with respect to 3 and $\sU_\phi(y_1)$.  This yields a good coloring of $G$.

So $\sU_\phi(y_2) = \{1,4,5\}$.  Also $\sU_\phi(y_1) = \{1,2,3\}$, otherwise color the cycle in order with 3, 2, 4, 5, 3, 5.  $\sU_\phi(y_3) = \{1,2,3\}$, otherwise color the cycle in order with 4, 5, 3, 1, 4, 3.  We then color the cycle in order with 4, 2, 5, 3, 4, 3.  These are good colorings of $G$ and prove the claim.
\end{proof}

Now color $x_1x_2, x_3x_4, x_5x_0$ with 3, 1, 3, respectively.  If $3 \notin \sU_\phi(y_2)$, color $x_2x_3$ (and $x_0x_1, x_4x_5$) from $\{4,5\}$ with respect to 1 and $\sU_\phi(y_3)$.  This yields a good coloring of $G$.

So $3 \in \sU_\phi(y_2)$, and by a similar argument $1 \in \sU_\phi(y_3)$.  Similar arguments show that $2 \in \sU_\phi(y_2)$ and $3 \in \sU_\phi(y_3)$ by coloring $x_1x_2, x_3x_4, x_5x_0$ with 2, 3, 3, respectively.  

So $\sU_\phi(y_2) = \sU_\phi(y_3) = \{1,2,3\}$.  We then color the cycle in order with 3, 5, 4, 5, 3, 4.  This is a good coloring of $G$.

\begin{subsubcase}
$\alpha = \beta = 3$.
\end{subsubcase}

\begin{claim}
$\sU_\phi(y_1) = \{1,4,5\}$, and by symmetry $\sU_\phi(y_4) = \{2,4,5\}$.
\end{claim}

\begin{proof}
Suppose $\sU_\phi(y_4) \neq \{2,4,5\}$.  Color $x_0x_1, x_2x_3, x_5x_0$ with 2, 3, 1, respectively, and color $x_3x_4$ (and $x_4x_5$) from $\{4,5\}$ with respect to 3 and $\sU_\phi(y_3)$.  If $2 \notin \sU_\phi(y_1)$, color $x_1x_2$ from $\{4,5\}$ with respect to 3 and $\sU_\phi(y_2)$.  This yields a good coloring of $G$.

So $3 \in \sU_\phi(y_2)$ and by a similar argument $2 \in \sU_\phi(y_1)$.  Suppose $3 \notin \sU_\phi(y_3)$.  Color $x_2x_3$ and $x_5x_0$ with 3 and 1, respectively.  Then color $x_1x_2, x_4x_5$ (and $x_0x_1, x_3x_4$) from $\{4,5\}$ with respect to 3 and $\sU_\phi(y_2)$.  This yields a good coloring of $G$.

So $3 \in \sU_\phi(y_3)$.  Suppose $1 \notin \sU_\phi(y_4)$ or $\{4,5\} \cap \sU_\phi(y_4)= \emptyset$.  Color $x_0x_1, x_3x_4, x_5x_0$ with 2, 1, 1, respectively.  Then color $x_1x_2, x_4x_5$ (and $x_2x_3$) from $\{4,5\}$ with respect to 2 and $\sU_\phi(y_1)$.  This yields a good coloring of $G$.

So without loss of generality, $\sU_\phi(y_4) = \{1,2,4\}$.  Then $\sU_\phi(y_1) = \{1,2,5\}$, otherwise color the cycle in order with 2, 5, 4, 1, 5, 1.  $\sU_\phi(y_3) = \{2,3,5\}$, otherwise color the cycle in order with 2, 4, 5, 3, 1, 4.  $\sU_\phi(y_2) = \{1,3,4\}$, otherwise color the cycle in order with 2, 3, 4, 5, 1, 4.  We then color the cycle in order with 4, 5, 3, 4, 5, 1.  These are each good colorings of $G$ and prove the claim.
\end{proof}

Recall that by the existence of $y_1y_2$ and $y_3y_4$ in $G'$, $\sU_\phi(y_3) \neq \{2,4,5\}$ and $\sU_\phi(y_3) \neq \{2,4,5\}$.   Thus, we color the cycle in order with 2, 4, 5, 4, 1, 4.  This is a good coloring of $G$.

\begin{subsubcase}
$\alpha = 3$ and $\beta \in \{1,2\}$.
\end{subsubcase}

Let $\obeta \in \{1,2\}$ such that $\{\beta, \obeta\} = \{1,2\}$.

\begin{claim}
$\sU_\phi(y_4) = \{2,4,5\}$.
\end{claim}

\begin{proof}
Suppose $\sU_\phi(y_4) \neq \{2,4,5\}$.   Also suppose $\sU_\phi(y_1) \neq \{1,4,5\}$.  Color $x_2x_3$ and $x_5x_0$ with 3 and $\obeta$, respectively.  Then color $x_1x_2$ (and $x_0x_1$) from $\{4,5\}$ with respect to 3 and $\sU_\phi(y_2)$, and color $x_3x_4$ (and $x_4x_5$) from $\{4,5\}$ with respect to 3 and $\sU_\phi(y_3)$.  This yields a good coloring of $G$.

So $\sU_\phi(y_1) = \{1,4,5\}$.  Now suppose $\sU_\phi(y_2) \neq \{1,2,3\}$.  Color $x_0x_1, x_1x_2, x_2x_3, x_5x_0$ with 5, 2, 3, $\obeta$, respectively.  Then color $x_3x_4$ (and $x_4x_5$) from $\{4,5\}$ with respect to 3 and $\sU_\phi(y_3)$.  This yields a good coloring of $G$.

So $\sU_\phi(y_2) = \{1,2,3\}$.  Now color $x_0x_1, x_1x_2, x_3x_4, x_5x_0$ with 5, 2, 3, $\obeta$, respectively.  If $3 \notin \sU_\phi(y_3)$, color $x_4x_5$ (and $x_2x_3$) from $\{4,5\}$ with respect to 3 and $\sU_\phi(y_4)$.  This yields a good coloring of $G$.

So $3 \in \sU_\phi(y_3)$, and by a similar argument $3 \in \sU_\phi(y_4)$.  Now color $x_1x_2, x_4x_5, x_5x_0$ with 2, 3, $\obeta$, respectively.  Then color $x_3x_4, x_0x_1$ (and $x_2x_3$) from $\{4,5\}$ with respect to 3 and $\sU_\phi(y_4)$.  This yields a good coloring of $G$ and proves the claim.
\end{proof}

Recall that by the existence of $y_3y_4$ in $G'$, $\sU_\phi(y_3) \neq \{2,4,5\}$.   Color $x_1x_2, x_4x_5,x_5x_0$ with 2,3, $\obeta$, respectively.  If $2 \notin \sU_\phi(y_1)$, color $x_2x_3$ (and $x_0x_1, x_3x_4$) from $\{4,5\}$ with respect to 2 and $\sU_\phi(y_2)$.  This yields a good coloring of $G$.

So $2 \in \sU_\phi(y_1)$, and by a similar argument $2 \in \sU_\phi(y_2)$.  We then color $x_0x_1, x_2x_3, x_3x_4, x_5x_0$ with 2, 4, 5, 4, respectively, and color $x_1x_2$ from $\{3,5\}$ with respect to $\sU_\phi(y_1)$.  If $\beta = 1$, color $x_4x_5$ with 3.  If $\beta = 2$, color $x_4x_5$ with 1.  In either case we obtain a good coloring of $G$.

Up to symmetry and permuting colors, this completes the subcase, and so completes the proof of Case \ref{(5,6).2}.  As we have exhausted all cases, the lemma holds.
\end{proof}

\end{document}